\documentclass[12pt]{amsart}
\usepackage[margin=1in]{geometry}
\usepackage{amssymb,amsfonts,amsmath,mathtools}
\usepackage{color}
\usepackage{soul}

\usepackage{enumerate}
\usepackage{mathrsfs}
\usepackage[breaklinks=true,colorlinks=true,linkcolor=blue,citecolor=blue,filecolor=blue,urlcolor=blue]{hyperref}
\hypersetup{linktocpage}
\usepackage[hiresbb]{graphicx,xcolor}
\usepackage{subcaption}
\usepackage[capitalise]{cleveref}
\let\C\undefined
\usepackage{constants}
\newconstantfamily{abcon}{symbol=c}
\newconstantfamily{abcon2}{symbol=C}
\usepackage{bbm}
\usepackage{tikz-cd}
\usepackage{tikz}
\usetikzlibrary{shapes.geometric}
\usetikzlibrary {shapes.misc}
\usetikzlibrary{positioning}
\allowdisplaybreaks[4]


\newtheorem{theorem}{Theorem}[section]
\newtheorem{corollary}[theorem]{Corollary}

\newtheorem{lemma}[theorem]{Lemma}

\newtheorem{conjecture}[theorem]{Conjecture}

\newtheorem{question*}{Question}
\newtheorem{problem*}{Problem}

\theoremstyle{definition}

\theoremstyle{remark}
\newtheorem*{remark}{Remark}

\numberwithin{equation}{section}

\crefname{figure}{Figure}{Figures}
\theoremstyle{plain}
\newtheorem*{theorem*}{Theorem}
\crefname{theorems}{Theorem}{Theorems}
\crefname{corollaries}{Corollary}{Corollaries}
\newtheorem*{corollary*}{Corollary}
\crefname{corollaries*}{Corollary}{Corollaries}
\crefname{lemma}{Lemma}{Lemmata}
\crefname{proposition}{Proposition}{Propositions}
\crefname{conjectures}{Conjecture}{Conjectures}
\newtheorem*{conjonjecture*}{Conjecture}
\crefname{conjonjectures*}{Conjecture}{Conjectures}
\crefname{definitions}{Definition}{Definitions}
\crefname{hypotheses}{Hypothesis}{Hypotheses}

\renewcommand{\hat}{\widehat}

\newcommand{\Z}{\mathbb{Z}}
\renewcommand{\C}{\mathbb{C}}
\newcommand{\R}{\mathbb{R}}
\newcommand{\Q}{\mathbb{Q}}

\newcommand{\GL}{\mathrm{GL}}
\newcommand{\PSL}{\mathrm{PSL}}

\newcommand{\SU}{\mathrm{SU}}

\newcommand{\SL}{\mathrm{SL}}

\newcommand{\A}{\mathbb{A}}

\renewcommand{\tilde}{\widetilde}

\renewcommand{\epsilon}{\varepsilon}

\newcommand{\Li}{\mathrm{Li}}

\renewcommand{\Im}{\mathrm{Im}}

\renewcommand{\pmod}[1]{\, (\mathrm{mod} {\, #1})}

\renewcommand{\Re}{\mathrm{Re}}

\def\Res{\mathop{\mathrm{res}}}

\newcommand{\supp}{\mathrm{supp}\,}

\newcommand{\tr}{\mathrm{tr}}

\newcommand{\vol}{\text{\rm vol}}

\makeatletter\newcommand{\tpmod}[1]{{\@displayfalse\pmod{#1}}}

\begin{document}

\title[PGT for the Picard Manifold and Spectral Exponential Sums]
{The Prime Geodesic Theorem for $\PSL_{2}(\Z[i])$ and Spectral Exponential Sums}

\author{Ikuya Kaneko}
\address{The Division of Physics, Mathematics and Astronomy, California Institute of Technology, 1200 E. California Blvd., Pasadena, CA 91125, USA}
\email{ikuyak@icloud.com}
\urladdr{\href{https://sites.google.com/view/ikuyakaneko/}{https://sites.google.com/view/ikuyakaneko/}}
\thanks{The author acknowledges the support of the Masason Foundation.}

\subjclass[2010]{11F72 (primary); 11M36, 11L05 (secondary)}

\keywords{Prime Geodesic Theorem, Picard manifold, second moment, $L$-functions, Selberg trace formula, Kuznetsov formula, Kloosterman sums, spectral exponential sums, subconvexity}

\date{\today}

\dedicatory{}

\begin{abstract}
This work addresses the Prime Geodesic Theorem for the Picard manifold~$\mathcal{M} = \PSL_{2}(\Z[i]) \backslash \mathfrak{h}^{3}$, which asks for the asymptotic evaluation of a counting function for the closed geodesics on $\mathcal{M}$. Let $E_{\Gamma}(X)$ be the error term in the Prime Geodesic Theorem. We~establish that $E_{\Gamma}(X) = O_{\epsilon}(X^{3/2+\epsilon})$ on average as well as many pointwise bounds.~The~second moment bound parallels an analogous result for $\Gamma = \PSL_{2}(\Z)$ due to Balog et al. and our innovation features the delicate analysis of sums of Kloosterman sums with an explicit manipulation~of oscillatory integrals. The proof of the pointwise bounds requires Weyl-strength subconvexity for quadratic Dirichlet $L$-functions over $\Q(i)$. Moreover, an asymptotic formula for a spectral exponential sum in the spectral aspect for a cofinite Kleinian group $\Gamma$ is given. Our numerical experiments visualise in particular that $E_{\Gamma}(X)$ obeys a conjectural bound of size $O_{\epsilon}(X^{1+\epsilon})$.
\end{abstract}

\maketitle
\tableofcontents

\section{Introduction}\label{introduction}

\subsection{Historical Prelude}\label{historical-prelude}
The Prime Geodesic Theorem concerns the asymptotic behaviour as $X \to \infty$ for the number $\pi_{\Gamma}(X)$ of primitive closed geodesics on hyperbolic manifolds.~The most classical case is when $\Gamma \subset \PSL_{2}(\R)$ is a cofinite Fuchsian group acting on the upper~half-plane $\mathfrak{h}^{2}$. In retrospect, this problem was intensively studied in~\cite{Hejhal1976-2,Hejhal1976,Hejhal1983,Huber1961,Huber1961-2,Kuznetsov1978,Venkov1990,Selberg1989}. Let $\Lambda_{\Gamma}(P) = \log N(P_{0})$ if $\{P \}$ is a power of the underlying primitive hyperbolic~class~$\{P_{0} \}$ and $\Lambda_{\Gamma}(P) = 0$ otherwise. Then the summatory functions \`{a} la Chebyshev are defined by
\begin{equation*}
\Theta_{\Gamma}(X) = \sum_{N(P_{0}) \leq X} \log N(P_{0}), \qquad 
\Psi_{\Gamma}(X) = \sum_{N(P) \leq X} \Lambda_{\Gamma}(P),
\end{equation*}
where $N(P)$ stands for the norm of $P$ so that $\log N(P_{0})$ is the length of the primitive closed geodesic $P_{0}$. The Prime Geodesic Theorem are viewed as a geometric analogue of the Prime Number Theorem; whence the norms are sometimes called \textit{pseudoprimes}. Selberg~\cite{Selberg1989} showed via the analysis of his trace formula that
\begin{equation*}
\Psi_{\Gamma}(X) = \sum_{1/2 < s_{j} \leq 1} \frac{X^{s_{j}}}{s_{j}}+E_{\Gamma}(X),
\end{equation*}
where the main term comes from the small eigenvalues of the hyperbolic Laplacian acting on $L^{2}(\Gamma \backslash \mathfrak{h}^{2})$ and $E_{\Gamma}(X)$ is an error term. It is well known that $E_{\Gamma}(X) = O(X^{3/4})$ for~any~cofinite $\Gamma$. This barrier is called the trivial bound. Given an analogue of the Riemann Hypothesis~for the Selberg zeta function apart from a finite number of exceptional zeroes,~it is reasonable to expect that $E_{\Gamma}(X) = O_{\epsilon}(X^{1/2+\epsilon})$ for $\Gamma = \PSL_{2}(\Z)$; see the work of~Kaneko--Koyama~\cite{KanekoKoyama2022} for evidence. This remains an open problem since being posed by Iwaniec~\cite{Iwaniec1984} in 1984 and is out of reach with current technology due to the abundance of eigenvalues.

As a precursor, Iwaniec \cite{Iwaniec1984} showed via the Kuznetsov formula that $E_{\Gamma}(X) = O_{\epsilon}(X^{35/48+\epsilon})$ for $\Gamma = \PSL_{2}(\Z)$. He also remarked in~\cite{Iwaniec1984-2} that the exponent $2/3$ follows from the~Generalised Lindel\"{o}f Hypothesis (GLH) for quadratic Dirichlet $L$-functions. Moreover, if an analogue of GLH for Rankin--Selberg $L$-functions is assumed, then the Weil bound for Kloosterman~sums yields the same exponent. Following the ideas of Iwaniec~\cite{Iwaniec1984}, one can also improve upon his exponent; see~\cite{Cai2002,LuoSarnak1995}. The crucial gist in all of these pursuits was to show a nontrivial~bound for the spectral exponential sum by applying various versions of spectral summation formul{\ae}. On the other hand, Soundararajan--Young~\cite{SoundararajanYoung2013} have subsequently proven the bound
\begin{equation}\label{Soundararajan-Young}
E_{\Gamma}(X) \ll_{\epsilon} X^{\frac{2}{3}+\frac{\vartheta}{6}+\epsilon},
\end{equation}
where $\vartheta$ signifies a subconvex exponent for quadratic Dirichlet $L$-functions. This justifies~the exponent $25/36$ in view of the work of Conrey--Iwaniec~\cite{ConreyIwaniec2000}. The proof of~\eqref{Soundararajan-Young} leverages~the Kuznetsov--Bykovski\u{\i} formula (see~\cite{Bykovskii1994,Kuznetsov1978,SoundararajanYoung2013}), leaving the theory of the Selberg zeta function aside. This result marks the current record amongst many hitherto established.

\subsection{Statement of Results}\label{statement-of-results}
We now proceed to the rigorous statement of our results,~which necessitates a bit of notation. This work unveils an approach to the Prime Geodesic Theorem associated to the Picard manifold $\mathcal{M} = \PSL_{2}(\mathcal{O}) \backslash \mathfrak{h}^{3}$ with $\mathcal{O} = \Z[i]$ and the upper half-plane $\mathfrak{h}^{3} \cong \SL_{2}(\C)/\SU_{2}(\C) \cong \mathrm{Z}(\C) \backslash \GL_{2}(\C)/\mathrm{U}_{2}(\C)$, viewed as a subset of Hamiltonian quaternions with vanishing fourth coordinate. For an explanation in the general setting, let $\Gamma \subset \PSL_{2}(\C)$ be a cofinite Kleinian group and let $\Psi_{\Gamma}(X)$ denote the analogous counting function associated to $\Gamma$, which counts hyperbolic and loxodromic (not necessarily primitive) conjugacy classes~of $\Gamma$. In our scenario, the small eigenvalues $\lambda_{j} = s_{j}(2-s_{j}) = 1+t_{j}^{2} < 1$ provide a finite number of terms that form the main term in $\Psi_{\Gamma}(X)$, namely
\begin{equation*}
E_{\Gamma}(X) = \Psi_{\Gamma}(X)-\sum_{1 < s_{j} \leq 2} \frac{X^{s_{j}}}{s_{j}}.
\end{equation*}
In a major breakthrough, Sarnak~\cite{Sarnak1983} showed that $E_{\Gamma}(X) = O_{\epsilon}(X^{5/3+\epsilon})$ for cofinite Kleinian groups. In the case of the Picard group, the Kuznetsov formula (see~\S\ref{Kuznetsov-formula}) enables one to lower the exponent $5/3$. Balkanova et al.~\cite{BalkanovaChatzakosCherubiniFrolenkovLaaksonen2019} have obtained the exponent $13/8$ via the utilisation~of the classical method of Luo--Sarnak~\cite{LuoSarnak1995}. A heuristic deduction of the expected exponent~$3/2$ was offered by Balkanova--Frolenkov~\cite{BalkanovaFrolenkov2019-2}. Balog et al.~\cite{BalogBiroCherubiniLaaksonen2022} have refined their work, deriving
\begin{equation}\label{Balog-Biro-Cherubini-Laaksonen}
E_{\Gamma}(X) \ll_{\epsilon} X^{\frac{3}{2}+\frac{4\theta}{7}+\epsilon},
\end{equation}
where $\theta$ is a subconvex exponent satisfying
\begin{equation}\label{subconvexity-for-Dirichlet-L-functions}
L \left(\frac{1}{2}+it, \chi_{D} \right) \ll_{\epsilon} (1+|t|)^{A} N(D)^{\theta+\epsilon}
\end{equation}
for all primitive quadratic characters $\chi_{D}$ over $\Q(i)$ with $D$ the discriminant $\in \mathcal{O} \setminus \{0 \}$. In what follows, the letter $\delta$ means that $E_{\Gamma}(X) = O_{\epsilon}(X^{\delta+\epsilon})$. If we insert the convex exponent~$\theta = 1/4$ in the bound~\eqref{Balog-Biro-Cherubini-Laaksonen}, then we obtain $\delta = 23/14$. In order to make $\theta$ smaller, we need to prove subconvex bounds as in the antecedent work in the literature. Michel--Venkatesh~\cite{MichelVenkatesh2010} solved this problem for $L$-functions on $\GL_{1}$ and $\GL_{2}$ over number fields with a subconvex~exponent unspecified. In his tour de force work, Nelson~\cite{Nelson2020} has recently shown spectral reciprocity~for a cubic moment of central values of automorphic $L$-functions on $\GL_{2}$, obtaining Weyl-strength subconvexity that reinforces the work of Conrey--Iwaniec~\cite{ConreyIwaniec2000} and Petrow--Young~\cite{PetrowYoung2020,PetrowYoung2022}.

Koyama~\cite{Koyama2001} improved Sarnak's exponent to $\delta = 11/7$ conditionally on the \textit{mean-Lindel\"{o}f hypothesis} for symmetric square $L$-functions associated to Hecke--Maa{\ss} cusp forms on $\Gamma \backslash \mathfrak{h}^{3}$. Our work seeks an unconditional proof of $\delta = 3/2$ on average in emulation of previous work.
\begin{theorem}\label{second-moment}
Let $\Gamma = \PSL_{2}(\mathcal{O})$ and $0 \leq \eta \leq 1$ be an additional exponent in the bound~\eqref{mean-value-estimate} towards the mean-Lindel\"{o}f hypothesis. For $1 \ll \Delta \leq V^{(3+2\eta)/4}$ and any $\epsilon > 0$, we have that
\begin{equation}\label{main}
\frac{1}{\Delta} \int_{V}^{V+\Delta} |E_{\Gamma}(X)|^{2} dX 
\ll_{\epsilon} V^{4-\frac{2}{5+2\eta}+\epsilon} \Delta^{-\frac{4}{5+2\eta}}.
\end{equation}
If we assume $\theta = 0$ and $V^{3/10} \leq \Delta \leq V^{19/20}$, then we have that
\begin{equation}\label{main-smooth-explicit-formula}
\frac{1}{\Delta} \int_{V}^{V+\Delta} |E_{\Gamma}(X)|^{2} dX 
\ll_{\epsilon} V^{\frac{44}{13}+\epsilon} \Delta^{-\frac{8}{13}}.
\end{equation}
\end{theorem}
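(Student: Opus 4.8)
The plan is to prove Theorem~\ref{second-moment} by inserting an explicit formula for $\Psi_{\Gamma}(X)$ coming from the Selberg/Kuznetsov machinery on $\mathfrak{h}^{3}$, truncated at some spectral parameter $T$, and then exploiting a mean-value (large-sieve type) estimate for the spectral exponential sum. Concretely, one starts from a smoothed version of $\Psi_{\Gamma}(X)$ — convolving with a bump function of width $\approx V/T$ — so that $E_{\Gamma}(X)$ is approximated, up to an admissible error, by a dual sum $\sum_{|t_{j}| \leqslant T} \frac{X^{1+it_{j}}}{1+it_{j}} h(t_{j})$ over the spectrum, plus a Kloosterman/geometric contribution that is controlled by the evaluation of oscillatory integrals alluded to in the abstract. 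The first task is therefore to make this explicit formula rigorous on $\mathcal{M}$: choose the test function in the Kuznetsov/pre-trace formula, identify the identity contribution as the main term $\sum_{1<s_{j}\leqslant 2} X^{s_{j}}/s_{j}$, bound the cuspidal-minus-main contribution by the spectral sum above, and bound the continuous-spectrum (Eisenstein) part trivially since for $\PSL_{2}(\Z[i])$ it contributes a power saving.

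Next, I would square and integrate over $[V, V+\Delta]$. Expanding $|E_{\Gamma}(X)|^{2}$ and integrating the diagonal $t_{j}=t_{k}$ terms produces roughly $V^{2}\sum_{|t_{j}|\leqslant T}|h(t_{j})|^{2}$, which by Weyl's law for $\mathcal{M}$ (density $\sim t^{2}\,dt$) is of size $V^{2}T^{3}$-ish after accounting for the weight, while the off-diagonal terms $t_{j}\neq t_{k}$ oscillate and, integrated against the window of length $\Delta$, are dampened by a factor $\min(\Delta, |t_{j}-t_{k}|^{-1})$ — this is where the condition $\Delta \leqslant V$ and the interplay $V/\Delta$ enter. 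The crucial input is the mean-value estimate labelled~\eqref{mean-value-estimate} in the paper with its parameter $\eta$; this is precisely the second-moment bound for the spectral exponential sum $\sum_{|t_{j}|\leqslant T} X^{it_{j}}$, and feeding it in with the optimal choice of the truncation parameter $T$ (balanced against the tail error $V^{3/4}$-type terms lifted to $\mathfrak{h}^{3}$, which here become the $V^{3}(\log V)^{2}$ piece) yields the exponent $2+4(1+\eta)/(5+2\eta)$ together with the $(V/\Delta)^{4/(5+2\eta)}$ savings. The second display~\eqref{main-smooth-explicit-formula} is then the specialisation $\theta=\eta=0$: with no loss in the mean-value input one has $4(1+\eta)/(5+2\eta)=4/5$ and $4/(5+2\eta)=4/5$, but a sharper, fully explicit smoothed explicit formula (valid in the stated range $V^{3/10}\leqslant\Delta\leqslant V^{19/20}$) allows re-optimising the cutoff and the smoothing width simultaneously, trading the crude $V^{3}$ error for a better balance and producing $36/13$ and $8/13$; one checks $36/13 = 2 + 10/13$ and that the stated $\Delta$-range is exactly what makes both the main term and the error term fit under this bound.

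For the off-diagonal analysis I would split the double sum dyadically in $|t_{j}-t_{k}|$, use the length-$\Delta$ smoothing to gain $\Delta^{-1}$ when $|t_{j}-t_{k}|\gtrsim \Delta^{-1}$ and to lose nothing when it is smaller, and then count pairs $(t_{j},t_{k})$ in short windows via the local Weyl law on $\mathcal{M}$; the resulting bound is absorbed into the diagonal-type contribution after summation, so no separate term survives. One should be slightly careful that the spectral expansion of the \emph{smoothed} $E_{\Gamma}$ differs from that of the sharp-cutoff $E_{\Gamma}$ by the difference of the two counting functions over a window of length $\approx V/T$, which contributes $O((V/T)\cdot(\text{average density of pseudoprimes near }V))=O(V/T\cdot V\log V)$; this must be kept below the target and it is what forces $T$ not too small.

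The main obstacle I expect is twofold. First, the explicit evaluation of the oscillatory integrals in the Kuznetsov geometric side on $\mathfrak{h}^{3}$ — the $K$-Bessel transforms attached to the chosen test function — must be carried out with enough uniformity in $X$, $T$ and the Kloosterman modulus $c$ to make the Kloosterman contribution genuinely lower-order; this is the ``delicate analysis of sums of Kloosterman sums'' flagged in the abstract, and getting the stationary-phase expansions with honest error terms (rather than soft bounds) is the technical heart. Second, the re-optimisation giving~\eqref{main-smooth-explicit-formula} is sensitive: one needs a clean \emph{smoothed} explicit formula with a power-saving remainder, and the admissible $\Delta$-range $V^{3/10}\leqslant\Delta\leqslant V^{19/20}$ comes from requiring that neither the smoothing error $V/\Delta$-type term nor the tail term exceed $V^{18/13}(V/\Delta)^{4/13}$; verifying that these constraints are simultaneously satisfiable precisely on that interval is a bookkeeping step that is easy to get wrong. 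Once the mean-value estimate~\eqref{mean-value-estimate} is granted, the rest is optimisation of $T$ and the smoothing width, which is routine.
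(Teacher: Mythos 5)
There is a genuine gap, and it is conceptual rather than merely a bookkeeping issue. You misidentify what~\eqref{mean-value-estimate} is: it is \emph{not} a second-moment bound for the spectral exponential sum $S(T,X)=\sum_{t_j\leqslant T}X^{it_j}$, but a mean value of Rankin--Selberg $L$-functions $\sum_{t_j\sim T}\frac{t_j}{\sinh\pi t_j}|L(w,u_j\otimes u_j)|\ll |w|^A T^{3+\eta+\epsilon}$, and $\eta$ cannot enter your diagonal/off-diagonal analysis of $\sum_{t_j,t_k}$ in any way. The diagonal/off-diagonal route you propose — local Weyl law plus the damping factor $\min(\Delta,|t_j-t_k|^{-1})$ — gives precisely the bound $\frac{1}{\Delta}\int_V^{V+\Delta}|\sum_j X^{it_j}e^{-t_j/T}|^2\,dX\ll\Delta^{-1}VT^{5+\epsilon}$ (cf.~\eqref{square-mean-integral-2} in the paper), which after partial summation and balancing against the $V^4T^{-2}(\log V)^2$ error in the explicit formula only yields $V^{18/5}\Delta^{-2/5}$, the earlier Balkanova et al.\ bound — strictly weaker than~\eqref{main} (for $\eta=1/2$ compare $V^{11/3}\Delta^{-2/3}$ with $V^{18/5}\Delta^{-2/5}$). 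So your argument as written simply does not reach the claimed exponent, and there is no slot into which $\eta$ can be inserted.

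The missing idea is arithmetic amplification via the Kuznetsov formula. The paper considers the averaged spectral quantity $\frac{1}{N}\sum_{n}h(|n|)\sum_j\alpha_j\hat\varphi(t_j)|\lambda_j(n)|^2$ for a smooth weight $h$ supported on $N(n)\sim N$: shifting a Mellin contour past $s=1$, the $n$-sum extracts the desired $c\sum_j\hat\varphi(t_j)$ plus a residual term involving $L(1/2+i\tau,\sym^2 u_j)$, and this is exactly where $\eta$ enters (Theorem~\ref{smoothing}). On the other side, the Kuznetsov formula converts the same quantity into a sum of Gaussian Kloosterman sums $\mathcal{S}_n(\psi)$, whose second moment over $[V,V+\Delta]$ is shown to be $\preccurlyeq\Delta^{-1}NV^2+T^3$ by a delicate analysis: truncation of the $c$-range, replacement of $\psi$ by a $K_0$-Bessel kernel (Lemmata~\ref{truncated},~\ref{truncated-2}), an oscillatory-integral bound for $\int K_0(X^{1/2}Mz_1)\overline{K_0(X^{1/2}Mz_2)}\,dX$ (Lemma~\ref{Cherubini-Guerreiro-analogue}), and the Hardy--Littlewood--P\'olya inequality for the off-diagonal Kloosterman sum. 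Balancing the two sides in $N$ gives $\Delta^{-1}V^{3/2}T^{5/2+\eta}$ for the second moment of the smoothed $S(T,X)$ (Corollary~\ref{replacement}), and only then does partial summation against Nakasuji's explicit formula produce~\eqref{main}. For~\eqref{main-smooth-explicit-formula} the argument is more than a ``re-optimisation'': one must replace Nakasuji's formula by the smooth explicit formula of Balog et al.\ together with their Bykovski\u{\i}-type theorem (which is where the parameter $\theta$ comes from), and the $\Delta$-range $V^{3/10}\leqslant\Delta\leqslant V^{19/20}$ is forced by the constraints on the smoothing width $Y$ in that argument. In short, your outline conflates the explicit-formula/Selberg-zeta side with the Kuznetsov/Kloosterman side, omits the Rankin--Selberg amplifier that carries $\eta$, and would not produce the claimed exponents.
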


The first assertion~\eqref{main} was announced in the author's report~\cite{Kaneko2020}. On the other hand,~the estimate~\eqref{main-smooth-explicit-formula} is shown via the smooth explicit formula in~\cite{BalogBiroCherubiniLaaksonen2022}, thus resulting in better bounds in a bulk range of $\Delta$. A generalised version of~\eqref{main-smooth-explicit-formula}~is given in Theorem~\ref{second-moment-2} that contains both the parameters $\eta$ and $\theta$. For any cofinite $\Gamma$, Balkanova et al.~\cite{BalkanovaChatzakosCherubiniFrolenkovLaaksonen2019} have shown that the second moment is $O(V^{18/5} \Delta^{-2/5} (\log V)^{2/5})$, which is weaker than Theorem~\ref{second-moment} when $\Gamma = \PSL_{2}(\mathcal{O})$. Their argument relies on the Selberg trace formula, whereas we use the Kuznetsov formula over Gaussian integers instead --- the latter is known to be more powerful for estimations. Direct corollaries of~\eqref{main} include the following.
\begin{corollary}\label{corollary-of-Theorem-1.1}
Keep the notation as above. For $1 \ll \Delta \leq V^{11/12}$ and any $\epsilon > 0$, we~have~that
\begin{equation}\label{3/2-average}
\frac{1}{\Delta} \int_{V}^{V+\Delta} |E_{\Gamma}(X)|^{2} dX 
\ll_{\epsilon} V^{\frac{62}{17}+\epsilon} \Delta^{-\frac{12}{17}}.
\end{equation}
\end{corollary}

Corollary~\ref{corollary-of-Theorem-1.1} improves upon~\cite[Theorem 1.1]{ChatzakosCherubiniLaaksonen2022}. The mean-Lindel\"{o}f hypothesis asserts~that $\eta = 0$, which would yield a better bound. However, we are unable to establish this conjecture with current technology. Balkanova et al.~\cite[Remark 1.4]{BalkanovaChatzakosCherubiniFrolenkovLaaksonen2019} have also verified that the second moment bound in short intervals of the type $O_{\epsilon}(V^{\beta+\epsilon} \Delta^{-\gamma})$ gives the pointwise bound
\begin{equation}\label{mean-to-max}
E_{\Gamma}(X) \ll_{\epsilon} X^{\alpha+\epsilon}, \qquad \alpha = \frac{\beta+\gamma}{2+\gamma}.
\end{equation}
The bound~\eqref{main} with $\eta = 1/2$ yields $\delta = 13/8$, which is in accordance with~\cite[Theorem~1.1]{BalkanovaChatzakosCherubiniFrolenkovLaaksonen2019}. The current record $\eta = 1/3$ due to Balkanova--Frolenkov~\cite{BalkanovaFrolenkov2022} gives the stronger exponent~$\delta = 37/23$. In general, an application of the second moment bound~\eqref{main} to $V^{\beta+\epsilon} \Delta^{-\gamma}$ conduces~to
\begin{equation}\label{restatement}
E_{\Gamma}(X) \ll_{\epsilon} X^{\frac{11+4\eta}{7+2\eta}+\epsilon}.
\end{equation}
This is a restatement of the bound in~\cite[p.5363]{BalkanovaChatzakosCherubiniFrolenkovLaaksonen2019} with a different proof and we obtain~$\delta = 11/7$ conditionally on the mean-Lindel\"{o}f hypothesis. Theorem~\ref{second-moment} therefore involves a refinement of the work of Koyama~\cite{Koyama2001} as well. For more discussions, we refer the reader to~\S\ref{sums-of-Kloosterman-sums}.
\begin{remark}
It would be interesting to delve into second moment bounds in terms of subconvex exponents $\eta$ and $\theta$ in the full range of $\Delta$. At the moment, we only know bounds with a~certain restriction on $\Delta$ as in~\eqref{main-smooth-explicit-formula}. We relegate this moot issue to future work.
\end{remark}

The following pointwise bounds surpass the aforementioned exponent $\delta = 37/23$.
\begin{theorem}\label{main-2}
Keep the notation as above. For any $\epsilon > 0$, we have that
\begin{equation}\label{eta-theta}
E_{\Gamma}(X) \ll_{\epsilon} X^{\frac{3}{2}+\frac{24\theta+2\eta(1+8\theta)-1}{46+20\eta}+\epsilon}.
\end{equation}
In particular, the current records $\theta = 1/6$ and $\eta = 2\theta = 1/3$ give
\begin{equation}\label{best-known}
E_{\Gamma}(X) \ll_{\epsilon} X^{\frac{376}{237}+\epsilon}.
\end{equation}
If we assume $\theta = 0$, then we have that
\begin{equation}\label{bound-under-MLH}
E_{\Gamma}(X) \ll_{\epsilon} X^{\frac{34}{23}+\epsilon}.
\end{equation}
\end{theorem}

This serves as a counterpart of~\cite[Theorem 1.1]{SoundararajanYoung2013} and a generalisation of~\cite[Corollaries~1.4 \& 1.5]{BalogBiroCherubiniLaaksonen2022}. The exponent $\eta = 1/3$ was proven in the recent work of Balkanova--Frolenkov~\cite{BalkanovaFrolenkov2022}. The method of proof of~\eqref{best-known} is inspired by Nelson~\cite{Nelson2020}, and we could make do with Weyl-strength subconvex bounds for quadratic Dirichlet $L$-functions over $\Q(i)$ in view of~\eqref{Balog-Biro-Cherubini-Laaksonen}.~Theorem~\ref{main-2} gives the state-of-the-art unconditional pointwise bound for $E_{\Gamma}(X)$.

For $X \geq 1$, the spectral exponential sum is defined by
\begin{equation}\label{spectral-exponential-sums}
S(T, X) \coloneqq \sum_{t_{j} \leq T} X^{it_{j}}.
\end{equation}
We wander back and forth between second moment bounds for $E_{\Gamma}(X)$ and $S(T, X)$ in short intervals via an explicit formula. The trivial bound becomes $S(T, X) = O(T^{3})$ and we~cannot bound $S(T, X)$ by summing up the terms with absolute values. The next result demonstrates that the spectral exponential sum $S(T, X)$ obeys a conjectural bound in the \textit{spectral aspect}\footnote{This type of asymptotic formula was shown for a cofinite Fuchsian group $\Gamma \subset \PSL_{2}(\R)$ by the author~\cite{Kaneko2022-2}.}.
\begin{theorem}\label{Laaksonen-analogue}
Define
\begin{align*}
\mathcal{S}(T) &\coloneqq \frac{1}{\pi} \arg Z(1+iT),\\
G(T) &\coloneqq \int_{1}^{2+\xi} \log |Z(\sigma+iT)| d\sigma, \qquad \xi = (\log X)^{-1},\\
\widehat{\Lambda_{\Gamma}}(X) &\coloneqq \sum_{N(T) = X} \frac{N(T) \Lambda_{\Gamma}(T)}{m(T)|a(T)-a(T)^{-1}|^{2}},
\end{align*}
where $Z(s)$ is the Selberg zeta function of $\mathcal{M}$. Moreover, we define~$\Lambda_{K}(X) = (1+\chi_{4}(X)) \log p$ when $X$ equals a power of a prime $p$ (where $\chi_{4}$ is the primitive Dirichlet character modulo~$4$) and $\Lambda_{K}(X) = 0$ otherwise. Then we have for fixed $X \geq 1$ that
\begin{equation}\label{Fujii-analogue}
S(T, X) = \frac{\vol(\mathcal{M})}{2\pi^{2} i \log X} X^{iT} T^{2}
 + \frac{\vol(\mathcal{M})}{2\pi(\log X)^{2}} X^{iT} T
 + X^{iT} \mathcal{S}(T)+\frac{T}{2\pi X}(\widehat{\Lambda_{\Gamma}}(X)+\Lambda_{K}(X))+O(G(T)),
\end{equation}
where the implicit constant is independent on $T$.
\end{theorem}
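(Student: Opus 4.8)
The plan is to pick up the spectral parameters $t_{j}$ as residues of the logarithmic derivative of the Selberg zeta function $Z(s)$ of $\mathcal{M}$, whose nontrivial zeros (counted with multiplicity) sit at $s=1+it_{j}$. Since $X^{s-1}=X^{it_{j}}$ at such a zero and $\res_{s=1+it_{j}}Z'/Z$ is the multiplicity, the residue theorem yields, for $T$ at positive distance from every $t_{j}$ (a limiting argument disposes of the remaining $T$),
\begin{equation*}
S(T,X)=\frac{1}{2\pi i}\oint_{\partial R}\frac{Z'}{Z}(s)\,X^{s-1}\,ds+O_{X}(1),
\end{equation*}
with $R$ the rectangle of vertices $1-\xi,\,2+\xi,\,2+\xi+iT,\,1-\xi+iT$, and the $O_{X}(1)$ absorbing the finitely many small eigenvalues together with the residues at the poles of $Z$ coming from the residual spectrum of the cofinite group $\Gamma=\PSL_{2}(\mathcal{O})$. (An equivalent route is Stieltjes summation, $S(T,X)=X^{iT}N(T)-i\log X\int_{0}^{T}X^{it}N(t)\,dt$, fed by the Riemann--von Mangoldt formula $N(t)=\tfrac{\vol(\mathcal{M})}{6\pi^{2}}t^{3}+(\text{lower-order polynomial})+\mathcal{S}(t)+(\text{scattering winding})+O(1)$ for the counting function $N(t)=\#\{t_{j}\le t\}$, itself read off from the Selberg trace formula; there the term $X^{iT}\mathcal{S}(T)$ is literally the fluctuating part of the boundary value $X^{iT}N(T)$.)

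The main terms are extracted edge by edge. On the right edge $\Re s=2+\xi$ the Euler product $Z'/Z(s)=\sum_{Y}c(Y)Y^{-s}$ converges absolutely, $Y$ ranging over pseudoprime powers with $c(Y)$ the geodesic von Mangoldt weights; the sole non-oscillating contribution to $\int_{0}^{T}(X/Y)^{it}\,dt$ is $Y=X$, producing $\tfrac{T}{2\pi X}\widehat{\Lambda_{\Gamma}}(X)$, everything else summing to $O_{X}(1)$. Incorporating the scattering determinant of $\Gamma$, built from the Dedekind zeta $\zeta_{\Q(i)}(s)=\zeta(s)L(s,\chi_{4})$ (either through a completion of $Z$ or through the functional equation on the left edge), supplies in the same manner the companion term $\tfrac{T}{2\pi X}\Lambda_{K}(X)$, the weight $\Lambda_{K}$ being the von Mangoldt function of $\zeta_{\Q(i)}$. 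On the left edge $\Re s=1-\xi$, the functional equation $Z(s)=Z(2-s)\exp(\vol(\mathcal{M})Q(s))\,\phi(s)^{\pm1}$ --- with $Q$ an odd cubic in $s-1$ encoding the identity (Weyl) term of the trace formula --- turns $Z'/Z(1-\xi+it)$ into $-Z'/Z(1+\xi-it)-\vol(\mathcal{M})Q'(1-\xi+it)\mp\phi'/\phi(1-\xi+it)$; the quadratic $Q'$ reproduces the $\mathfrak{h}^{3}$ Plancherel density, and integrating $X^{it}$ against $t^{2}$ and $t$ over $[0,T]$ by parts delivers the two main terms $\tfrac{\vol(\mathcal{M})}{2\pi^{2}i\log X}X^{iT}T^{2}$ and $\tfrac{\vol(\mathcal{M})}{2\pi(\log X)^{2}}X^{iT}T$ (after the smooth part of the scattering winding is folded in and the precise constants are checked). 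The top edge $\Im s=T$, after an integration by parts, contributes from its left endpoint $X^{iT}\cdot\tfrac1\pi\Im\log Z(1+iT)=X^{iT}\mathcal{S}(T)$; the bottom edge is $O_{X}(1)$.

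What then remains is to bundle every leftover --- the remaining $\int_{1}^{2+\xi}$-part of the top edge, i.e.\ $\log Z(\sigma+iT)$ integrated against $X^{\sigma-1}$, together with the reflected term $Z'/Z(1+\xi-it)$ on the left edge --- into $O(G(T))$. The real part is exactly $\int_{1}^{2+\xi}\log|Z(\sigma+iT)|\,d\sigma=G(T)$; the imaginary part is controlled by the Backlund--Littlewood device: $\arg Z(\cdot+iT)$ varies by at most $\pi$ between consecutive sign changes of $\Re Z(\cdot+iT)$ on $[1,2+\xi]$, and the number of these is bounded, via Jensen's formula on a disc of radius $\asymp1$ centred on $\Im s=T$ and covering the segment, by $\ll\int\log|Z|\,|ds|\ll G(T)+O(\log T)$. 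This is precisely why the error is gauged by the integral of $\log|Z|$ over the part $[1,2+\xi]$ of the strip lying to the right of the critical line, where $Z$ obeys the polynomial bounds that the functional equation and the Phragm\'{e}n--Lindel\"{o}f principle provide; reassembling the four edges gives \eqref{Fujii-analogue} with an error $O(G(T))$ uniform in $T$.

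The chief obstacle is the behaviour of $Z'/Z$ on and just left of the critical line $\Re s=1$, where it is neither an absolutely convergent Dirichlet series nor small --- its size being $\asymp T^{2}$ --- so that controlling the reflected left-edge term, cleanly isolating $X^{iT}\mathcal{S}(T)$ from the top edge, and dominating the rest by $G(T)$ all rest on sharp analytic-continuation and convexity estimates for $Z$ throughout the critical strip and on the careful bookkeeping of the continuous spectrum of $\PSL_{2}(\mathcal{O})$ --- the poles of $Z$ from the residual spectrum and the scattering determinant $\phi$ --- which is the distinctively non-cocompact ingredient and the source of the arithmetic term $\Lambda_{K}(X)$. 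Pinning down the two leading constants exactly, and passing from generic $T$ to all $T$, are the remaining routine matters.
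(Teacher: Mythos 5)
Your primary route --- a single residue-theorem contour for $Z'/Z(s)X^{s-1}$ around the rectangle $R = [1-\xi,\,2+\xi]\times[0,T]$ --- is a genuinely different strategy from the paper's. The paper instead takes the refined Weyl law
$N(t) = \frac{\vol(\mathcal{M})}{6\pi^{2}}t^{3}+b_{2}t\log t+b_{3}t+b_{4}+M(t)+\mathcal{S}(t)+O(1/t)$
as the starting point, writes $S(T,X)=\int_{1}^{T}X^{it}\,dN(t)$, and treats the three pieces $Y_{1}$ (polynomial), $Y_{2}$ (winding number $M$), $Y_{3}$ (fluctuation $\mathcal{S}$) separately. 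For $Y_{3}$ it runs a contour for $\log Z$ (not $Z'/Z$) over the rectangle with vertices $1+i,\,2+\xi+i,\,2+\xi+iT,\,1+iT$, i.e.\ \emph{entirely on and to the right} of the critical line $\Re s=1$, which is what produces the $\widehat{\Lambda_{\Gamma}}(X)$ term from the right edge and the $O(G(T))$ error from the top edge. Your parenthetical Stieltjes-summation remark is essentially this route.

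The gap in your contour approach is the placement of the left edge. The Euler product $\frac{Z'}{Z}(s)=\sum_{\{T\}}\widetilde{\Lambda_{\Gamma}}(T)N(T)^{-s}$ converges absolutely only for $\Re s>2$, not $\Re s>1$ (the norm-counting function has order $X^{2}$). After applying the functional equation $s\mapsto 2-s$ on your left edge $\Re s=1-\xi$, the reflected term $Z'/Z(1+\xi-it)$ lives on the line $\Re s=1+\xi$, which is still inside the critical strip $1<\Re s<2$; it is neither an absolutely convergent Dirichlet series nor controllable by the winding-number/Jensen machinery you invoke. Moreover you propose to absorb this term into $O(G(T))$, but $G(T)=\int_{1}^{2+\xi}\log|Z(\sigma+iT)|\,d\sigma$ is a \emph{horizontal} integral at fixed height $T$, whereas the reflected left-edge contribution is a \emph{vertical} integral over $t\in[0,T]$ at fixed abscissa $1+\xi$ --- the two are simply not comparable; a crude bound on the latter is of size $T^{3}$, which swamps the entire asymptotic. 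The classical Landau argument avoids this precisely by taking the left edge far enough to the left (here one would need $\Re s\le-\xi$) so that the reflected term lands in the region of absolute convergence --- but then the rectangle encloses trivial zeros and scattering singularities of $Z$ that you would have to account for explicitly. A related imprecision: your top edge runs from $1-\xi+iT$ to $2+\xi+iT$, so the boundary term after integration by parts is $\log Z(1-\xi+iT)$, not $\log Z(1+iT)$; you are thus not extracting $\mathcal{S}(T)=\frac{1}{\pi}\arg Z(1+iT)$ but a quantity offset by $\int_{1-\xi}^{1}\frac{Z'}{Z}(\sigma+iT)\,d\sigma$, which again is not obviously $O(G(T))$. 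The paper's choice to keep the contour's left boundary exactly on $\Re s=1$ (working with $\log Z$ rather than $Z'/Z$, fed by the already-established Weyl law) sidesteps both difficulties.
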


Theorem~\ref{Laaksonen-analogue} can be interpreted as a three-dimensional counterpart of the work of Fujii~\cite{Fujii1984} and Laaksonen~\cite[Appendix]{PetridisRisager2017}\footnote{An astute reader might ask which result gives asymptotic formul{\ae} of better quality. The work of Fujii~\cite{Fujii1984} entirely covers the work of Laaksonen~\cite{PetridisRisager2017} since both the sine and cosine kernels are handled in the former.} and reinforces the adequacy of the expected bound~$E_{\Gamma}(X) = O_{\epsilon}(X^{3/2+\epsilon})$. The first term in the identity~\eqref{Fujii-analogue} stems from the main term in the Weyl law. Theorem~\ref{Laaksonen-analogue} implies that $S(T, X)$ may be used as an indirect way of detecting pseudoprimes. Furthermore, the spectral exponential sum attains to a \textit{peak} of order $T$ when $X$ is a power of a norm of a primitive hyperbolic or loxodromic element in $\Gamma$, or a power of a prime~number $p \equiv 1 \tpmod 4$. Therefore, there exists a hidden connection between the spectral parameters $t_{j}$ and the length spectrum in $\mathfrak{h}^{3}$.

\subsection{Structure of the Article}\label{structure-of-the-article}
In~\S\ref{automorphic-machinery}, we configure our automorphic toolbox that deserves a quick perusal by the reader who is not familiar with the subject of this article. The~proof~of Theorem~\ref{second-moment} is executed in~\S\ref{proof-of-Theorem-1.1} and our overall scheme differs from that in two dimensions.~In practice, we apply the Kuznetsov formula for $\PSL_{2}(\mathcal{O})$ to reduce the problem to an analysis of sums of Kloosterman sums over Gaussian integers. This is followed by explicit manipulations of certain integrals involving special functions. In~\S\ref{corollaries-and-Theorem-1.3}, we give a proof of Theorem~\ref{main-2} as well~as applications of Theorem~\ref{second-moment}. The proof of Theorem~\ref{main-2} follows the process developed in~\cite{BalogBiroCherubiniLaaksonen2022} except that we keep track of $\eta$. In~\S\ref{conclusions-and-conjectures}, Theorem~\ref{Laaksonen-analogue} is shown and the spectral exponential~sum in both $X$- and spectral aspects is studied. All implements in~\S\ref{conclusions-and-conjectures} are old-fashioned in the sense that a refined version of the Weyl law plays a crucial r\^{o}le. One may catch more~information about the size of $S(T, X)$ in both aspects from our plots of its scaled versions.

\subsection{Notational Conventions}\label{notational-conventions}
All implicit constants throughout the article are allowed to depend on $\epsilon > 0$ that represents an arbitrarily small positive quantity, but not necessarily the same at each occurrence. The Vinogradov symbol $A \ll B$ or the big $O$ notation $A = O(B)$ signifies that $|A| \leq C|B|$ for some constant $C$. We use the standard symbol $\asymp$ to mean both $\ll$ and $\gg$. We define the Kronecker symbol $\delta_{S}$ according as whether the statement $S$ is true or not. For example, $\delta_{a = b}$ equals 1 if $a = b$ and $0$ otherwise. We write $e(x) \coloneqq \exp(2\pi ix)$ for $x \in \C$. Henceforth, we exploit the sign convention $t_{j} > 0$ for spectral parameters along with the notation $\sum_{|t_{j}| \leq T}$ to indicate that the sum is symmetrised by including both $t_{j}$ and $-t_{j}$.

\section{Automorphic Machinery}\label{automorphic-machinery}

\subsection{Underlying Knowledge}\label{underlying-knowledge}
For the basis $\{1, i, j, k \}$ of Hamiltonian quaternions, a point $v \in \mathfrak{h}^{3}$ is represented by $v = z+rj$ with $r > 0$ and $z = x+yi \in \C$; thus $\Im(v) \coloneqq r$, $\Im(z) \coloneqq y$ and $\Re(z) \coloneqq x$. The space $\mathfrak{h}^{3}$ is endowed with the hyperbolic metric $r^{-1} \sqrt{dx^{2}+dy^{2}+dr^{2}}$ and the corresponding volume element $d\mu(v) = r^{-3} dx dy dr$. In the following lines, for brevity, we define $K = \Q(i)$ (whose ring of integers is $\mathcal{O}_{K} = \mathcal{O} = \Z[i]$ so that $\Gamma = \PSL_{2}(\mathcal{O})$ unless otherwise stated). We regard $\mathcal{O}$ as a lattice in $\R^{2}$, which is denoted by $L$ with the fundamental domain $F_{L}$. The group $\PSL_{2}(\C)$ acts on $\mathfrak{h}^{3}$ by the orientation-preserving isometric action
\begin{equation*}
gv = (av+b)(cv+d)^{-1}, \qquad g = \begin{pmatrix} a & b \\ c & d \end{pmatrix} \in \PSL_{2}(\C).
\end{equation*}
Then the quotient $\mathcal{M} = \Gamma \backslash \mathfrak{h}^{3}$ amounts to a three-dimensional arithmetic hyperbolic~manifold called the Picard manifold. The hyperbolic Laplacian on $\mathcal{M}$ is defined by
\begin{equation*}
\Delta \coloneqq 
 - y^{2} \left(\frac{\partial^{2}}{\partial x^{2}}+\frac{\partial^{2}}{\partial y^{2}}+\frac{\partial^{2}}{\partial r^{2}} \right)
 + r \frac{\partial}{\partial r},
\end{equation*}
which possesses a self-adjoint extension on $L^{2}(\mathcal{M})$. The spectrum on $\Delta$ consists of~discrete and continuous spectra. We denote the family of Maa{\ss} cusp forms by $\{u_{j}(v) \colon j = 1, 2, 3, \dots \}$ associated to the eigenvalue $\lambda_{j} = 1+t_{j}^{2}$ with the sign convention $t_{j} > 0$. We shall assume~$u_{j}$ to be chosen so that they are simultaneous eigenfunctions of the ring of Hecke operators and $L^{2}$-normalised. The Fourier expansion of $u_{j}(v)$ reads (see~\cite[Equation (2.20)]{Sarnak1983})
\begin{equation}\label{Fourier-development}
u_{j}(v) = r \sum_{n \in \mathcal{O}^{\ast}} \rho_{j}(n) K_{it_{j}}(2\pi |n|r) e(\Re(n \overline{z})),
\end{equation}
where $K_{\nu}$ denotes the modified Bessel function of the second kind. This is based on the non-degenerate $\R$-bilinear form $(w, z) \mapsto \Re(w \overline{z})$. The Fourier coefficients $\rho_{j}(n)$ are proportional to the Hecke eigenvalues $\lambda_{j}(n)$, namely
\begin{equation}\label{proportionality}
\rho_{j}(n) = \rho_{j}(1) \lambda_{j}(n), \qquad n \in \mathcal{O}^{\ast}.
\end{equation}
The multiplicativity relation becomes
\begin{equation}\label{multiplicativity}
T_{m} T_{n} = \sum_{(d) \mid (m, n)} T_{\frac{mn}{d^{2}}}, \qquad m, n \in \mathcal{O}^{\ast},
\end{equation}
where the sum is taken over non-zero ideals in $\mathcal{O}$ that divide $m$ and $n$, and the Hecke operator $T_{n}$ on functions $f: \Gamma \backslash \mathfrak{h}^{3} \to \C$ is given by (see~\cite{Shimura1994})
\begin{equation*}
(T_{n} f)(v) = \frac{1}{|n|} \sum_{g \in R(1) \backslash R(n)} f(gv)
 = \frac{1}{4|n|} \sum_{ab = n} \sum_{b \tpmod d} f \left(\begin{pmatrix} a & b \\ 0 & d \end{pmatrix} v \right)
\end{equation*}
with
\begin{equation*}
R(n) \coloneqq \left\{\begin{pmatrix} a & b \\ c & d \end{pmatrix} \in \mathrm{M}_{2}(\mathcal{O}): 
ad-bc = n \right\}, \qquad \Gamma = R(1).
\end{equation*}

For a Hecke--Maa{\ss} cusp form $u_{j}$ with its Fourier expansion in~\eqref{Fourier-development}, we define the Rankin--Selberg convolution
\begin{equation}\label{Rankin--Selberg}
L(s, u_{j} \otimes u_{j}) \coloneqq \sum_{n \in \mathcal{O}^{\ast}} \frac{|\rho_{j}(n)|^{2}}{N(n)^{s}}
\end{equation}
with which we associate the symmetric square $L$-function
\begin{equation*}
L(s, \mathrm{sym}^{2} u_{j}) \coloneqq \sum_{n \in \mathcal{O}^{\ast}} \frac{c_{j}(n)}{N(n)^{s}} 
 = \frac{\zeta_{K}(2s)}{\zeta_{K}(s)} \sum_{n \in \mathcal{O}^{\ast}} \frac{|\lambda_{j}(n)|^{2}}{N(n)^{s}}.
\end{equation*}
This is defined in the region of absolute convergence of both the Dirichlet series, where~$c_{j}(n) = \sum_{\ell^{2} k = n} \lambda_{j}(k^{2})$. Given the convention
\begin{equation*}
\rho_{j}(n) = \sqrt{\frac{\sinh(\pi t_{j})}{t_{j}}} \nu_{j}(n), \qquad \nu_{j}(n) = \nu_{j}(1) \lambda_{j}(n),
\end{equation*}
it is convenient to recast $L(s, \mathrm{sym}^{2} u_{j})$ as
\begin{equation*}
L(s, \mathrm{sym}^{2} u_{j}) = \frac{\zeta_{K}(2s)}{\zeta_{K}(s)} 
L(s, u_{j} \otimes u_{j}) \frac{t_{j}}{\sinh(\pi t_{j})} |\nu_{j}(1)|^{-2}.
\end{equation*}
Moreover, there is an integral representation
\begin{equation*}
L(s, u_{j} \otimes u_{j}) = \frac{8\pi^{2s} \Gamma(2s)}{\Gamma(s+it_{j}) \Gamma(s-it_{j}) \Gamma^{2}(s)} 
\int_{\Gamma \backslash \mathfrak{h}^{3}} |u_{j}(v)|^{2} E(v, 2s) dv,
\end{equation*}
where $E(z, s)$ is the Eisenstein series. It follows from properties of $E(z, s)$ that $L(s, u_{j} \otimes u_{j})$ inherits meromorphic continuation and the functional equation under the change $s \leftrightarrow 1-s$. In the half-plane $\Re(s) \geq 1/2$, the Rankin--Selberg $L$-function $L(s, u_{j} \otimes u_{j})$ has only a simple pole at $s = 1$ with residue (see~\cite[Lemma 2.15]{Sarnak1983})
\begin{equation*}
\Res_{s = 1} L(s, u_{j} \otimes u_{j}) = \frac{8\pi \sinh(\pi t_{j})}{t_{j}} \Res_{s = 2} E(v, s)
 = \frac{8\pi \sinh(\pi t_{j}) \vol(F_{L})}{t_{j} \vol(\mathcal{M})},
\end{equation*}
where we use Euler's reflection formula
\begin{equation}\label{functional-equation-of-gamma}
\Gamma(1+it) \Gamma(1-it) = \frac{\pi t}{\sinh(\pi t)}.
\end{equation}
Then the harmonic weights are defined by
\begin{equation}\label{harmonic-weights}
\alpha_{j} \coloneqq \frac{|\rho_{j}(1)|^{2} t_{j}}{\sinh(\pi t_{j})}
 = \frac{\zeta_{K}(2)}{L(1, \mathrm{sym}^{2} u_{j})} \frac{32 \vol(F_{L})}{\pi \vol(\mathcal{M})}
 = \frac{16 \pi}{L(1, \mathrm{sym}^{2} u_{j})}.
\end{equation}
We emphasise that $\vol(\mathcal{M}) = 2\pi^{-2} \zeta_{K}(2)$ with $\zeta_{K}(s)$ the Dedekind zeta function of $K = \Q(i)$. It is well known that $\alpha_{j}$ varies mildly with $u_{j}$ (see~\cite[Proposition~3.1]{Koyama2001}) so that
\begin{equation}\label{mild}
t_{j}^{-\epsilon} \ll_{\epsilon} \alpha_{j} \ll_{\epsilon} t_{j}^{\epsilon}.
\end{equation}
It follows from Rankin--Selberg theory that
\begin{align*}
|\rho_{j}(1)|^{2} &\ll_{\epsilon} |\rho_{j}(1)|^{2} \Res_{s = 1} L(s, u_{j} \otimes \widetilde{u_{j}}) (1+|t_{j}|)^{\epsilon}\\
&\ll_{\epsilon} (1+|t_{j}|)^{\epsilon} \Res_{s = 1} \sum_{n \in \mathcal{O}^{\ast}} \frac{|\rho_{j}(n)|^{2}}{N(n)^{s}} 
\ll |\Gamma(1+it_{j})|^{-2} \ll \frac{\sinh(\pi t_{j})}{t_{j}}
\end{align*}
in view of~\eqref{mild} and the three-dimensional analogue of the technique due to Iwaniec~\cite{Iwaniec1990}.

\subsection{Auxiliary Lemmata}\label{auxiliary-lemmata}
We recall the standard classification of elements $\gamma \in \PSL_{2}(\C)$. Following~\cite[Definition 1.3, p.34]{ElstrodtGrunewaldMennicke1998}, an element $\gamma \ne \pm I$ is categorised as \textit{elliptic}, \textit{hyperbolic}, or \textit{parabolic} element for $|\tr(\gamma)| < 2$, $|\tr(\gamma)| > 2$, or $|\tr(\gamma)| = 2$ (with $\tr(\gamma) \in \R$), respectively. We call $\gamma$ \textit{loxodromic} otherwise, namely if $\tr(\gamma) \notin \R$. Every hyperbolic or loxodromic element $T$ is conjugate in $\PSL_{2}(\C)$ to a unique element
\begin{equation*}
D(T) = 
	\begin{pmatrix}
	a(T) & 0\\
	0 & a(T)^{-1}
	\end{pmatrix},
\qquad |a(T)| > 1,
\end{equation*}
which acts on $\mathfrak{h}^{3}$ as $D(T) v = K(T) z+N(T) rj$. Here $K(T) = a(T)^{2}$ is the \textit{multiplier} of~$T$~and $N(T) = |a(T)|^{2}$ is the \textit{norm} of $T$. They depend only on the class of elements conjugate to $T$, which we denote by $\{T \}$. Since $N(T)$ is invariant under conjugation, we define the norm of a conjugacy class to be the norm of any of its representatives. Let $\Gamma \subset \PSL_{2}(\C)$ and~let $C(T)$ be a centraliser of a hyperbolic or loxodromic element $T \in \Gamma$. Then we say $T_{0}$ is \textit{primitive} if it has minimal norm amongst all elements of $C(T)$.

Furthermore, the Weyl law evinces the asymptotic behaviour for the number of the discrete and continuous spectra in an expanding window. It asserts that (see~\cite[\S6,~Theorem~5.4]{ElstrodtGrunewaldMennicke1998})
\begin{equation}\label{Weyl's-law}
N_{\Gamma}(T)+M_{\Gamma}(T) \sim \frac{\vol(\mathcal{M})}{6\pi^{2}} T^{3}
\end{equation}
as $T \to \infty$, where $N_{\Gamma}(T) \coloneqq \# \{j: t_{j} \leq T \}$, while $M_{\Gamma}(T)$ is the winding number that~accounts for the contribution of the continuous spectrum as follows:
\begin{equation*}
M_{\Gamma}(T) \coloneqq \frac{1}{4\pi} \int_{-T}^{T} -\frac{\varphi^{\prime}}{\varphi}(1+it) dt.
\end{equation*}
Here we have denoted by $\varphi$ the determinant of the scattering matrix of $\Gamma$. If $\Gamma$ is cocompact, $M_{\Gamma}(T)$ vanishes and the formula~\eqref{Weyl's-law} reduces to the asymptotic behaviour of $N_{\Gamma}(T)$. Such a formula for $\Gamma$ cocompact may be deduced from some geometrical argument (without recourse the Selberg trace formula). For our purpose, there is a need to control the size of the spectrum in windows of unit length. To this end, we appeal to the following lemma.
\begin{lemma}[Bonthonneau~{\cite{Bonthonneau2017}}]\label{Bonthonneau}
Let $\Gamma$ be a cofinite Kleinian group. Then we have that
\begin{equation*}
N_{\Gamma}(T)+M_{\Gamma}(T) = \frac{\vol(\mathcal{M})}{6\pi^{2}} T^{3}+O \left(\frac{T^{2}}{\log T} \right).
\end{equation*}
\end{lemma}

The Maa{\ss}--Selberg relation~\cite[Theorem 3.6]{ElstrodtGrunewaldMennicke1998} leads to the bound
\begin{equation}\label{unit-intervals}
\# \{j: T \leq t_{j} \leq T+1 \}+\int_{T \leq |t| \leq T+1} \left|\frac{\varphi^{\prime}}{\varphi}(1+it) \right| dt \ll T^{2},
\end{equation}
which is one of the main inputs in the proof of Theorem~\ref{second-moment}.

The following explicit formula also plays an important r\^{o}le in the proof of Theorem~\ref{second-moment}.
\begin{lemma}[Nakasuji~\cite{Nakasuji2000,Nakasuji2001}]\label{Nakasuji}
Let $\Gamma = \PSL_{2}(\mathcal{O}_{K})$ where $\mathcal{O}_{K}$ is the ring of integers of an imaginary quadratic field $K$ of class number 1. For $X \gg 1$ and $1 \leq T \leq \sqrt{X}$, we have
\begin{equation}\label{explicit-formula}
\Psi_{\Gamma}(X) = \frac{X^{2}}{2}+\sum_{|t_{j}| \leq T} \frac{X^{s_{j}}}{s_{j}}+O \left(\frac{X^{2}}{T} \log X \right),
\end{equation}
where $s_{j} = 1+it_{j}$ runs over Laplacian eigenvalues for $L^{2}(\Gamma \backslash \mathfrak{h}^{3})$ counted with multiplicities.
\end{lemma}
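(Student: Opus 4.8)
The plan is to start from the Selberg trace formula for $\Gamma\backslash\mathfrak{h}^{3}$ (equivalently, from the logarithmic derivative of the Selberg zeta function $Z(s)$), applied to a test function that approximates the characteristic function of the interval $[0,X]$ for the counting problem. Concretely, I would take the standard resolvent-type kernel $h_{X}(t)$ whose Selberg/Mellin transform produces $X^{s}/s$ on the spectral side, and smooth it slightly so as to gain control of tails; this is exactly the device used by Sarnak~\cite{Sarnak1983} to obtain the $X^{5/3+\epsilon}$ bound, and the same mechanism underlies Nakasuji's identity. On the geometric side of the trace formula the hyperbolic/loxodromic conjugacy classes reproduce $\Psi_{\Gamma}(X)$ (up to a harmless weighting by $|a(T)-a(T)^{-1}|^{-2}$ which is bounded once $N(T)$ is bounded below by the shortest geodesic), the identity contribution contributes the leading term $X^{2}/2$ via the main term $\frac{\operatorname{vol}(\mathcal{M})}{6\pi^{2}}t^{3}$ of the Weyl law, and the parabolic/Eisenstein terms contribute only lower-order quantities absorbed into the error.

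The key steps, in order: (i) fix a test function $h$ with $h(t)\approx \int_{0}^{X}x^{s-1}\,dx$-type behaviour, i.e. with spectral transform $\widehat h(s)=X^{s}/s$, and truncate its Fourier/Selberg transform $g$ at a parameter depending on $T$; (ii) insert $h$ into the trace formula, identifying the identity term with $X^{2}/2+\text{(lower order)}$ using Theorem~\ref{Bonthonneau} or the classical Weyl law~\eqref{Weyl's-law}; (iii) match the spectral sum with $\sum_{|t_{j}|\le T}X^{s_{j}}/s_{j}$, estimating the contribution of eigenvalues with $t_{j}>T$ by partial summation against the unit-interval bound~\eqref{unit-intervals} (which gives $N_{\Gamma}(t)\ll t^{3}$, hence a tail of size $\ll X^{2}/T\cdot\log X$ after weighting by $X^{it_{j}}/s_{j}$ and $h$); (iv) bound the continuous-spectrum (winding-number) contribution using the bound on $\int|\varphi'/\varphi(1+it)|\,dt$ from~\eqref{unit-intervals}, which is of the same order $T^{2}$ as the discrete spectrum and hence contributes within the stated error; (v) collect the elliptic and parabolic terms, which are $O(X\log X)$ or smaller, into the error term. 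The constraint $1\le T\le X^{1/2}$ enters because for $T$ that large the truncation error $X^{2}/T$ still dominates the purely archimedean/scattering contributions, so the clean shape $O\big((X^{2}/T)\log X\big)$ is attained; for larger $T$ one would need a genuinely different (longer) expansion.

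The main obstacle I expect is step (iii)–(iv): controlling the oscillatory spectral tail $\sum_{t_{j}>T}\widehat h(s_{j})$ without any cancellation, and simultaneously handling the continuous spectrum. Unlike the two-dimensional case, the three-dimensional Weyl law has density $\sim t^{2}\,dt$, so the naive tail bound is $\sum_{t_{j}>T}\frac{X}{t_{j}}|h(t_{j})|\ll X^{2}/T\cdot\log X$ only after one chooses $h$ decaying like $t^{-2}$ past the transition range and pays one logarithm; getting the logarithm (and not a power) requires the sharp form of~\eqref{unit-intervals}, i.e.\ Theorem~\ref{Bonthonneau} together with the Maa\ss--Selberg relations, rather than the crude Weyl asymptotic. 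This is precisely the point the author flags after Theorem~\ref{second-moment} as the reason the three-dimensional explicit formula is ``not optimal''. Once the tail and the scattering integral are both pinned down at size $\ll (X^{2}/T)\log X$, assembling~\eqref{explicit-formula} is routine.
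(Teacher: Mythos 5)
The paper does not actually prove Lemma~\ref{Nakasuji}: it is quoted from Nakasuji's papers, and the author only remarks that it is ``back-of-the-envelope to show by using properties of the Selberg zeta function,'' and that Nakasuji's derivation produces additional terms of size $O(XT\log T+T^{2})$ which are absorbed into $O((X^{2}/T)\log X)$ precisely because $T\leqslant X^{1/2}$. So the reference proof is a Perron-type contour-integration argument for $-Z'/Z(s)\cdot X^{s}/s$, not the Selberg trace formula with a test function as you propose. That difference is not fatal by itself, but two specific steps in your sketch are wrong as stated.

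First, the error $O((X^{2}/T)\log X)$ is not the tail of a spectral sum. The terms $X^{s_{j}}/s_{j}$ have modulus $\asymp X/t_{j}$ while the eigenvalue density is $\asymp t^{2}\,dt$, so $\sum_{t_{j}>T}|X^{s_{j}}/s_{j}|$ does not converge, and even if you damp by a weight decaying like $(T/t)^{3}$ the tail is of size $\asymp XT^{2}$, not $X^{2}/T$; for $T\le X^{1/2}$ these two quantities are not comparable. In the zeta-function approach the $X^{2}\log X/T$ is the Perron truncation error, which comes from $\Psi_{\Gamma}(X)\ll X^{2}$ (the Chebyshev upper bound for pseudoprimes, i.e.\ the abscissa of convergence of $-Z'/Z$ is $2$) together with the $\log$-weight from norms near $X$; in a trace-formula approach it is the unsmoothing error from replacing a sharp cutoff at $X$ by a bump of width $\asymp X/T$, using that $\Psi'_{\Gamma}(X)\asymp X$. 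Neither is a partial-summation estimate against~\eqref{unit-intervals}.

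Second, the main term $X^{2}/2$ is not produced by the identity contribution via the Weyl law; it is the residue at $s=2$ corresponding to the lowest eigenvalue $\lambda_{0}=0$ (i.e.\ $s_{0}=2$), exactly as the pole of $-\zeta'/\zeta(s)X^{s}/s$ at $s=1$ produces the main term in the classical explicit formula. Relatedly, Theorem~\ref{Bonthonneau} is not what drives the bound: the horizontal-segment contribution needs only the unit-interval zero-density estimate~\eqref{unit-intervals} (a consequence of the Maa{\ss}--Selberg relations), and it is this contribution that yields the superfluous terms $O(XT\log T+T^{2})$ the paper alludes to; your sketch never produces these or shows how they are absorbed for $T\leqslant X^{1/2}$. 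If you want a trace-formula proof, you should make the unsmoothing step explicit (as is done for the smooth explicit formula in~\cite{BalogBiroCherubiniLaaksonen2019}) rather than appeal to a non-convergent spectral tail.
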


Lemma~\ref{Nakasuji} follows from properties of the Selberg zeta function. In the proof of Nakasuji, there are superfluous terms $O(XT \log T+T^{2})$ that are absorbed into the error term in~\eqref{explicit-formula} if $T \leq \sqrt{X}$. Since this error term is at least $O(X^{3/2} \log X)$, we are unable to go beyond~the $3/2$-barrier. If we choose not to consider any cancellation in the spectral sum, then we have $O(T^{2} X+T^{-1} X^{2} \log X)$. A standard optimisation argument recovers the work of Sarnak~\cite{Sarnak1983}. As shall be seen in due course, this formidable situation necessitates a smooth explicit formula to relax the restriction on $T$. In the two-dimensional setting, Iwaniec~\cite[p.139]{Iwaniec1984} put forth~a heuristic for the bound $E_{\Gamma}(X) \ll_{\epsilon} X^{1/2+\epsilon}$ building on the twisted Linnik--Selberg conjecture (see~\cite{BalkanovaFrolenkovRisager2022}). An analogous conjecture over $\Q(i)$ might lead to a stronger estimate than what is currently known, but we leave such pursuits for our future occasion.
\begin{remark}
It is well known that $\lambda_{1}(\Gamma) \geq 975/1024$ for any congruence subgroup $\Gamma$ of $\PSL_{2}(\mathcal{O}_{K})$ (see~\cite{Koyama2004,Nakasuji2012}), where $\lambda_{1}(\Gamma)$ stands for the smallest eigenvalue of the hyperbolic Laplacian (the Ramanujan--Selberg conjecture asserts that $\lambda_{1}(\Gamma) \geq 1$). In this regard, we refer the reader~to work of Grunewald--Mennicke, Stramm~\cite{Stramm1994} and Szmidt~\cite{Szmidt1983}. Intensive numerical calculations in three dimensions were first executed by Smotrov--Golovchansky~\cite{SmotrovGolovchansky1991} and subsequently by many other researchers as in~\cite{AurichSteinerThen2012,GrunewaldHuntebrinker1996,Huntebrinker1991,Huntebrinker1995,Matthies1995,Steil1999}. The first few eigenvalues for the Picard manifold are given by $0$, $44.85 \cdots$, $74.19 \cdots$, $104.64 \cdots$, $124.40 \cdots$. Aurich--Steiner--Then~\cite{AurichSteinerThen2012} have computed 13950 consecutive eigenvalues that play an important r\^{o}le in~\S\ref{numerical-visualisations}.
\end{remark}

\subsection{Kuznetsov Formula}\label{Kuznetsov-formula}
Our experience in the context of $\PSL_{2}(\Z) \backslash \mathfrak{h}^{2}$ shows that it seems reasonable to deem that the Kuznetsov formula in three dimensions has ample applications to various problems over Gaussian integers. The Kuznetsov formula allows one to obtain~better results than those inferred from the Selberg trace formula, and expresses Fourier coefficients of automorphic forms in terms of sums of Kloosterman sums. Note that Kloosterman sums indirectly encode arithmetic information of the group, and sensational revelations were made by Iwaniec in the two-dimensional setting (see, for instance,~\cite{Iwaniec1980,Iwaniec1982}).

For $m, n, c \in \mathcal{O}$ with $c \ne 0$, we define Kloosterman sums over Gaussian integers by
\begin{equation*}
\mathcal{S}(m, n; c) \coloneqq \sum_{a \in (\mathcal{O}/(c))^{\times}} e(\langle m, a/c \rangle) e(\langle n, a^{\ast}/c \rangle),
\end{equation*}
where $a^{\ast}$ signifies the multiplicative inverse of $a$ modulo the ideal $(c)$, i.e. $aa^{\ast} \equiv 1 \tpmod c$. Then the Weil bound for Gaussian Kloosterman sums states that (see~\cite[Equation (3.5)]{Motohashi1997})
\begin{equation}\label{Weil-bound}
|\mathcal{S}(m, n; c)| \leq \sqrt{N(c)} |(m, n, c)| d(c),
\end{equation}
where $d(c)$ denotes the number of divisors of $c$. We also need the Dedekind zeta function~$\zeta_{K}(s)$ over $K = \Q(i)$ as well as the divisor function $\sigma_{\xi}(n) = \sum_{d \mid n} N(d)^{\xi}$. The following Kuznetsov formula for $\Gamma = \PSL_{2}(\mathcal{O})$ was first announced in the seminal work of Motohashi:
\begin{theorem}[Motohashi~\cite{Motohashi1996,Motohashi1997}]\label{Kuznetsov}
Assume that the function $h(t)$ is even and holomorphic in the horizontal strip $|\Im(t)| < 1/2+\epsilon$ for an arbitrary $\epsilon > 0$ and satisfies $h(t) \ll (1+|t|)^{-3-\epsilon}$ in that strip. For $m, n \in \mathcal{O}^{\ast}$, we then have that
\begin{equation*}
D+C = U+S,
\end{equation*}
where
\begin{align*}
D &= \sum_{j \geq 1} \frac{\rho_{j}(n) \overline{\rho_{j}(m)}}{\sinh(\pi t_{j})} t_{j} h(t_{j}),\\
C &= 2\pi \int_{-\infty}^{\infty} \frac{\sigma_{it}(n) \sigma_{it}(m)}{|mn|^{it} |\zeta_{K}(1+it)|^{2}} h(t) dt,\\
U &= \pi^{-2}(\delta_{m = n}+\delta_{m = -n}) \int_{-\infty}^{\infty} t^{2} h(t) dt,\\
S &= \sum_{c \in \mathcal{O}^{\ast}} \frac{\mathcal{S}(m, n; c)}{N(c)} \psi(2\pi \varsigma)
\end{align*}
with $\varsigma = \sqrt{\overline{mn}}/c$. Here $\delta_{m = n}$ is the Kronecker symbol and
\begin{equation}\label{omega}
\psi(z) = \int_{-\infty}^{\infty} \frac{it^{2}}{\sinh(\pi t)} h(t) \mathcal{J}_{it}(z) dt,
\end{equation}
\begin{equation}\label{formulation}
\mathcal{J}_{\nu}(z) = \left(\frac{|z|}{2} \right)^{2\nu} J_{\nu}^{\ast}(z) J_{\nu}^{\ast}(\overline{z}),
\end{equation}
where $J_{\nu}^{\ast}(z) = J_{\nu}(z) (z/2)^{-\nu}$ with $J_{\nu}(z)$ being the $J$-Bessel function of order $\nu$.
\end{theorem}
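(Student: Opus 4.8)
The plan is to compute the $L^{2}$ inner product of two Poincaré series in two complementary ways, transplanting the classical derivation of the Bruggeman--Kuznetsov formula to $\mathfrak{h}^{3}$. Fix $m, n \in \mathcal{O}^{\ast}$ and a radial seed $f \colon (0, \infty) \to \C$ --- a normalised superposition of Bessel functions $K_{it}(2\pi|m|r)$ whose Kontorovich--Lebedev-type transform is prescribed by $h$ --- and set $P_{m}(z + rj) = e(\Re(m\overline{z}))\, f(r)$, forming the Poincaré series
\begin{equation*}
U_{m}(v) = \sum_{\gamma \in \Gamma_{\infty} \backslash \Gamma} P_{m}(\gamma v),
\end{equation*}
which lies in $L^{2}(\Gamma \backslash \mathfrak{h}^{3})$ once $h$ enjoys the decay $h(t) \ll (1+|t|)^{-3-\epsilon}$ assumed in the theorem. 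The spectral side $D + C$ then appears by expanding $U_{m}$ and $U_{n}$ along the orthonormal system $\{u_{j}\}$ together with the Eisenstein continuum and invoking Parseval: unfolding $\langle U_{m}, u_{j} \rangle$ against~\eqref{Fourier-development} isolates $\overline{\rho_{j}(m)}$ times a Mellin--Bessel transform of $f$ at $it_{j}$, while the Eisenstein inner products produce the divisor factors $\sigma_{it}(m)\sigma_{it}(n)$ and the reciprocal $1/|\zeta_{K}(1+it)|^{2}$; the weights $t_{j}/\sinh \pi t_{j}$ drop out of the $K$-Bessel normalisation $\int_{0}^{\infty} K_{it}(y)^{2}\, dy/y$.

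Next I would evaluate the same inner product geometrically by unfolding one copy and partitioning $\Gamma_{\infty} \backslash \Gamma / \Gamma_{\infty}$ by the lower-left entry $c$. The double coset with $c = 0$, whose representatives lie in $\Gamma_{\infty}$ and act through units and lattice translations, survives only when $m = \pm n$, and the residual integral $\int_{0}^{\infty} |f(r)|^{2}\, dr/r^{3}$, re-expanded by Mellin inversion, becomes $\pi^{-2}(\delta_{m=n}+\delta_{m=-n}) \int_{-\infty}^{\infty} t^{2} h(t)\, dt$ --- this is the term $U$. For $c \ne 0$, the Bruhat decomposition $\gamma = \begin{pmatrix} a & \ast \\ c & d \end{pmatrix}$ together with summation over the residues of $a$ and $d$ modulo $(c)$ produce $\mathcal{S}(m, n; c)/N(c)$, while the leftover integral over $\C \times (0, \infty)$ of $P_{m}$ against the $\Gamma$-image of the $n$-seed is a three-real-dimensional oscillatory integral that I expect to collapse to the kernel $\mathcal{J}_{it}(2\pi\varsigma)$ of~\eqref{formulation} with $\varsigma = \sqrt{\overline{mn}}/c$; integrating against $f$ reproduces $\psi$ as in~\eqref{omega}, and summing over $c \in \mathcal{O}^{\ast}$ yields $S$. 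Absolute convergence --- hence the legitimacy of every rearrangement above --- follows from the Weil bound~\eqref{Weil's-bound} and the decay of the Bessel kernel.

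Equating the two evaluations gives $D + C = U + S$ for the subclass of test functions $h$ realised by admissible seeds $f$. I would then upgrade to every even $h$ holomorphic in $|\Im(t)| < 1/2 + \epsilon$ with $h(t) \ll (1+|t|)^{-3-\epsilon}$, either by explicitly inverting the transform $f \mapsto h$ (Kontorovich--Lebedev inversion shows that every admissible $h$ does arise from some $f$) or by a density argument, both sides being continuous functionals of $h$ in a suitable topology with the test class dense. The decay exponent $3 + \epsilon$ is precisely what the three-dimensional volume growth forces for the interchanges of the $\Gamma$-sum with the spectral expansion and with the $v$-integration.

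The main obstacle is the archimedean computation in the second step: verifying that the oscillatory integral over the non-trivial Bruhat cell collapses to exactly $(|z|/2)^{2it} J_{it}^{\ast}(z) J_{it}^{\ast}(\overline{z})$, and that the $f$-integral against it gives precisely the transform~\eqref{omega} with every $\sinh \pi t$ normalisation in place. This is the Gaussian-integer analogue of the classical Weber--Schafheitlin evaluation, and the splitting of the kernel into a holomorphic and an anti-holomorphic Bessel factor --- the product $J_{\nu}^{\ast}(z) J_{\nu}^{\ast}(\overline{z})$ --- together with the contour shifts needed to separate the $\delta$-terms cleanly from the Kloosterman terms, is where the genuine difficulty beyond the real-place case resides.
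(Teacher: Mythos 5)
The paper does not prove Theorem~\ref{Kuznetsov}: it is cited directly from Motohashi~\cite{Motohashi1996,Motohashi1997} with no internal argument, so there is strictly no proof in the paper to compare against. Your sketch is the standard Poincar\'{e}-series derivation of Bruggeman--Kuznetsov formulae --- evaluating $\langle U_{m}, U_{n} \rangle$ spectrally via Parseval to extract $D+C$ and geometrically via the Bruhat decomposition of $\Gamma_{\infty} \backslash \Gamma / \Gamma_{\infty}$, with the $c = 0$ cell yielding $U$ and the $c \ne 0$ cells yielding $S$ --- and this is indeed the route Motohashi takes in the cited works, so your outline is consistent with the actual provenance of the theorem. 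You also correctly identify the genuinely difficult step, namely the archimedean evaluation over the nontrivial Bruhat cell that produces the split Bessel kernel $\mathcal{J}_{\nu}(z) = (|z|/2)^{2\nu} J_{\nu}^{\ast}(z) J_{\nu}^{\ast}(\overline{z})$ rather than a single $J$-Bessel function as in the Fuchsian case, and you correctly explain the decay threshold $h(t) \ll (1+|t|)^{-3-\epsilon}$ as forced by the $T^{3}$ Weyl-law growth in dimension three; but be aware that your description is a roadmap rather than a proof, and the key computation (the analogue of Weber--Schafheitlin over $\C$, together with the contour-shift bookkeeping that separates $U$ from $S$ cleanly) is nontrivial enough that Motohashi devotes substantial effort to it.
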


The choice of the sign of $\varsigma$ is immaterial as $J_{\nu}^{\ast}(z)$ is a function of $z^{2}$. We could put~$J_{\nu}(z) J_{\nu}(\overline{z})$ instead of the kernel $\mathcal{J}_{\nu}(z)$, but the formulation~\eqref{formulation} is to circumvent the possible ambiguity pertaining to the branching of the value of $J_{\nu}(z)$. When $\Gamma = \PSL_{2}(\Z)$, the Kloosterman~sum is expressed in terms of a finite Mellin transform of the square of the normalised Gau{\ss}~sum (which functions as a $p$-adic version of the gamma function). Therefore, the Kloosterman~sum would be thought of as a finite analogue of the Bessel function, or conversely the latter is an archimedean analogue of the former, as shall be seen from the formula (see~\cite[8.432.7]{GradshteynRyzhik2007})
\begin{equation*}
K_{0} \left(\frac{\sqrt{nm}}{c} \right) = \frac{1}{2} \int_{-\infty}^{\infty} 
\exp \left(-\frac{1}{c} \left(nx+\frac{m}{x} \right) \right) \frac{dx}{x}.
\end{equation*}
In the context of $\Gamma = \PSL_{2}(\mathcal{O})$, an analogous phenomenon occurs, explaining the reason~for the occurrence of the Bessel functions in the integral transform $\psi(z)$ in~\eqref{omega}.

In order to discuss some applications, one needs to excogitate an idea to invert the integral transform~\eqref{omega} and to generalise the Kloosterman summation formula that expresses sums of Kloosterman sums in terms of Fourier coefficients of automorphic forms (see~\cite[Lemma~5]{Iwaniec1984} and~\cite[Theorem~2.3]{Motohashi1997-2}). Such an issue coincides with the problem (1) due to Motohashi~\cite[p.277]{Motohashi2001}. Furthermore, Kuznetsov formul{\ae} associated to Hecke Gr\"{o}{\ss}encharakters would assist in developing problems over arbitrary imaginary quadratic number fields.

\section{Proof of Theorem~\ref{second-moment}}\label{proof-of-Theorem-1.1}
This section sheds a spotlight on the evaluation of the second moment of $E_{\Gamma}(X)$, namely
\begin{equation}\label{square-mean}
\frac{1}{\Delta} \int_{V}^{V+\Delta} \left|\Psi_{\Gamma}(X)-\frac{X^{2}}{2} \right|^{2} dX
 = \frac{1}{\Delta} \int_{V}^{V+\Delta} |E_{\Gamma}(X)|^{2} dX
\end{equation}
for some $V$ and $1 \ll \Delta \leq V$. Obtaining a favourable bound for~\eqref{square-mean} requires the utilisation of the Kuznetsov formula with a sufficiently well-behaved test function, the Hardy--Littlewood--P\'{o}lya inequality, and the spectral second moment bound for symmetric square $L$-functions. The pursuits in this section are motivated by Balog et al.~\cite{BalogBiroHarcosMaga2019} and Cherubini--Guerreiro~\cite{CherubiniGuerreiro2018}.

\subsection{Admissible Test Functions}\label{admissible-test-functions}
We use the Kuznetsov formula for which we borrow~the test function from the work of Deshouillers--Iwaniec~\cite{DeshouillersIwaniec1986}. In fact, for $X, T \geq 1$, we choose~a test function whose integral transform behaves like $X^{it}$ with an exponential weight. We~define
\begin{equation*}
\varphi(x) \coloneqq \frac{\sinh \beta}{\pi} x \exp(ix \cosh \beta), \qquad 2\beta = \log X+\frac{i}{T}.
\end{equation*}
The Bessel--Kuznetsov transform\footnote{There is a typographical error in the definition of $\hat{\varphi}(t)$ in the antecedent work such as~\cite[p.68, line~$-1$]{DeshouillersIwaniec1986} and~\cite[p.233, line~$-3$]{LuoSarnak1995}, where the imaginary unit $i$ was placed in the denominator instead of the numerator. The correct version has to be~\eqref{Bessel-Kuznetsov-transform}. This explains the sign change in the definition of $\varphi(x)$.}
\begin{equation}\label{Bessel-Kuznetsov-transform}
\hat{\varphi}(t) \coloneqq \frac{\pi i}{2 \sinh(\pi t)} \int_{0}^{\infty} (J_{2it}(x)-J_{-2it}(x)) \varphi(x) \frac{dx}{x}
\end{equation}
satisfies for $t > 0$ that
\begin{equation}\label{Bessel-Kuznetsov}
\hat{\varphi}(t) = \frac{\sinh((\pi+2i \beta)t)}{\sinh(\pi t)} = X^{it} e^{-\frac{t}{T}}+O(e^{-\pi t}).
\end{equation}
For bounded $t$ away from zero, the transform $\hat{\varphi}(t)$ is approximated by replacing $\sinh$ with~$\cosh$, namely
\begin{equation}\label{approximated-cosh}
\hat{\varphi}(t) = \frac{\cosh((\pi+2i\beta)t)}{\cosh(\pi t)}+O(e^{-\frac{\pi t}{2}}).
\end{equation}
This asymptotic formula is used in our estimation of the second moment of sums of Kloosterman sums over Gaussian integers. The contribution of the term $D$ in Theorem~\ref{Kuznetsov} becomes
\begin{equation}\label{D}
\sum_{j} |\nu_{j}(n)|^{2} \hat{\varphi}(t_{j}) = \sum_{j} \alpha_{j} \hat{\varphi}(t_{j}) |\lambda_{j}(n)|^{2}.
\end{equation}
It suffices to analyse the remaining terms in the Kuznetsov formula to reduce the problem to the evaluation of the second moment of~\eqref{D}. In the following, we compute the contributions of $C$, $U$, and $S$, separately. For the above test function $\varphi$, a simple consideration shows that
\begin{equation}\label{C-U}
|C| \ll T(\log T)^{2} d(n)^{2} \qquad \text{and} \qquad |U| \ll \left|\int_{0}^{T} X^{it} t^{2} dt \right| \ll T^{2}.
\end{equation}
If the sum of Kloosterman sums is denoted by
\begin{equation*}
\mathcal{S}_{n}(h) \coloneqq \sum_{c \in \mathcal{O}^{\ast}} 
\frac{\mathcal{S}(n, n; c)}{N(c)} h \left(\frac{2\pi \overline{n}}{c} \right),
\end{equation*}
we derive
\begin{equation}\label{S}
S = \sum_{c \in \mathcal{O}^{\ast}} \frac{\mathcal{S}(n, n; c)}{N(c)} 
\int_{-\infty}^{\infty} \frac{it^{2}}{\sinh(\pi t)} \hat{\varphi}(t) \mathcal{J}_{it} \left(\frac{2\pi \overline{n}}{c} \right) dt
 = \mathcal{S}_{n}(\psi), \qquad \psi(z) = \int_{-\infty}^{\infty} \mathcal{K}_{it}(z) t^{2} \hat{\varphi}(t) dt,
\end{equation}
where we define $\mathcal{K}_{\nu}(z) \coloneqq (\mathcal{J}_{-\nu}(z)-\mathcal{J}_{\nu}(z))/\sin \pi \nu$. If $\vartheta = \arg z$ so that $z = |z| e^{i\vartheta}$, then for $|\Re(\nu)| < 1/2$, we find the integral representation (see~\cite[Equation (2.10)]{Motohashi1997})
\begin{equation*}
\mathcal{K}_{\nu}(z) = \frac{8\cos \pi \nu}{\pi^{2}} \int_{0}^{\pi/2} \cos(2|z| \cos \vartheta \sin \tau) K_{2\nu}(2|z| \cos \tau) d\tau.
\end{equation*}
This is indicative of\footnote{There is an error in the integral representation of $\psi(z)$ in~\cite[p.12, line 14]{BalkanovaChatzakosCherubiniFrolenkovLaaksonen2019} where the coefficient is written as $4i/\pi^{2}$ instead of $8/\pi^{2}$. We obtain the correct factor by making a change of variables $\tau \mapsto \arcsin(\cos \tau)$ in the formula~\cite[Equation (2.10)]{Motohashi1997}.}
\begin{equation}\label{integral-representation-for-psi}
\psi(z) = \frac{8}{\pi^{2}} \int_{0}^{\pi/2} \cos(2|z| \cos \vartheta \sin \tau) I(2|z| \cos \tau) d\tau,
\end{equation}
where
\begin{equation}\label{I(x)}
I(x) \coloneqq \int_{-\infty}^{\infty} t^{2} \hat{\varphi}(t) \cosh(\pi t) K_{2it}(x) dt.
\end{equation}
When $t$ is bounded, the trivial bound for $\psi(z)$ is derived via the asymptotic (see~\cite{Iwaniec2002})
\begin{equation*}
K_{2it}(y) = \sqrt{\frac{\pi}{2y}} e^{-y} \left(1+O \left(\frac{1}{y} \right) \right)
\end{equation*}
for all $t \in \R$. This gives the bound
\begin{equation}\label{trivial-bound}
\psi(z) \ll |z|^{-\frac{1}{2}}.
\end{equation}

\subsection{Sums of Kloosterman Sums}\label{sums-of-Kloosterman-sums}
Our interest lies in the second moment of the spectral-arithmetic average of the shape
\begin{equation}\label{second-moment-of-the-spectral-arithmetic-average}
\frac{1}{\Delta} \int_{V}^{V+\Delta} \left|\sum_{j} \alpha_{j} \hat{\varphi}(t_{j}) |\lambda_{j}(n)|^{2} \right|^{2} dX.
\end{equation}
The Kuznetsov formula (Theorem~\ref{Kuznetsov}) reduces the problem to the estimation of the second moment of sums of Kloosterman sums, namely
\begin{equation}\label{second-moment-for-sums-of-Kloosterman-sums}
\frac{1}{\Delta} \int_{V}^{V+\Delta} |\mathcal{S}_{n}(\psi)|^{2} dX
 = \sum_{c_{1}, c_{2} \in \mathcal{O}^{\ast}} \frac{\mathcal{S}(n, n; c_{1}) \overline{\mathcal{S}(n, n; c_{2})}}{N(c_{1} c_{2})} 
\frac{1}{\Delta} \int_{V}^{V+\Delta} \psi \left(\frac{2\pi \overline{n}}{c_{1}} \right) 
\overline{\psi \left(\frac{2\pi \overline{n}}{c_{2}} \right)} dX.
\end{equation}
We now aim to replace $\mathcal{S}_{n}(\psi)$ with a finite sum involving the $K$-Bessel function of order~zero
\begin{equation}\label{K-Bessel-order-zero}
K_{0}(z) = \int_{0}^{\infty} \cos(z \sinh \varphi) d\varphi.
\end{equation}
Throughout this subsection, we shall assume that $n \in \mathcal{O}$ satisfies $N(n) \sim N$, which~indicates $N \leq N(n) \leq 2N$ for $N > 1$, and that $T$, $X$, $V$, and $\Delta$ are real numbers such that
\begin{equation}\label{assumption}
1 \ll \Delta \leq V \leq X \leq V+\Delta \qquad \text{and} \qquad V^{\epsilon} \ll T \leq \sqrt{V}.
\end{equation}
In what follows, we assume that $N \ll (TX)^{A}$ for some fixed $A > 0$. For $N$ and $V$ satisfying the hypothesis~\eqref{assumption}, we often use the abbreviation
\begin{equation}\label{abbreviation-1}
X \preccurlyeq Y \qquad \stackrel{\mathrm{def}}{\Longleftrightarrow} \qquad X \ll_{\epsilon} (NV)^{\epsilon} Y
\end{equation}
for two free parameters $X$ and $Y$. This notation is in force in various situations.
\begin{remark}\label{remark-1}
The author had been addressing the problem of generalising the work of Balog et al.~\cite{BalogBiroHarcosMaga2019} to the three-dimensional setting, obtaining Theorem~\ref{second-moment-of-sums-of-Kloosterman-sums} and Lemmata~\ref{truncated}--\ref{Cherubini-Guerreiro-analogue}. Soon after I finished writing up the first draft, I noticed the article~\cite{ChatzakosCherubiniLaaksonen2022}, wherein their results end up overlapping with ours. For completeness, we record our calculations pursued independently. For Lemmata~\ref{truncated-2} and~\ref{Cherubini-Guerreiro-analogue}, we offer rough proofs with an adjustment of our notation to theirs wherever possible. Nevertheless, we stress that our resulting bounds for the second moment of the spectral exponential sum are stronger than those in~\cite{ChatzakosCherubiniLaaksonen2022}.
\end{remark}

In order to facilitate the forthcoming analysis, we prove the following.
\begin{theorem}\label{second-moment-of-sums-of-Kloosterman-sums}
Keep the notation and assumptions as above. Then we have that
\begin{equation*}
\frac{1}{\Delta} \int_{V}^{V+\Delta} |\mathcal{S}_{n}(\psi)|^{2} dX \preccurlyeq \Delta^{-1} NV^{2}+T^{3},
\end{equation*}
where the implicit constant in $\preccurlyeq$ is absolute.
\end{theorem}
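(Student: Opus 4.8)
The plan is to open up the square on the right-hand side of~\eqref{second-moment-for-sums-of-Kloosterman-sums}, so that the task becomes that of bounding the diagonal and off-diagonal contributions of the double sum over $c_{1},c_{2}\in\mathcal{O}^{\ast}$ weighted by $\mathcal{S}(n,n;c_{1})\overline{\mathcal{S}(n,n;c_{2})}/N(c_{1}c_{2})$ against the averaged product of $\psi(2\pi\overline{n}/c_{1})$ and $\overline{\psi(2\pi\overline{n}/c_{2})}$. The first step is to replace $\psi(z)$ by a manageable proxy: starting from the integral representation~\eqref{integral-representation-for-psi}, I would insert the shape of $\hat{\varphi}(t)$ from~\eqref{Bessel-Kuznetsov} (using~\eqref{approximated-cosh} to swap $\sinh$ for $\cosh$, at the cost of an $O(e^{-\pi t/2})$ error that is negligible), perform the $t$-integral defining $I(x)$ by stationary phase, and arrive at an approximation of $\psi(z)$ as a short sum of terms of the form (oscillating phase in $|z|$ and $X$)$\times K_0$-type or elementary amplitudes, valid up to an error that is acceptable after being fed back through the Weil bound~\eqref{Weil's-bound}. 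Concretely, only the range $N(c)\preccurlyeq NT^{2}$ or so contributes non-negligibly because of the decay of $\psi$ (the trivial bound~\eqref{trivial-bound} already truncates the tail), and within that range $\psi(2\pi\overline{n}/c)$ is governed by a phase roughly linear in $\log X$ whose derivative in $X$ is of size $T$ times something involving $\sqrt{N}/|c|$.

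The second step is the $X$-average. After the reductions above, $\frac{1}{\Delta}\int_{V}^{V+\Delta}\psi(2\pi\overline{n}/c_{1})\overline{\psi(2\pi\overline{n}/c_{2})}\,dX$ splits into a \emph{diagonal-in-phase} part and an oscillatory part. For the diagonal part (morally $c_{1}=c_{2}$, or more precisely the stationary-phase pieces whose combined $X$-phase is stationary), I expect a contribution of size $\preccurlyeq T^{2}\sum_{N(c)\preccurlyeq NT^{2}} |\mathcal{S}(n,n;c)|^{2}/N(c)^{2}\cdot(\text{amplitude})$; feeding in the Weil bound $|\mathcal{S}(n,n;c)|\ll N(c)^{1/2}|n|\,d(c)$ and summing, this produces the term $T^{3}$ (up to $(NV)^{\epsilon}$), which is one of the two terms claimed. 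For the off-diagonal/oscillatory part, one integrates by parts repeatedly in $X$: each integration gains a factor $\preccurlyeq (\Delta\cdot|\partial_{X}(\text{phase})|)^{-1}$, and since the phase derivative is $\gg T|c_{1}-c_{2}|\cdot(\dots)/V$ away from the diagonal, the resulting geometric series sums to something controlled by $\Delta^{-1}NV^{2}$ after one collects the $c_{1},c_{2}$ sums with the Weil bound again (the $NV^{2}$ reflecting $|n|^{2}\sim N$, the length $V$ of the relevant $c$-ranges squared against $N(c_1c_2)$, and the $V$ from $\partial_X$ being of relative size $1/V$). The $\Delta^{-1}$ is exactly the gain from the interval length in the integration by parts.

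Assembling the two pieces gives $\frac{1}{\Delta}\int_{V}^{V+\Delta}|\mathcal{S}_{n}(\psi)|^{2}\,dX\preccurlyeq \Delta^{-1}NV^{2}+T^{3}$, with absolute implied constant as asserted.

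\textbf{Main obstacle.} The crux — and the point at which I would spend the most care — is the \emph{explicit evaluation of the oscillatory integral} $I(x)$, equivalently of $\psi(z)$, with enough precision to locate the stationary point in $X$ and to control the second derivative (hence the width of the stationary-phase window) uniformly in $c$ over the whole relevant range, including the transitional regime where $2|z|\cos\tau$ in~\eqref{integral-representation-for-psi} is comparable to $t^{2}$ and the $K$-Bessel factor $K_{2it}(x)$ crosses from oscillatory to exponentially decaying behaviour. Handling the $\tau$-integral in~\eqref{integral-representation-for-psi} simultaneously with the $t$-integral, and making sure the error terms from each stationary-phase approximation remain smaller than $\Delta^{-1}NV^{2}+T^{3}$ after being summed against $|\mathcal{S}(n,n;c_{1})\mathcal{S}(n,n;c_{2})|$, is where the delicate analysis of Section~\ref{second-moment-of-sums-of-Kloosterman-sums} really lies; the subsequent $X$-average and $c$-summation, while lengthy, are then routine integration-by-parts and divisor-sum estimates.
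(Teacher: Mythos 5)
Your high-level plan (open the square, replace $\psi$ by a manageable proxy, split into diagonal and off-diagonal, harvest $T^{3}$ from the diagonal and $\Delta^{-1}NV^{2}$ from the off-diagonal) matches the paper's strategy in outline. But there are two places where the proposal departs significantly from what actually works, and one of them is a genuine gap.

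First, the reduction of $\psi$. You propose evaluating $I(x)$ by stationary phase in $t$ and $\tau$; the paper avoids this entirely. It first uses the power-series expansion of $\mathcal{J}_{it}(z)$ (Lemma~\ref{truncated}) to isolate the $l=m=0$ term, producing $\tilde{\psi}(z)$, and then — crucially — recognises $\tilde{\psi}(z)$ as a closed-form expression $\frac{iM^{2}X|z|^{2}}{\pi}K_{0}(X^{1/2}M|z|)$ plus a small error (Lemma~\ref{truncated-2}), via a Mellin-type identity. This substitutes an exact $K_{0}$ amplitude for any stationary-phase asymptotic, and it is this exactness that makes the subsequent $X$-integral (Lemma~\ref{Cherubini-Guerreiro-analogue}) clean. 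Relatedly, your effective truncation range is off: the surviving range of $c$ is $C_{1}<N(c)\leqslant C_{2}$ with $C_{1}\asymp N(n)V/(T\log T)^{2}$ and $C_{2}\asymp N(n)V$, not $N(c)\preccurlyeq NT^{2}$; and the $X$-derivative of the relevant phase is $\asymp |n|\,\bigl||c_{1}|-|c_{2}|\bigr|/(|c_{1}c_{2}|\sqrt{X})$, with no $T$-factor — the $T$-dependence lives in the decay of the amplitude, not in the oscillation rate.

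Second, and more seriously, the off-diagonal summation. After the oscillatory-integral bound one is left with a double sum involving a factor $\bigl||c_{1}|-|c_{2}|\bigr|^{-1}$ against Weil-bounded Kloosterman weights, and the near-diagonal terms make a naive $c$-summation diverge logarithmically. You assert that the ``resulting geometric series'' converges to $\Delta^{-1}NV^{2}$, but you give no mechanism for this near-diagonal barrier. The paper resolves it by \emph{interpolating} the two bounds $\min\bigl(V^{-1/2},\ \Delta^{-1}|z_{1}-z_{2}|^{-1}\bigr)\ll_{\lambda}\Delta^{-\lambda}V^{(\lambda-1)/2}|z_{1}-z_{2}|^{-\lambda}$ with exponent $\lambda<1$, and then invoking the Hardy--Littlewood--P\'{o}lya inequality~\eqref{Hardy--Littlewood--Polya-inequality} to sum $\sum_{k_{1}\ne k_{2}}a_{k_{1}}a_{k_{2}}/|k_{1}-k_{2}|^{\lambda}$. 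Taking $\lambda=1-\epsilon$ then produces $\Delta^{-1}NV^{2}$. Without HLP (or an equivalent device — e.g.\ the explicit-constant version of Carneiro--Vaaler or the $\lambda=1$ log-corrected version of Li--Villavert, both cited in the paper) your integration-by-parts argument stalls precisely at the diagonal, so the ``main obstacle'' you identify (stationary-phase evaluation of $I(x)$) is not where the real difficulty is.
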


This is an analogue of~\cite[Lemma 4.1]{CherubiniGuerreiro2018} when $V = \Delta$ and allows us to establish Theorem~\ref{second-moment}. To prove Theorem~\ref{second-moment-of-sums-of-Kloosterman-sums}, we first need to simplify expressions involving $\psi(z)$ via a power series expansion of $J$-Bessel functions as in the work of Koyama~\cite{Koyama2001}.
\begin{lemma}\label{truncated}
Let
\begin{equation}\label{omega-tilde}
\tilde{\psi}(z) = \int_{-\infty}^{\infty} \frac{it^{2} \hat{\varphi}(t)}{\sinh(\pi t)} \left|\frac{z}{2} \right|^{2it} \Gamma(1+it)^{-2} dt.
\end{equation}
Then we have that
\begin{equation}\label{S-flat}
\mathcal{S}_{n}(\psi) = \mathcal{S}_{n}^{\flat}(\tilde{\psi})+O_{\epsilon}(N^{\frac{1}{2}+\epsilon} T^{1+\epsilon}),
\end{equation}
where
\begin{equation*}
\mathcal{S}_{n}^{\flat}(\tilde{\psi})
 = \sum_{N(c) > 4\pi^{2} N(n)} \frac{\mathcal{S}(n, n; c)}{N(c)} \tilde{\psi} \left(\frac{2\pi \overline{n}}{c} \right).
\end{equation*}
\end{lemma}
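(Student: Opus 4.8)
The plan is to transform the oscillatory integral transform $\psi(z)$ into its leading-order approximation $\tilde{\psi}(z)$ by substituting the power-series expansion of the Bessel functions, and to control the resulting error terms via the Weil bound for Gaussian Kloosterman sums together with the rapid decay of $\hat{\varphi}(t)$. First I would recall from~\eqref{formulation} that $\mathcal{J}_{it}(z) = (|z|/2)^{2it} J_{it}^{\ast}(z) J_{it}^{\ast}(\overline{z})$ and expand each $J_{it}^{\ast}$ in its defining power series $J_{\nu}^{\ast}(z) = (z/2)^{-\nu} J_{\nu}(z) = \sum_{\ell \geqslant 0} \frac{(-1)^{\ell}}{\ell! \, \Gamma(\nu+\ell+1)} (z/2)^{2\ell}$. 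The $\ell = 0$ term of the product $J_{it}^{\ast}(z) J_{it}^{\ast}(\overline{z})$ contributes exactly $\Gamma(1+it)^{-2}$, which upon insertion into~\eqref{omega} (after passing to the kernel $\mathcal{K}_{it}$ and unfolding as in~\eqref{S}) reproduces $\tilde{\psi}(z)$ as defined in~\eqref{omega-tilde}. The remaining terms $\ell \geqslant 1$, or cross-terms with total degree $\geqslant 1$ in $(z/2)^{2}$ and $(\overline{z}/2)^{2}$, each carry at least one extra factor of $|z/2|^{2} = \pi^{2} N(n)/N(c)$ when evaluated at $z = 2\pi\overline{n}/c$; since the flat sum $\mathcal{S}_{n}^{\flat}$ already restricts to $N(c) > 4\pi^{2} N(n)$, i.e.\ $|z/2| < 1/2$, these tail terms converge geometrically and are dominated by the first correction.

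Next I would estimate the two sources of error. The first is the \emph{truncation error}: the difference between the full sum $\mathcal{S}_n$ over all $c \in \mathcal{O}^{\ast}$ and the flat sum restricted to $N(c) > 4\pi^2 N(n)$. For the finitely many $c$ with $N(c) \leqslant 4\pi^{2}N(n)$ we use the trivial bound~\eqref{trivial-bound}, $\psi(z) \ll |z|^{-1/2}$, so each term is $\ll N(c)^{-1} \cdot N(c)^{1/2} d(c) |n| \cdot |c|^{1/2} N(n)^{-1/4}$ after inserting the Weil bound~\eqref{Weil's-bound} with $|(n,n,c)| \leqslant |n|$; summing over $N(c) \ll N(n)$ gives a bound of size $\preccurlyeq N^{1/2+\epsilon}$, and one checks the $T$-dependence enters only through the implied dependence of $\psi$ on $T$ via $\hat\varphi$, costing at most $T^{1+\epsilon}$ from the weight. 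The second is the \emph{Bessel-expansion error} on the range $N(c) > 4\pi^2 N(n)$: here $|z/2|^{2} < 1/4$, and the $\ell \geqslant 1$ tail is $\ll |z/2|^{2} \ll N(n)/N(c)$ uniformly, while the $t$-integral against $\frac{it^{2}\hat\varphi(t)}{\sinh\pi t} |z/2|^{2it}$ is controlled using $\hat\varphi(t) \ll \min(1, e^{-\pi t})$ from~\eqref{Bessel-Kuznetsov} (the $X^{it}e^{-t/T}$ piece contributes $O(T^{3})$ to $\int t^2 |\hat\varphi(t)|\,dt$, but once multiplied by the extra $N(n)/N(c)$ and summed against $\mathcal{S}(n,n;c)/N(c)$ via Weil this collapses to the stated shape). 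Assembling, $\sum_{N(c) > 4\pi^2 N(n)} \frac{|\mathcal{S}(n,n;c)|}{N(c)} \cdot \frac{N(n)}{N(c)} \cdot (\text{weight}) \preccurlyeq N^{1/2+\epsilon} T^{1+\epsilon}$, which matches the claimed error in~\eqref{S-flat}.

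The main obstacle I anticipate is \textbf{uniformity in $T$} throughout the Bessel-expansion step: the transform $\psi(z)$ depends on $X$ and $T$ only through $\hat\varphi$, whose relevant mass sits on $t \asymp T$ where $|z/2|^{2it}$ oscillates rapidly, so one cannot naively bound the $\ell \geqslant 1$ tail term by term without losing powers of $T$. The resolution is to keep the $t$-integral intact, interchange it with the (absolutely convergent, once $N(c) > 4\pi^2 N(n)$) sum over $\ell$, and bound the whole expression using $\int_{\R} t^{2} |\hat\varphi(t)| / |\sinh \pi t| \, dt \ll T$ — which follows because $\hat\varphi(t)/\sinh\pi t \ll e^{-\pi t}\cdot t^{-1}$ effectively after~\eqref{Bessel-Kuznetsov} kills the $X^{it}$ part's contribution beyond $O(T)$ in this weighted norm — rather than the cruder $\int t^2|\hat\varphi(t)|\,dt \ll T^3$. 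A secondary subtlety is bookkeeping the branch of $z^{2it}$ and $J_\nu^{\ast}$, but as the excerpt already notes after Theorem~\ref{Kuznetsov}, the formulation via $J_\nu^{\ast}(z)J_\nu^{\ast}(\overline z)$ makes everything a single-valued function of $z^{2}$, so no genuine difficulty arises there. Once these estimates are in place, collecting the two error contributions yields precisely~\eqref{S-flat}.
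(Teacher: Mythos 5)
Your decomposition matches the paper's first move — truncate at $N(c) = 4\pi^{2}N(n)$ and use Koyama's power-series expansion of $\mathcal{J}_{it}(z)$ so that the $l=m=0$ term isolates $\tilde\psi$ — but your treatment of the initial range $N(c)\leqslant 4\pi^{2}N(n)$ has a genuine gap. You invoke only the ``trivial bound''~\eqref{trivial-bound}, $\psi(z)\ll|z|^{-1/2}$, and then claim the $T$-dependence of the implied constant ``costs at most $T^{1+\epsilon}$ from the weight,'' but this is precisely the nontrivial content of the paper's proof. Applying $|K_{2it}(y)|\ll y^{-1/2}$ inside~\eqref{I(x)} and absorbing the $t$-integral in absolute value loses roughly $T^{5/2}$, since the surviving weight $|\hat\varphi(t)\cosh\pi t\,K_{2it}(x)|$ behaves like $t^{3/2}e^{-t/T}$ when $|z|$ is of order $1$, so the naive trivial bound overshoots the claimed error. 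To get the factor $T^{1+\epsilon}$ the paper establishes the genuinely nontrivial estimate~\eqref{omega-bound}, $\psi(z)\ll|z|^{-1/2}+T^{2+\epsilon}X^{-1/2}|z|^{-1}+T^{-\ell}X|z|^{2}$, by replacing $\hat\varphi$ with the $\cosh$-approximation~\eqref{approximated-cosh}, evaluating the inner $t$-integral in closed form via Gradshteyn--Ryzhik~6.795.1 as $-\tfrac{\pi}{8}\partial_{k}^{2}\exp(-2|z|\cos\tau\cosh k)$ with $k=\beta-\pi i/2$, splitting the $\tau$-integral at $\cos\tau\asymp\Re(\cosh k)^{-1}|z|^{-1}(\log T)^{A}$, and integrating by parts once in $\tau$. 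Only then, using $T\leqslant X^{1/2}$ from~\eqref{assumption}, does the initial-range contribution collapse to $N^{1/2}+N^{1/2}T^{2}X^{-1/2}+NXT^{-\ell}\ll N^{1/2}T$. In your proposal this entire argument is asserted rather than carried out.

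A secondary issue concerns your tail estimate. You propose to bound the $l+m\geqslant 1$ remainder using $\int_{\R}t^{2}|\hat\varphi(t)|/|\sinh\pi t|\,dt\ll T$, but that quantity is actually $O(1)$ (as $|\hat\varphi(t)|\ll e^{-t/T}$ and $1/|\sinh\pi t|\asymp e^{-\pi|t|}$) and is in any case not what needs bounding: the higher-order terms carry the factors $\Gamma(1+l+it)^{-1}\Gamma(1+m+it)^{-1}\asymp e^{\pi|t|}|t|^{-1-l-m}$, whose exponential growth exactly cancels $1/\sinh\pi t$. The paper factors these out via Stirling, which turns the $t$-integral into $\int t^{1-l-m}e^{-t/T}\,dt\ll T$ for $l+m\geqslant 1$; this is where the factor $T$ rather than $T^{3}$ comes from, and your proposal does not account for it correctly.
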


Therefore, after the truncation of the initial range $N(c) \leq 4\pi^{2} N(n)$ in $\mathcal{S}_{n}(\psi)$, the problem reduces to bounding the second moment of $\mathcal{S}_{n}^{\flat}(\tilde{\psi})$. It behoves us to mention that the condition $N(c) > 4\pi^{2} N(n)$ in the truncated sum $\mathcal{S}_{n}^{\flat}(\tilde{\psi})$ is equivalent to $|z| < 1$ with $z = 2\pi \overline{n}/c$.

\begin{proof}
We start with the treatment of the tail range $N(c) > 4\pi^{2} N(n)$ and show that the~sum over this range gives rise to $\mathcal{S}_{n}^{\flat}(\tilde{\psi})$. As in~\cite[p.790]{Koyama2001}, the Bessel function can be expressed as
\begin{equation*}
\mathcal{J}_{it}(z) = \left|\frac{z}{2} \right|^{2it} \sum_{l = 0}^{\infty} \sum_{m = 0}^{\infty} 
\frac{(-1)^{l+m}}{l! m!} \Gamma(1+l+it)^{-1} \Gamma(1+m+it)^{-1} 
\left(\frac{z}{2} \right)^{2l} \left(\frac{\overline{z}}{2} \right)^{2m},
\end{equation*}
which follows from a power series expansion of the $J$-Bessel functions. Hence, it holds that
\begin{equation}\label{omega-2}
\psi(z) = \sum_{l = 0}^{\infty} \sum_{m = 0}^{\infty} \frac{(-1)^{l+m}}{l! m!} 
\left(\frac{z}{2} \right)^{2l} \left(\frac{\overline{z}}{2} \right)^{2m} \int_{-\infty}^{\infty} \frac{it^{2} \hat{\varphi}(t)}{\sinh(\pi t)} 
\left|\frac{z}{2} \right|^{2it} \Gamma(1+l+it)^{-1} \Gamma(1+m+it)^{-1} dt.
\end{equation}
The first term associated to $l = m = 0$ in the double sum would make the biggest contribution to $\mathcal{S}_{n}(\psi)$. In fact, the replacement of $\psi(z)$ with $\tilde{\psi}(z)$ is ensured by Stirling's formula, namely
\begin{equation*}
\Gamma(1+l+it) \Gamma(1+m+it) = 2\pi |t|^{1+l+m} e^{-\pi|t|+2it(\log|t|-1)}(1+O(|t|^{-1}))
\end{equation*}
for fixed $l, m \geq 0$. Bounding the integrand in~\eqref{omega-2} with absolute value, we thus find that~the sum over $l+m \geq 1$ is estimated as
\begin{equation*}
\ll \sum_{l+m \geq 1} \frac{1}{l! m!} \left(\frac{z}{2} \right)^{2l} \left(\frac{\overline{z}}{2} \right)^{2m} 
\int_{0}^{\infty} t^{1-l-m} e^{-\pi t} \frac{e^{-\frac{t}{T}}}{\sinh(\pi t)} dt
\ll |z|^{2} \sum_{l+m \geq 1} \frac{4^{-l-m}}{l! m!} \int_{0}^{\infty} e^{-\frac{t}{T}} dt 
\ll T|z|^{2}.
\end{equation*}
This implies that
\begin{equation*}
\sum_{N(c) > 4\pi^{2} N(n)} \frac{\mathcal{S}(n, n; c)}{N(c)} \psi \left(\frac{2\pi \overline{n}}{c} \right)
 = \mathcal{S}_{n}^{\flat}(\tilde{\psi})+O_{\epsilon}(N^{\frac{1}{2}+\epsilon} T),
\end{equation*}
where we use the Weil bound~\eqref{Weil-bound} for Gaussian Kloosterman sums.

We next handle the initial range $N(c) \leq 4\pi^{2} N(n)$. It suffices to confirm that the sum over this range is bounded by $O_{\epsilon}(N^{1/2+\epsilon} T^{1+\epsilon})$. For an arbitrarily large $\ell > 0$, the goal is to~prove
\begin{equation}\label{omega-bound}
\psi(z) \ll |z|^{-\frac{1}{2}}+T^{2+\epsilon} X^{-\frac{1}{2}} |z|^{-1}+T^{-\ell} X|z|^{2}.
\end{equation}
Hereafter, we use the symbol $\ell$ in the same manner unless otherwise specified. The integral representation~\eqref{integral-representation-for-psi}, the definition~\eqref{I(x)}, and the approximation~\eqref{approximated-cosh} then conduce to
\begin{equation}\label{omega-3}
\psi(z) = \frac{8}{\pi^{2}} \int_{0}^{\pi/2} \cos(2|z| \cos \vartheta \sin \tau) 
\int_{-\infty}^{\infty} t^{2} \cosh((\pi+2i\beta)t) K_{2it}(2|z| \cos \tau) dt d\tau+O(|z|^{-\frac{1}{2}}).
\end{equation}
Following Gradshteyn--Ryzhik~\cite[6.795.1]{GradshteynRyzhik2007}, we calculate the inner integral over $t$ as
\begin{align*}
&\int_{-\infty}^{\infty} t^{2} \cosh((\pi+2i\beta)t) K_{2it}(2|z| \cos \tau) dt\\
& = -\frac{\pi}{8} \frac{\partial^{2}}{\partial k^{2}} \exp(-2|z| \cos \tau \cosh k)\\
& = -\frac{\pi}{4}(2(|z| \cos \tau \sinh k)^{2}-|z| \cos \tau \cosh k) \exp(-2|z| \cos \tau \cosh k),
\end{align*}
where $k = \beta-\pi i/2$. Note that
\begin{equation*}
\cosh k = -i \sinh \beta, \qquad \sinh k = -i \cosh \beta
\end{equation*}
so that
\begin{equation}\label{sinh-cosh-asymptotics}
|\cosh k|^{2} \sim |\sinh k|^{2} \asymp X, \qquad \Re(\cosh k) \asymp T^{-1} \sqrt{X},
\end{equation}
where $2\beta = \log X+i/T$. Hence, the first term on the right-hand side of~\eqref{omega-3} boils down~to
\begin{equation}
 -\frac{2}{\pi} \int_{0}^{\pi/2} (2|z|^{2} \cos^{2} \tau \sinh^{2} k-|z| \cos \tau \cosh k) 
 \cos(2|z| \cos \vartheta \sin \tau) \exp(-2|z| \cos \tau \cosh k) d\tau.
\end{equation}
We decompose the $\tau$-integral into two parts $\cos \tau > \Re(\cosh k)^{-1} |z|^{-1}(\log T)^{A}$ and otherwise. The integral in the first range can be evaluated by bounding the integrand in absolute value:
\begin{equation}\label{bounding-in-absolute-value}
|z|^{2} \cos^{2} \tau \sinh^{2} k \exp(-2|z| \cos \tau \cosh k) \ll T^{-\ell} X|z|^{2},
\end{equation}
where we use~\eqref{sinh-cosh-asymptotics} and the fact that the exponential is $O(T^{-\ell})$. The bound~\eqref{bounding-in-absolute-value} coincides with the third term on the right-hand side of~\eqref{omega-bound}. When $\cos \tau \leq \Re(\cosh k)^{-1} |z|^{-1}(\log T)^{A}$, integration by parts\footnote{The exponential is piecewise monotonic so that the derivative is piecewise of~constant sign.} once in $\tau$ leads to the second term $O_{\epsilon}(T^{2+\epsilon} X^{-1/2} |z|^{-1})$. These bounds in tandem with~\eqref{trivial-bound} leads one to the desired claim~\eqref{omega-bound}. We eventually sum up the terms $\mathcal{S}(n, n; c)/N(c)$ over $N(z) \geq 1$, choose $\ell$ large, and use the Weil bound~\eqref{Weil-bound} again,~obtaining
\begin{equation*}
\sum_{N(c) \leq 4\pi^{2} N(n)} \frac{\mathcal{S}(n, n; c)}{N(c)} \psi \left(\frac{2\pi \overline{n}}{c} \right) 
\preccurlyeq \sqrt{N}+\sqrt{N} T^{2} X^{-\frac{1}{2}}+NXT^{-\ell} \ll \sqrt{N} T.
\end{equation*}
The final bound follows from~\eqref{assumption}. This concludes the proof of Theorem~\ref{truncated}.
\end{proof}

We undertake to represent $\tilde{\psi}(z)$ in terms of the $K$-Bessel function of order zero. This~allows one to replace $\mathcal{S}_{n}^{\flat}(\tilde{\psi})$ with a finite sum of such $K$-Bessel functions up to an admissible error term. The following lemma clarifies an approximation of the weight function $\tilde{\psi}(z)$ for small~$z$.
\begin{lemma}\label{truncated-2}
Keep the notation and assumptions as above. Then we have that
\begin{equation}\label{omega-tilde-expressed-by-K-Bessel-function-of-order-zero}
\tilde{\psi}(z) = \frac{iM^{2} X|z|^{2}}{2\pi} K_{0}(M \sqrt{X}|z|)+O(X^{-\frac{1}{4}} |z|^{\frac{3}{2}}),
\end{equation}
where $M = \exp(-i(\pi-1/T)/2)$. This implies that
\begin{equation}\label{S-sharp}
\mathcal{S}_{n}(\psi) = \mathcal{S}_{n}^{\sharp}(K_{0})+O((NX)^{\frac{1}{2}+\epsilon}),
\end{equation}
where
\begin{equation*}
\mathcal{S}_{n}^{\sharp}(K_{0})
 = 2\pi iM^{2} N(n) X \sum_{C_{1} \leq N(c) \leq C_{2}} 
\frac{\mathcal{S}(n, n; c)}{N(c)^{2}} K_{0} \left(\frac{2\pi M |n| \sqrt{X}}{|c|} \right)
\end{equation*}
with $C_{1} = N(n)V(T \log T)^{-2}$ and $C_{2} = N(n) V$.
\end{lemma}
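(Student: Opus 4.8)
The plan is to establish the asymptotic expansion \eqref{omega-tilde-expressed-by-K-Bessel-function-of-order-zero} for $\tilde\psi(z)$ when $|z|<1$ and then feed it into the truncation already achieved in Lemma~\ref{truncated}. First I would start from the defining integral \eqref{omega-tilde}, substitute the approximation $\hat\varphi(t)=X^{it}e^{-t/T}+O(e^{-\pi t})$ from \eqref{Bessel-Kuznetsov}, and discard the $O(e^{-\pi t})$ contribution, which is harmless since $|z/2|^{2it}\Gamma(1+it)^{-2}$ decays like $e^{-\pi|t|}|t|^{-2}$; this leaves
\begin{equation*}
\tilde\psi(z)=\int_{\R}\frac{it^{2}}{\sinh\pi t}\,\Bigl|\frac{z}{2}\Bigr|^{2it}\,\Gamma(1+it)^{-2}\,X^{it}e^{-t/T}\,dt+O(\cdots).
\end{equation*}
Using \eqref{functional-equation-of-gamma} in the form $\Gamma(1+it)^{-2}=\Gamma(1-it)\Gamma(1+it)^{-1}\cdot\frac{\sinh\pi t}{\pi t}$, the factor $\sinh\pi t$ cancels, and after writing $\Gamma(1-it)/\Gamma(1+it)=e^{-2it\log|t|+2it+O(1/|t|)}$ via Stirling one is left with an oscillatory integral in $t$ whose phase is $t\bigl(\log(X^{1/2}|z|/2)-2\log|t|+2\bigr)$ together with an exponential damping $e^{-t/T}$. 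The main-term portion should be recognizable as an integral representation of the $K$-Bessel function: substituting $t\mapsto e^{u}$ (or comparing directly with the formula $K_{0}(w)=\frac12\int_{\R}e^{-w\cosh u}\,du$ after a contour shift) converts it into $c\cdot M^{2}X|z|^{2}K_{0}(X^{1/2}M|z|)$ with $M=\exp(-i(\pi/2-1/(2T)))$ carrying the phase $i/(2T)$ from $2\beta=\log X+i/T$, and the $|z|^{2}$ prefactor coming from the leading $t^{2}/(\pi t)\sim t/\pi$ behaviour near the saddle.

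Next I would control the error. The difference between the true integrand and its Stirling model contributes a factor $O(1/|t|)$; the region $|t|$ small is handled by the boundedness of the integrand there, and the region $|t|$ large by the $e^{-t/T}$ damping. A careful saddle-point (or repeated integration by parts) analysis of the oscillatory integral, exactly in the spirit of Koyama~\cite[pp.~790--792]{Koyama2001} and of the analogous two-dimensional computations, yields the secondary term of size $O(X^{-1/4}|z|^{3/2})$ --- the exponent $3/2$ reflecting one extra power of $|z|$ beyond the leading $|z|^{2}$ divided by the $X^{1/4}|z|^{1/2}$ gain from the stationary-phase normalisation, and the $X^{-1/4}$ matching the decay $K_{0}(w)\asymp w^{-1/2}e^{-\Re w}$ with $\Re w\asymp T^{-1}X^{1/2}|z|$. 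This establishes \eqref{omega-tilde-expressed-by-K-Bessel-function-of-order-zero}.

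Then I would assemble \eqref{S-sharp}. Starting from \eqref{S-flat}, insert \eqref{omega-tilde-expressed-by-K-Bessel-function-of-order-zero} with $z=2\pi\overline n/c$ into $\mathcal S_{n}^{\flat}(\tilde\psi)=\sum_{N(c)>4\pi^{2}N(n)}\frac{\mathcal S(n,n;c)}{N(c)}\tilde\psi(2\pi\overline n/c)$. The main term becomes $2iM^{2}N(n)X\sum_{N(c)>4\pi^{2}N(n)}\frac{\mathcal S(n,n;c)}{N(c)^{2}}K_{0}(2\pi X^{1/2}M|n|/|c|)$. Because $K_{0}(w)$ decays exponentially once $\Re w$ is a bit larger than $1$, i.e. once $X^{1/2}|n|/|c|\gg T^{-1}(\log T)$, the tail $N(c)>C_{2}=N(n)V$ is negligible (here $X\asymp V$), while for $N(c)\le C_{1}=N(n)V(T\log T)^{-2}$ the argument $w$ has $\Re w\gg(\log T)^{2}/T\cdot T=\log T$... more precisely one checks $\Re w\gg(\log T)$ there and the truncation costs only $O((NX)^{-\ell})$. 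Summing the $O(X^{-1/4}|z|^{3/2})$ error against the Weil bound \eqref{Weil's-bound}, $|\mathcal S(n,n;c)|\ll N(c)^{1/2+\epsilon}|n|$, over $N(c)>4\pi^{2}N(n)$ gives a convergent sum of size $\preccurlyeq N^{1/2}X^{-1/4}\cdot N^{3/4}\preccurlyeq (NX)^{1/2+\epsilon}$ after absorbing lower-order pieces, and combining with the $O(N^{1/2+\epsilon}T^{1+\epsilon})$ from Lemma~\ref{truncated} (which is $\ll(NX)^{1/2+\epsilon}$ under \eqref{assumption}) yields \eqref{S-sharp}.

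The main obstacle I anticipate is the precise stationary-phase analysis converting the damped oscillatory $t$-integral into the $K_{0}$-Bessel function with the sharp error $O(X^{-1/4}|z|^{3/2})$: one must track the phase $2\beta$ through to the parameter $M$, justify the interchange of the Stirling expansion with the integration uniformly in the relevant range of $|z|<1$, and make sure the saddle point (which lies at $|t|\asymp X^{1/2}|z|$, possibly small) is handled correctly when it is close to the origin. The bookkeeping of the truncation thresholds $C_{1},C_{2}$ in terms of the decay of $K_{0}$ is routine but must be done carefully to confirm that only an error of the claimed size $O((NX)^{1/2+\epsilon})$ is incurred.
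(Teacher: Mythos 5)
Your proposal takes a genuinely different route from the paper's proof, and the paper's route is substantially cleaner. The paper never invokes Stirling's formula or stationary phase. Instead it keeps the \emph{exact} form $\hat\varphi(t)=\sinh((\pi+2i\beta)t)/\sinh\pi t$ (not your approximation $X^{it}e^{-t/T}+O(e^{-\pi t})$), and uses the \emph{exact} functional equation $\Gamma(1+it)\Gamma(1-it)=\pi t/\sinh\pi t$ to cancel the $\sinh\pi t$ and the $t^2$ in the integrand. After the change of variable $s=1-it$, the $t$-integral in~\eqref{omega-tilde} is revealed to be a pair of Mellin--Barnes integrals, which are identified \emph{exactly} with $K_0$-Bessel functions via \cite[11.43.32]{GradshteynRyzhik2007}:
\begin{equation*}
\tilde\psi(z)=\frac{iM^2X|z|^2}{\pi}K_0(X^{1/2}M|z|)+\frac{iM^2X^{-1}|z|^2}{\pi}K_0(X^{-1/2}M|z|),
\end{equation*}
with no asymptotic error at this stage. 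The $O(X^{-1/4}|z|^{3/2})$ in~\eqref{omega-tilde-expressed-by-K-Bessel-function-of-order-zero} is then just the one-line estimate $|K_0(w)|\ll|w|^{-1/2}e^{-\Re w}$ applied to the second summand. This explains the precise exponents you were worried about deriving by saddle point: they are not the output of an asymptotic expansion but of an identity plus a trivial bound. The obstacles you foresee --- tracking Stirling's remainder, the saddle at $|t|\asymp X^{1/2}|z|$ migrating toward the origin as $|z|$ shrinks, and pinning down the sharp secondary term --- are precisely what the Mellin--Barnes identity makes vanish. Your approach could in principle succeed, but it would demand a uniform stationary-phase analysis over the full range $V^{-1/2}\ll|z|<1$ with a near-origin saddle, considerably more delicate than anything the paper needs.

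Two concrete slips to note. First, in summing $O(X^{-1/4}|z|^{3/2})$ over $N(c)>4\pi^2N(n)$ with $|z|\asymp N^{1/2}/N(c)^{1/2}$ against the Weil bound, one gets $X^{-1/4}N^{3/4}\sum_{N(c)>4\pi^2N}N(c)^{-5/4+\epsilon}\ll X^{-1/4}N^{1/2+\epsilon}$, not $N^{5/4}X^{-1/4}$; your intermediate expression would fail to be $\ll(NX)^{1/2+\epsilon}$ when $N\gg X$. Second, the tail $N(c)>C_2=N(n)V$ is \emph{not} controlled by exponential decay of $K_0$: there $|z|\ll V^{-1/2}$, so $\Re(X^{1/2}M|z|)\ll T^{-1}\ll 1$ and $K_0$ is of size $\asymp X^{-1/4}|z|^{-1/2}$ (or logarithmic). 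That tail is instead tamed by the polynomial decay of the $N(c)^{-2}$ prefactor in $\mathcal S_n^\sharp(K_0)$, and it contributes the full $(NX)^{1/2+\epsilon}$ --- it is not $O((NX)^{-\ell})$. The exponential suppression is only available in the other range $4\pi^2N(n)<N(c)\leqslant C_1$, where $\Re(X^{1/2}M|z|)\gg\log T$.
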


\begin{proof}
The formula~\eqref{functional-equation-of-gamma} yields
\begin{align}\label{omega-tilde-2}
\begin{split}
\tilde{\psi}(z) &= \frac{M^{2} X|z|^{2}}{8\pi^{2}} \int_{(1)} \Gamma(s)^{2} \left(\frac{M\sqrt{X} |z|}{2} \right)^{-2s} ds
 - \frac{M^{-2} X^{-1} |z|^{2}}{8\pi^{2}} \int_{(1)} \Gamma(s)^{2} \left(\frac{M^{-1} X^{-\frac{1}{2}} |z|}{2} \right)^{-2s} ds\\
& = \frac{iM^{2} X|z|^{2}}{2\pi} K_{0}(M\sqrt{X} |z|)-\frac{iM^{-2} X^{-1} |z|^{2}}{2\pi} K_{0}(M^{-1} X^{-\frac{1}{2}} |z|),
\end{split}
\end{align}
where we use~\cite[17.43.32]{GradshteynRyzhik2007}. From the inequality $|K_{0}(z)| \leq 2|z|^{-1/2} \exp(-\Re(z))$~for $\Re(z) \geq 0$ and $|z|$ bounded away from zero, it follows that
\begin{equation*}
|K_{0}(M^{-1} X^{-\frac{1}{2}} |z|)| \ll X^{\frac{3}{4}} |z|^{-\frac{1}{2}}
\end{equation*}
so that the second term in~\eqref{omega-tilde-2} is $O(X^{-1/4} |z|^{3/2})$. Summing this over $N(c) > 4\pi^{2} N(n)$~leads to $O_{\epsilon}(N^{1/2+\epsilon} X^{-1/4})$, which is absorbed into the error term in~\eqref{S-sharp}. Thus it remains to verify that the approximation~\eqref{omega-tilde-expressed-by-K-Bessel-function-of-order-zero} yields~\eqref{S-sharp}. To this end, we employ the bound
\begin{equation*}
X|z|^{2} |K_{0}(M\sqrt{X} |z|)| \ll X^{\frac{3}{4}} |z|^{\frac{3}{2}} \exp \left(-\frac{\sqrt{X} |z|}{100T} \right).
\end{equation*}
Similarly, summing this over $4\pi^{2} N(n) < N(c) < C_{1}$ and $N(c) > C_{2}$ gives rise to the error term $O_{\epsilon}((NX)^{1/2+\epsilon})$. The utilisation of Lemma~\ref{truncated} completes the proof of Lemma~\ref{truncated-2}.
\end{proof}

Lemma~\ref{truncated-2} reduces the problem to the analysis of the second moment of $\mathcal{S}_{n}^{\sharp}(K_{0})$, and one encounters an oscillatory integral of the shape
\begin{equation}\label{oscillatory-integral-K-Bessel}
\frac{1}{\Delta} \int_{V}^{V+\Delta} K_{0}(M \sqrt{X} z_{1}) \overline{K_{0}(M \sqrt{X} z_{2})} dX,
\end{equation}
where $z_{j} = 2\pi|n|/|c_{j}|$ for $j = 1, 2$. This is an analogue of the integral in~\cite[Lemma~4.6]{CherubiniGuerreiro2018}.~The trivial bound for the integral~\eqref{oscillatory-integral-K-Bessel} is $O((Vz_{1} z_{2})^{-1/2})$, which follows from the inequality
\begin{equation*}
|K_{0}(z)| \leq 2|z|^{-\frac{1}{2}} \exp(-\Re(z))
\end{equation*}
for $\Re(z) \geq 0$ and $|z|$ bounded away from zero. In the light of the range $C_{1} \leq N(c_{j}) \leq C_{2}$~for $j = 1, 2$, we impose the restriction $V^{-1/2} \leq z_{j} \ll 1$. On the other hand, the inequality
\begin{equation}\label{K-Bessel-order-zero-bound}
\left|\frac{d}{dz} e^{z} K_{0}(z) \right| \leq |z|^{-\frac{3}{2}}(1+|z|^{-1})
\end{equation}
for $\Re(z) \geq 0$ along with integration by parts leads to $O((z_{1} z_{2})^{-1/2} \Delta^{-1}|z_{1}-z_{2}|^{-1})$. Hence, we have shown the following bound for the oscillatory integral~\eqref{oscillatory-integral-K-Bessel}.
\begin{lemma}\label{Cherubini-Guerreiro-analogue}
Keep the notation and assumptions as above. Then we have for $z_{1}, z_{2} > 0$ with $V^{-1/2} \leq z_{j} \ll 1$ for $j = 1, 2$ that
\begin{equation}\label{oscillatory-integral-K-Bessel-2}
\frac{1}{\Delta} \int_{V}^{V+\Delta} K_{0}(M \sqrt{X} z_{1}) \overline{K_{0}(M \sqrt{X} z_{2})} dX 
\ll (z_{1} z_{2})^{-\frac{1}{2}} \min(V^{-\frac{1}{2}}, \Delta^{-1} |z_{1}-z_{2}|^{-1}).
\end{equation}
In particular, when $z_{j} = 2\pi|n|/|c_{j}|$, the right-hand side of~\eqref{oscillatory-integral-K-Bessel-2} is bounded by
\begin{equation}\label{special-case}
\sqrt{\frac{|c_{1} c_{2}|}{N(n)}} \min \left(V^{-\frac{1}{2}}, \frac{|c_{1} c_{2}| \Delta^{-1}}{|n|||c_{1}|-|c_{2}||} \right).
\end{equation}
\end{lemma}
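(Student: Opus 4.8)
The plan is to establish the bound on the oscillatory integral~\eqref{oscillatory-integral-K-Bessel} in two different ways and then combine them, using only elementary properties of $K_{0}$ recorded in the excerpt. First I would observe that, by the asymptotic inequality $|K_{0}(w)| \leqslant 2|w|^{-1/2} \exp(-\Re(w))$ valid for $\Re(w) \geqslant 0$ with $|w|$ bounded away from zero --- which applies here because $M = \exp(-i(\pi/2 - 1/(2T)))$ has $\Re(M) \asymp 1/T > 0$ and $X^{1/2} z_{j} \gg V^{1/2} \cdot V^{-1/2} = 1$ on the relevant range --- one has
\begin{equation*}
|K_{0}(X^{1/2} M z_{1}) \overline{K_{0}(X^{1/2} M z_{2})}| \ll (X z_{1} z_{2})^{-1/2} \exp(-c X^{1/2}(z_{1}+z_{2})/T)
\end{equation*}
for some $c > 0$. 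Integrating trivially over $X \in [V, V+\Delta]$ and using $X \asymp V$ gives the first bound $\ll (V z_{1} z_{2})^{-1/2} = (z_{1} z_{2})^{-1/2} V^{-1/2}$, which takes care of the diagonal-ish regime where $z_{1}$ and $z_{2}$ are close.

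For the second bound I would integrate by parts once in $X$. The natural device is to write $K_{0}(X^{1/2} M z_{j}) = e^{-X^{1/2} M z_{j}} \cdot (e^{X^{1/2} M z_{j}} K_{0}(X^{1/2} M z_{j}))$, so that the product of the two $K_{0}$'s carries an oscillatory/decaying factor $\exp(-X^{1/2} M(z_{1} - \overline{M}^{-1} \cdots))$ --- more precisely the combined exponential is $\exp(-X^{1/2}(M z_{1} + \overline{M} z_{2}))$, whose exponent has derivative in $X$ of size $\asymp X^{-1/2}(z_{1} - z_{2})$ in its oscillatory part after separating the (harmless, monotone) real part. Using the derivative bound~\eqref{K-Bessel-order-zero-bound}, namely $|\tfrac{d}{dw}(e^{w} K_{0}(w))| \leqslant |w|^{-3/2}(1 + |w|^{-1})$, to control the derivatives of the slowly-varying amplitudes, and noting that on our range $|X^{1/2} M z_{j}| \gg 1$ so the $|w|^{-1}$ correction is absorbed, integration by parts yields a gain of $\Delta^{-1}|z_{1} - z_{2}|^{-1}$ over the trivial size $(z_{1} z_{2})^{-1/2}$; the boundary terms are of the same (or smaller) order. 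This produces the second bound $\ll (z_{1} z_{2})^{-1/2} \Delta^{-1} |z_{1} - z_{2}|^{-1}$.

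Taking the minimum of the two estimates gives exactly~\eqref{oscillatory-integral-K-Bessel-2}. For the specialisation, I would simply substitute $z_{j} = 2\pi|n|/|c_{j}|$: then $(z_{1} z_{2})^{-1/2} = (|c_{1} c_{2}|/(2\pi)^{2} N(n))^{1/2} \asymp (|c_{1} c_{2}|/N(n))^{1/2}$, and
\begin{equation*}
|z_{1} - z_{2}| = 2\pi|n| \, \frac{\big||c_{2}| - |c_{1}|\big|}{|c_{1} c_{2}|} \asymp \frac{|n| \, |c_{1} - c_{2}|}{|c_{1} c_{2}|},
\end{equation*}
using $\big||c_{1}| - |c_{2}|\big| \leqslant |c_{1} - c_{2}|$ for the upper bound on the integral, which is all that is needed; hence $\Delta^{-1}|z_{1}-z_{2}|^{-1} \ll |c_{1} c_{2}|/(\Delta |n| \, |c_{1} - c_{2}|)$, and plugging in gives~\eqref{special-case}. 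The constraint $V^{-1/2} \leqslant z_{j} \ll 1$ is precisely the translation of $C_{1} < N(c_{j}) \leqslant C_{2}$ together with $N(n) \sim N$ and $\log T, (NV)^{\epsilon}$ factors being swept into $\preccurlyeq$.

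The main obstacle is the bookkeeping in the integration by parts: because $M$ is complex with a small real part of order $1/T$, one must carefully separate the monotone exponential damping $\exp(-\Re(M)(z_{1}+z_{2}) X^{1/2})$ from the oscillation $\exp(-i\Im(M)(z_{1} - z_{2})X^{1/2})$ and verify that differentiating the damping factor does not destroy the gain --- essentially one checks that the phase is piecewise monotone and its derivative is $\gg X^{-1/2}|z_{1} - z_{2}|$ off a negligible set, exactly as in the analogous Lemma~4.6 of~\cite{CherubiniGuerreiro2018}. The amplitude derivatives, handled via~\eqref{K-Bessel-order-zero-bound}, contribute lower-order terms on the range where $X^{1/2} z_{j} \gg 1$. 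Everything else is routine.
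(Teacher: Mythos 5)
Your two-pronged attack on the main estimate~\eqref{oscillatory-integral-K-Bessel-2} --- the trivial bound via $|K_{0}(w)| \leqslant 2|w|^{-1/2}\exp(-\Re(w))$, then one integration by parts exploiting $K_{0}(w) = e^{-w}\bigl(e^{w}K_{0}(w)\bigr)$ and the derivative bound~\eqref{K-Bessel-order-zero-bound} --- reproduces precisely the two sentences the paper offers in lieu of a proof. The bookkeeping you sketch (separating $\Re(M)\asymp T^{-1}$ from $\Im(M)\asymp 1$, checking $X^{1/2}z_{j}\geqslant 1$ to absorb the $|w|^{-1}$ correction, and dominating the boundary and integral terms by $(z_{1}z_{2})^{-1/2}\Delta^{-1}|z_{1}-z_{2}|^{-1}$) is sound.

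The specialisation step, however, contains a genuine direction error. With $z_{j} = 2\pi|n|/|c_{j}|$ one has exactly
\[
|z_{1}-z_{2}| = 2\pi|n|\,\frac{\big||c_{1}|-|c_{2}|\big|}{|c_{1}c_{2}|},
\]
and you then claim this is $\asymp |n|\,|c_{1}-c_{2}|/|c_{1}c_{2}|$, invoking $\big||c_{1}|-|c_{2}|\big| \leqslant |c_{1}-c_{2}|$ to conclude $\Delta^{-1}|z_{1}-z_{2}|^{-1} \ll |c_{1}c_{2}|\Delta^{-1}/(|n|\,|c_{1}-c_{2}|)$. But the reverse triangle inequality applied this way makes $|z_{1}-z_{2}|$ \emph{smaller}, hence $|z_{1}-z_{2}|^{-1}$ \emph{larger}: one obtains
\[
\Delta^{-1}|z_{1}-z_{2}|^{-1} = \frac{|c_{1}c_{2}|\Delta^{-1}}{2\pi|n|\,\big||c_{1}|-|c_{2}|\big|} \geqslant \frac{|c_{1}c_{2}|\Delta^{-1}}{2\pi|n|\,|c_{1}-c_{2}|},
\]
the reverse of what you assert, and there is no two-sided $\asymp$ (take $|c_{1}|=|c_{2}|$ with $c_{1}\neq c_{2}$ and the left side blows up). What the substitution into~\eqref{oscillatory-integral-K-Bessel-2} actually yields is $(|c_{1}c_{2}|/N(n))^{1/2}\min\bigl(V^{-1/2},\ |c_{1}c_{2}|\Delta^{-1}/(|n|\big||c_{1}|-|c_{2}|\big|)\bigr)$, with $\big||c_{1}|-|c_{2}|\big|$ rather than $|c_{1}-c_{2}|$ in the denominator. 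That is exactly the form in which the lemma is deployed later in display~\eqref{substitute}; the printed form of~\eqref{special-case} appears to be a typo, and the correct move is to replace $|c_{1}-c_{2}|$ by $\big||c_{1}|-|c_{2}|\big|$ in the statement rather than attempt to derive the printed version.
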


Lemma~\ref{Cherubini-Guerreiro-analogue} demonstrates that the weight function $K_{0}(M \sqrt{X} |z|)$ carries some oscillation in $X$ when we integrate over the interval $V \leq X \leq V+\Delta$.

\begin{remark}\label{Chatzakos-Cherubini-Laaksonen}
The author has learned from the work of Chatzakos--Cherubini--Laaksonen~\cite{ChatzakosCherubiniLaaksonen2022} that if the bound~\eqref{K-Bessel-order-zero-bound} is also employed in the range $C_{1} \leq N(c) \leq C_{2}$, we would deduce that
\begin{equation}\label{Chatzakos-Cherubini-Laaksonen-remark}
\mathcal{S}_{n}^{\sharp}(K_{0}) \preccurlyeq \sqrt{NTX}.
\end{equation}
Therefore, it follows that
\begin{equation}\label{Chatzakos-Cherubini-Laaksonen-remark-2}
\mathcal{S}_{n}(\psi) \preccurlyeq \sqrt{NTX}.
\end{equation}
This recovers a pointwise bound due to Koyama~\cite[p.792]{Koyama2001}. Via suitable optimisation~of~the parameters $N$ and $T$, we obtain the bound $E_{\Gamma}(X) \ll_{\epsilon} X^{11/7+\epsilon}$ conditionally under the mean-Lindel\"{o}f hypothesis. Unconditionally, a bound of the form $E_{\Gamma}(X) \ll_{\epsilon} X^{(11+4\eta)/(7+2\eta)+\epsilon}$ would follow immediately. Koyama has informed the author that he was able to deal with the case where $\eta \ne 0$, despite the \textit{ad hoc} assumption $\eta = 0$ in his work~\cite{Koyama2001}. The author would like~to appreciate his illuminating comments on the estimates~\eqref{Chatzakos-Cherubini-Laaksonen-remark} and~\eqref{Chatzakos-Cherubini-Laaksonen-remark-2}.
\end{remark}

\begin{proof}[Proof of Theorem~\ref{second-moment-of-sums-of-Kloosterman-sums}]
Our method dates back to the work of Cherubini--Guerreiro~\cite[\S4.1]{CherubiniGuerreiro2018}. Upon invoking the assumption~\eqref{assumption} and using Lemma~\ref{truncated-2}, the second moment of $\mathcal{S}_{n}(\psi)$ is approximated by the second moment of $\mathcal{S}_{n}^{\sharp}(K_{0})$ with an admissible error term, namely
\begin{equation}\label{truncated-3}
\frac{1}{\Delta} \int_{V}^{V+\Delta} |\mathcal{S}_{n}(\psi)|^{2} dX 
= \frac{1}{\Delta} \int_{V}^{V+\Delta} |\mathcal{S}_{n}^{\sharp}(K_{0})|^{2} dX+O((NV)^{1+\epsilon}).
\end{equation}
From the definition of $\mathcal{S}_{n}^{\sharp}(K_{0})$, it follows that
\begin{equation}\label{dyadic-decomposition}
\frac{1}{\Delta} \int_{V}^{V+\Delta} |\mathcal{S}_{n}^{\sharp}(K_{0})|^{2} dX
\preccurlyeq (NV)^{2} \sup_{C_{1} \leq R \leq C_{2}} \frac{1}{\Delta} \int_{V}^{V+\Delta} \left|\sum_{N(c) \sim R} 
\frac{\mathcal{S}(n, n; c)}{N(c)^{2}} K_{0} \left(\frac{2\pi M |n| \sqrt{X}}{|c|} \right) \right|^{2} dX.
\end{equation}
Following Cherubini--Guerreiro~\cite{CherubiniGuerreiro2018}, we square out the sum over $c$ to bound~\eqref{dyadic-decomposition}~by
\begin{equation}\label{diagonal-off-diagonal}
(NV)^{2} \sup_{C_{1} \leq R \leq C_{2}} \sum_{\substack{N(c_{1}) \sim R \\ N(c_{2}) \sim R}} 
\frac{\mathcal{S}(n, n; c_{1}) \overline{\mathcal{S}(n, n; c_{2})}}{N(c_{1} c_{2})^{2}} 
\frac{1}{\Delta} \int_{V}^{V+\Delta} K_{0} \left(\frac{2\pi M |n| \sqrt{X}}{|c_{1}|} \right) 
\overline{K_{0} \left(\frac{2\pi M |n| \sqrt{X}}{|c_{2}|} \right)} dX.
\end{equation}
We decompose the double sum in~\eqref{diagonal-off-diagonal} into two parts $\Sigma_{d}$ and $\Sigma_{od}$, where $\Sigma_{d}$ is the diagonal contribution and $\Sigma_{od}$ is the off-diagonal contribution. The Weil bound~\eqref{Weil-bound} plays a crucial r\^{o}le in the estimations of both the contributions $\Sigma_{d}$ and $\Sigma_{od}$. We also appeal to the bound
\begin{equation*}
\sum_{N(c) \leq x} |(n, c)|^{2} d^{2}(c) \ll x(\log x)^{3} d(n).
\end{equation*}
The evaluation of $\Sigma_{d}$ relies on the first bound in the minimum in~\eqref{special-case}, getting
\begin{equation}\label{diagonal}
\Sigma_{d} \preccurlyeq (NV)^{\frac{3}{2}} \sum_{C_{1} \leq N(c) \leq C_{2}} 
\frac{|(n, c)|^{2} d^{2}(c)}{N(c)^{\frac{5}{2}}} \preccurlyeq T^{3},
\end{equation}
where the assumption $V^{-1/2} \leq z_{j} \ll 1$ in Lemma~\ref{Cherubini-Guerreiro-analogue} is satisfied. Regarding the off-diagonal contribution $\Sigma_{od}$, the interpolation of the two bounds in Lemma~\ref{Cherubini-Guerreiro-analogue} with exponents $(1-\lambda, \lambda)$ for some $0 < \lambda < 1$ leads to
\begin{equation}\label{interpolated}
\min(V^{-\frac{1}{2}}, \Delta^{-1} |z_{1}-z_{2}|^{-1}) 
\ll_{\lambda} \frac{\Delta^{-\lambda} V^{\frac{\lambda-1}{2}}}{|z_{1}-z_{2}|^{\lambda}}.
\end{equation}
Therefore, the substitution of $z_{j} = 2\pi|n|/|c_{j}|$ into~\eqref{interpolated} gives
\begin{equation}\label{substitute}
\begin{split}
\Sigma_{od} &\preccurlyeq (NV)^{2} \sup_{C_{1} \leq R \leq C_{2}} 
\sum_{\substack{N(c_{1}) \ne N(c_{2}) \\ R < N(c_{1}), N(c_{2}) \leq 2R}}
\frac{|\mathcal{S}(n, n; c_{1}) \mathcal{S}(n, n; c_{2})|}{N(c_{1} c_{2})^{2}} 
\left(\frac{|c_{1} c_{2}|}{NV} \right)^{\frac{1-\lambda}{2}} 
\left(\frac{|c_{1} c_{2}|^{\frac{3}{2}} (\Delta N)^{-1}}{||c_{1}|-|c_{2}||} \right)^{\lambda}\\
& \preccurlyeq_{\lambda} \Delta^{-\lambda} N^{\frac{3-\lambda}{2}} V^{\frac{3+\lambda}{2}} 
\sum_{\sqrt{R} < k_{1} \ne k_{2} \leq \sqrt{2R}} \frac{a_{k_{1}} a_{k_{2}}}{|k_{1}-k_{2}|^{\lambda}},
\end{split}
\end{equation}
where $k_{j} = |c_{j}|$ and
\begin{equation*}
a_{k} \coloneqq \sum_{|c| = k} \frac{|\mathcal{S}(n, n; c)|}{|c|^{\frac{7}{2}-\lambda}}.
\end{equation*}
We apply the Hardy--Littlewood--P\'{o}lya inequality~\cite[Theorem 381, p.288]{HardyLittlewoodPolya1934} in a special case, which asserts that if $0 < \lambda < 1$, $\lambda = 2(1-p^{-1})$ and $\{a_{r} \}$ is a sequence of nonnegative~numbers, then we have that\footnote{It is feasible to strengthen this bound. One can use a version of~\eqref{Hardy-Littlewood-Polya-inequality} with an explicit implicit constant proven by Carneiro--Vaaler~\cite[Corollary~7.2, (7.20)]{CarneiroVaaler2010}, or the extremal case of~\eqref{Hardy-Littlewood-Polya-inequality} with $(\lambda, p) = (1, 2)$, and a logarithmic correction proven by Li--Villavert~\cite{LiVillavert2011}.}
\begin{equation}\label{Hardy-Littlewood-Polya-inequality}
\sum_{k_{1} \ne k_{2}} \frac{a_{k_{1}} a_{k_{2}}}{|k_{1}-k_{2}|^{\lambda}} 
\ll_{\lambda} \left(\sum_{k} a_{k}^{p} \right)^{\frac{2}{p}}.
\end{equation}
Using the inequality~\eqref{Hardy-Littlewood-Polya-inequality} coupled with the Weil bound~\eqref{Weil-bound},~we~derive
\begin{align}\label{off-diagonal}
\begin{split}
\Sigma_{od} &\preccurlyeq \Delta^{-\lambda} N^{\frac{3-\lambda}{2}} V^{\frac{3+\lambda}{2}} 
\left(\sum_{k \sim \sqrt{R}} a_{k}^{\frac{2}{2-\lambda}} \right)^{2-\lambda}\\
&\ll \Delta^{-\lambda} N^{\frac{3-\lambda}{2}} V^{\frac{3+\lambda}{2}} 
\left(\sum_{c \ne 0} \frac{|(n, c)|^{2} d^{2}(c)}{N(c)^{1+\frac{1}{2(2-\lambda)}}} \right)^{2-\lambda}
\preccurlyeq \Delta^{-\lambda} N^{\frac{3-\lambda}{2}} V^{\frac{3+\lambda}{2}}
\end{split}
\end{align}
with the implicit constant dependent only on $\epsilon$ and $\lambda$. Since the error term in~\eqref{truncated-3} is~smaller than the bound in~\eqref{off-diagonal}, we eventually combine~\eqref{diagonal} and~\eqref{off-diagonal}, obtaining
\begin{equation*}
\frac{1}{\Delta} \int_{V}^{V+\Delta} |\mathcal{S}_{n}(\psi)|^{2} dX 
\preccurlyeq_{\lambda} \Delta^{-\lambda} N^{\frac{3-\lambda}{2}} V^{\frac{3+\lambda}{2}}+T^{3}.
\end{equation*}
Hence, balancing the right-hand side with $\lambda = 1-\epsilon$ concludes the proof of Theorem~\ref{second-moment-of-sums-of-Kloosterman-sums}.
\end{proof}

Immediate corollaries of Theorem~\ref{second-moment-for-sums-of-Kloosterman-sums} include the following.
\begin{corollary}\label{second-moment-of-spectral-side}
Keep the notation and assumptions as above. Then we have that
\begin{equation}\label{second-moment-of-spectral-side-2}
\frac{1}{\Delta} \int_{V}^{V+\Delta} \left|\sum_{j} \alpha_{j} \hat{\varphi}(t_{j}) |\lambda_{j}(n)|^{2} \right|^{2} dX
\preccurlyeq \Delta^{-1} NV^{2}+T^{4}.
\end{equation}
\end{corollary}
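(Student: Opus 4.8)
The plan is to feed the Kuznetsov formula (Theorem~\ref{Kuznetsov}) with the test function $\varphi$ from Section~\ref{behaviour-of-test-functions}, whose Bessel--Kuznetsov transform $\hat{\varphi}(t)$ satisfies~\eqref{Bessel-Kuznetsov}, and then compare $L^2$-norms over $[V, V+\Delta]$ of both sides of the relation $D+C=U+S$. By~\eqref{D}, the term $D$ equals $\sum_{j} \alpha_{j} \hat{\varphi}(t_{j}) |\lambda_{j}(n)|^{2}$, which is precisely the quantity appearing inside the absolute value on the left of~\eqref{second-moment-of-spectral-side-2}; rearranging gives $D = U + S - C$, so by the triangle inequality in $L^2([V,V+\Delta], dX/\Delta)$ it suffices to bound the second moment of each of $U$, $S$, and $C$ separately.

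First I would dispose of $C$ and $U$ using the pointwise bounds~\eqref{C-U}: since $|C| \ll T(\log T)^2 d(n)^2$ uniformly in $X$, its contribution to the averaged second moment is $\ll T^{2+\epsilon} d(n)^4 \preccurlyeq T^{2}$, which is absorbed into $T^4$ (and is in fact smaller); and since $|U| \ll T^2$ uniformly, its contribution is $\ll T^4$, which is exactly the second term on the right of~\eqref{second-moment-of-spectral-side-2}. (Here I use that $d(n) \ll_\epsilon N(n)^\epsilon \preccurlyeq 1$ under the polynomial-growth assumption on $N$.) The one genuinely substantive input is the $S$-term: by~\eqref{S} we have $S = \mathcal{S}_n(\psi)$, and Theorem~\ref{second-moment-of-sums-of-Kloosterman-sums} gives exactly
\[
\frac{1}{\Delta} \int_V^{V+\Delta} |\mathcal{S}_n(\psi)|^2 \, dX \preccurlyeq \Delta^{-1} N V^2 + T^3,
\]
and $T^3 \leqslant T^4$ since $T \gg V^\epsilon \gg 1$. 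Collecting the three contributions yields $\Delta^{-1}NV^2 + T^4$, as claimed.

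The only point needing a little care is the passage from $D = U+S-C$ to an $L^2$ bound: one must check that $\hat{\varphi}(t_j)$ decays fast enough in $t_j$ for $D$ to be an absolutely convergent spectral sum and for the Kuznetsov formula to apply — this is guaranteed by~\eqref{Bessel-Kuznetsov}, which gives $\hat{\varphi}(t) = X^{it}e^{-t/T} + O(e^{-\pi t})$, hence $\hat{\varphi}(t) \ll (1+|t|)^{-3-\epsilon}$ is comfortably satisfied (indeed with exponential decay), so the hypotheses of Theorem~\ref{Kuznetsov} hold. I would also note that the harmonic weights $\alpha_j$ from~\eqref{harmonic-weights} are exactly what convert $\sum_j |\nu_j(n)|^2 \hat{\varphi}(t_j)$ into the form appearing in~\eqref{D}, so no separate manipulation is needed there. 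There is no real obstacle here: all the hard analytic work has already been done in Theorem~\ref{second-moment-of-sums-of-Kloosterman-sums} and in the pointwise estimates~\eqref{C-U}, and the corollary is essentially bookkeeping — the $T^3$ from the Kloosterman side gets subsumed by the $T^4$ from the identity term $U$, which is the dominant ``diagonal'' contribution.
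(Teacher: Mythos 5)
Your proof is correct and follows essentially the same route as the paper: apply the Kuznetsov formula with the test function $\hat{\varphi}$, use the pointwise bounds~\eqref{C-U} on $C$ and $U$ to reduce the second moment of $D$ to that of $S = \mathcal{S}_n(\psi)$ plus an $O(T^4)$ term, and then invoke Theorem~\ref{second-moment-of-sums-of-Kloosterman-sums}. The paper states this in two lines; your elaboration of the triangle-inequality bookkeeping and the verification of the decay hypotheses on $\hat{\varphi}$ is exactly the right justification.
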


\begin{proof}
The Kuznetsov formula (Theorem~\ref{Kuznetsov}) together with the bound~\eqref{C-U} leads to
\begin{equation}\label{second-moment-of-spectral-side-3}
\frac{1}{\Delta} \int_{V}^{V+\Delta} \left|\sum_{j} \alpha_{j} \hat{\varphi}(t_{j}) |\lambda_{j}(n)|^{2} \right|^{2} dX 
\ll \frac{1}{\Delta} \int_{V}^{V+\Delta} |\mathcal{S}_{n}(\psi)|^{2} dX+T^{4}.
\end{equation}
The claim follows from the incorporation of~\eqref{second-moment-of-spectral-side-3} with Theorem~\ref{second-moment-of-sums-of-Kloosterman-sums}.
\end{proof}


\subsection{Reduction to Moments}\label{reduction-to-moments}
Let $h: (0, \infty) \to \R$ be a smooth compactly supported~test function with holomorphic Mellin transform $\tilde{h}: \C \to \C$. In our case, we choose $h$ such~that it is supported in some dyadic box $[\sqrt{N}, \sqrt{2N}]$ whose derivatives satisfy
\begin{equation}\label{support}
|h^{(\ell)}(\xi)| \ll N^{-\frac{\ell}{2}}, \qquad \ell = 0, 1, 2, \dots
\end{equation}
and whose mean value is
\begin{equation*}
\int_{-\infty}^{\infty} h(\xi) \xi d\xi = \tilde{h}(2) = N.
\end{equation*}
Integrating by parts $\ell$-times and using~\eqref{support}, one sees that
\begin{equation*}
\tilde{h}(s) = \frac{(-1)^{\ell}}{s(s+1) \cdots (s+\ell-1)} 
\int_{0}^{\infty} h^{(\ell)}(x) x^{s+\ell} \frac{dx}{x} \ll_{\sigma, \ell} N^{\frac{\sigma}{2}}(|s|+1)^{-\ell}
\end{equation*}
with the implicit constant depending continuously on $\sigma$ for $s = \sigma+it$. We remark that~this is meant for $s$ outside the set $\{0, -1, -2, \dots \}$, but it holds even at these exceptional points.

Following~\cite{BalogBiroHarcosMaga2019,LuoSarnak1995} and recalling~\eqref{Rankin--Selberg}, we then contemplate the spectral-arithmetic average
\begin{align*}
\sum_{j} \alpha_{j} \hat{\varphi}(t_{j}) \sum_{n \in \mathcal{O}^{\ast}} h(|n|) |\lambda_{j}(n)|^{2}
& = \sum_{j} \sum_{n \in \mathcal{O}^{\ast}} \hat{\varphi}(t_{j}) h(|n|) |\nu_{j}(n)|^{2} \\
& = \sum_{j} \alpha_{j} \hat{\varphi}(t_{j}) \int_{(3)} 
\tilde{h}(s) \frac{\zeta_{K}(s/2)}{\zeta_{K}(s)} L \left(\frac{s}{2}, \mathrm{sym}^{2} u_{j} \right) \frac{ds}{2\pi i}.
\end{align*}
Making a change of variables and shifting the contour with crossing a simple pole at $s = 1$, we have for some absolute constant $c$ that
\begin{equation*}
\sum_{j} \alpha_{j} \hat{\varphi}(t_{j}) \sum_{n \in \mathcal{O}^{\ast}} h(|n|) |\lambda_{j}(n)|^{2}
 = cN \sum_{j} \hat{\varphi}(t_{j})+\sum_{j} \alpha_{j} \hat{\varphi}(t_{j}) \int_{(1/2)} 
\tilde{h}(2s) \frac{\zeta_{K}(s)}{\zeta_{K}(2s)} L(s, \mathrm{sym}^{2} u_{j}) \frac{ds}{\pi i},
\end{equation*}
where we use~\eqref{harmonic-weights}. Therefore, the smoothed spectral exponential sum $\sum_{j} X^{it_{j}} e^{-t_{j}/T}$ equals
\begin{multline}\label{spectral-arithmetic-average}
\frac{1}{cN} \sum_{n \in \mathcal{O}^{\ast}} h(|n|) \sum_{j} \alpha_{j} \hat{\varphi}(t_{j}) |\lambda_{j}(n)|^{2}
 - \frac{1}{cN} \int_{(1/2)} \tilde{h}(2s) \frac{\zeta_{K}(s)}{\zeta_{K}(2s)} 
\sum_{j} \alpha_{j} \hat{\varphi}(t_{j}) L(s, \mathrm{sym}^{2} u_{j}) \frac{ds}{\pi i}+O(1)\\
 = \frac{1}{cN} \sum_{n \in \mathcal{O}^{\ast}} h(|n|) \mathcal{S}_{n}(\psi)
 - \frac{1}{cN} \int_{(1/2)} \tilde{h}(2s) \sum_{j} \frac{t_{j}}{\sinh(\pi t_{j})} 
\hat{\varphi}(t_{j}) L(s, u_{j} \otimes u_{j}) \frac{ds}{\pi i}+O(T^{2}),
\end{multline}
where the sum over $j$ on the right-hand side represents the first moment of Rankin--Selberg $L$-functions. We stress that the spectral weights $\hat{\varphi}(t_{j})$ depend on the parameters $X, T \geq 1$.

The mean-Linde\"{o}f hypothesis was first formulated in the celebrated work of Koyama~\cite{Koyama2001}.
\begin{conjecture}[Mean-Lindel\"{o}f hypothesis]\label{mean-Lindelof}
Assume that $w = 1/2+i\tau$ with $\tau \in \R$. The symmetric square $L$-function attached to a Hecke--Maa{\ss} cusp form for $\Gamma = \PSL_{2}(\mathcal{O})$ satisfies
\begin{equation*}
\sum_{t_{j} \sim T} |L(w, \mathrm{sym}^{2} u_{j})|^{2} \ll_{\epsilon} |w|^{A} T^{3+\epsilon}
\end{equation*}
for some $A > 0$.
\end{conjecture}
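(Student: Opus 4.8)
The natural line of attack is the Kuznetsov--formula method of Iwaniec and Luo--Sarnak transplanted to the Picard group; I will carry it through and flag where it stalls, which is exactly why Conjecture~\ref{mean-Lindelof} is stated as a hypothesis rather than proved. First I would normalise. By the mild variation~\eqref{mild} of the harmonic weights $\alpha_{j}$ it suffices to bound $\sum_{t_{j} \sim T} \alpha_{j} |L(w, \sym^{2} u_{j})|^{2}$ up to a factor $T^{\epsilon}$, and it is convenient to replace the sharp condition $t_{j} \sim T$ by a fixed even test function $h = h_{T}$ obeying the hypotheses of Theorem~\ref{Kuznetsov}, non-negative with $h \gg 1$ on $[T, 2T]$ and rapid decay off that window; write $\hat{h}$ for its Bessel--Kuznetsov transform. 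Since $L(s, \sym^{2} u_{j})$ has degree three over $K = \Q(i)$ with archimedean factor built from $\Gamma_{\C}(s)$ and $\Gamma_{\C}(s \pm 2it_{j})$, its analytic conductor at $w = \tfrac{1}{2} + i\tau$ is $\asymp (1+|\tau|)^{O(1)} (1+t_{j})^{4}$. An approximate functional equation for the central square --- using that the Hecke eigenvalues are real, so $\overline{L(w, \sym^{2} u_{j})} = L(\bar{w}, \sym^{2} u_{j})$ --- then expresses $|L(w, \sym^{2} u_{j})|^{2}$, up to a negligible tail, as a Dirichlet polynomial $\sum_{N(\mathfrak{n}) \leqslant (|w| T)^{O(1)}} \beta_{j}(\mathfrak{n}) N(\mathfrak{n})^{-1/2} W(N(\mathfrak{n}))$ of length $\leqslant |w|^{O(1)} T^{4}$, with $\beta_{j}(\mathfrak{n}) = \sum_{\mathfrak{n}_{1}\mathfrak{n}_{2} = \mathfrak{n}} c_{j}(\mathfrak{n}_{1}) \overline{c_{j}(\mathfrak{n}_{2})}$ and $c_{j}(n) = \sum_{\ell^{2} k = n} \lambda_{j}(k^{2})$ the coefficients of $L(s, \sym^{2} u_{j})$.

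Next I would linearise. By the Hecke multiplicativity~\eqref{multiplicativity} of the symmetric-square system, each $\beta_{j}(\mathfrak{n})$ is a linear combination --- with divisor-bounded, $j$-independent coefficients $\gamma(\mathfrak{m})$ --- of the individual Hecke eigenvalues $\lambda_{j}(\mathfrak{m})$ for $N(\mathfrak{m}) \leqslant |w|^{O(1)} T^{4}$. Summing against $\alpha_{j} \hat{h}(t_{j})$ and applying the Kuznetsov formula of Theorem~\ref{Kuznetsov} to each $\lambda_{j}(\mathfrak{m}) = \lambda_{j}(\mathfrak{m}) \overline{\lambda_{j}((1))}$ splits the result into three pieces. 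The diagonal term $U$ survives only for the trivial configuration $\mathfrak{m} = (1)$, and collapsing the surviving sum it contributes
\begin{equation*}
\ll \Big( \int_{-\infty}^{\infty} t^{2} h(t)\, dt \Big) \sum_{N(\mathfrak{l}) \leqslant (|w|T)^{O(1)}} \frac{d(\mathfrak{l})^{O(1)}}{N(\mathfrak{l})} \ll |w|^{O(1)} T^{3+\epsilon},
\end{equation*}
which is precisely the main term predicted by the conjecture. The continuous-spectrum term $C$ is controlled by $|\zeta_{K}(1+it)|^{-1} \ll \log(2+|t|)$ together with the divisor bound for $\sigma_{it}$, and contributes no more than the diagonal. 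There remains the off-diagonal term $S$, a weighted sum $\sum_{c} \mathcal{S}(1, \mathfrak{m}; c) N(c)^{-1} \psi(2\pi\sqrt{\mathfrak{m}}/c)$ of Gaussian Kloosterman sums whose integral transform $\psi$ --- analysed exactly as in Lemmata~\ref{truncated}--\ref{Cherubini-Guerreiro-analogue}, via the power-series expansion of the Bessel kernel and stationary phase --- is essentially supported on $N(c) \ll_{\epsilon} N(\mathfrak{m})^{1/2+\epsilon} T^{-2}$, with amplitude $\ll_{\epsilon} T^{2+\epsilon}$ there.

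The hard part --- and the reason Conjecture~\ref{mean-Lindelof} is left unproved --- is this off-diagonal term. Inserting only the Weil bound~\eqref{Weil's-bound}, $|\mathcal{S}(1, \mathfrak{m}; c)| \ll N(c)^{1/2+\epsilon}$, and summing over the full length $N(\mathfrak{m}) \leqslant |w|^{O(1)} T^{4}$ of the approximate functional equation produces only $\ll |w|^{O(1)} T^{4+\epsilon}$, overshooting the target by a factor $T$; the same loss afflicts the spectral large-sieve inequality $\sum_{t_{j} \sim T} \alpha_{j} \big| \sum_{N(\mathfrak{n}) \leqslant N} a_{\mathfrak{n}} \lambda_{j}(\mathfrak{n}) \big|^{2} \ll_{\epsilon} (T^{3} + N) (TN)^{\epsilon} \sum_{\mathfrak{n}} |a_{\mathfrak{n}}|^{2}$ when it is applied with the length-$T^{4}$ polynomial (it is sharp for the first moment, taken with $N \sim T^{2}$, but lossy for this ``fourth moment of eigenvalues''). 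To reach $T^{3+\epsilon}$ one needs genuine cancellation in the $c$-sum beyond Weil's bound --- morally a Linnik--Selberg-type estimate over $\Q(i)$, or the recognition of $S$ as a shifted convolution sum re-estimable by a second application of a Kloosterman summation formula in the spirit of~\cite{Iwaniec1984}, or a Motohashi/spectral-reciprocity identity over $\Q(i)$ expressing this second moment through moments of $\zeta_{K}$ as in~\cite{Nelson2019}. None of these inputs is presently available in the required strength, so here the estimate is only assumed, its consequences for $E_{\Gamma}(X)$ being drawn in Section~\ref{applications-and-pointwise-bounds}.
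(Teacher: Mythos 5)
You correctly recognize that the statement is posed as a conjecture precisely because no proof is currently available, and your account of the obstruction is accurate: after an approximate functional equation the relevant Dirichlet polynomial has length of order $T^{4}$ (in the spectral aspect), and applying either the Weil bound~\eqref{Weil's-bound} to the resulting Kloosterman term or the spectral large sieve of Proposition~\ref{Watt} at that length yields only the convexity bound $T^{4+\epsilon}$, a full power of $T$ short of the target. This matches the paper's own discussion following Proposition~\ref{Watt}, where it is noted that the critical range $N(n) \asymp T^{2}$ together with the observation that the symmetric-square coefficients $c_{j}(n)$ behave like $\nu_{j}(n^{2})$ forces $N \sim T^{4}$, so that the large sieve cannot overstride the barrier for $\eta$; no further argument is offered in the paper because none is known, which is why $\eta$ is carried as a parameter (with the current record $\eta = 1/2$ from~\cite{BalkanovaChatzakosCherubiniFrolenkovLaaksonen2019}) rather than set to zero. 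Your proposal is thus not a proof but a correct diagnosis, and it is consistent with how the paper treats the statement.
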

\begin{remark}
The polynomial dependence in $w$ suffices as in Conjecture~\ref{mean-Lindelof}. In the $\tau$-aspect,~one may specify the exponent $A$ to obtain hybrid subconvexity (say, $|w|^{3+\epsilon}$ would be reasonable).
\end{remark}
Let $0 \leq \eta \leq 1$ denote the additional exponent of $T$ for the mean value of Rankin--Selberg $L$-functions on the critical line. More precisely, we define the quantity $\eta$ by
\begin{equation}\label{mean-value-estimate}
\sum_{t_{j} \sim T} \frac{t_{j}}{\sinh(\pi t_{j})} |L(w, u_{j} \otimes u_{j})| \ll_{\epsilon} |w|^{A} T^{3+\eta+\epsilon}
\end{equation}
so that
\begin{equation}\label{mean-value-estimate-2}
\sum_{t_{j} \sim T} |L(w, \mathrm{sym}^{2} u_{j})|^{2} \ll_{\epsilon} |w|^{A} T^{3+2\eta+\epsilon}.
\end{equation}
The second bound follows from the Cauchy--Schwarz inequality, the convex bound $\zeta_{\Q(i)}(w) \ll |w|^{1/2+\epsilon}$ and the Hoffstein--Lockhart bound $|\nu_{j}(1)| \ll t_{j}^{\epsilon}$. The convex bound for the left-hand side of~\eqref{mean-value-estimate} is $O_{\epsilon}(T^{4+\epsilon})$, while the mean-Lindel\"{o}f hypothesis states that $\eta = 0$. The~current record is due to Balkanova--Frolenkov~\cite[Theorem 1.2]{BalkanovaFrolenkov2022} that permits $\eta = 2 \theta$. Using Nelson's subconvex bounds~\cite[Theorem 1.1]{Nelson2020}, one can take $\eta = 1/3$. Koyama~\cite{Koyama2001} has shown that~the mean-Lindel\"{o}f hypothesis leads to $\delta = 11/7$. Note that if the Generalised Lindel\"{o}f Hypothesis
\begin{equation}\label{Lindelof}
|L(w, u_{j} \otimes u_{j})| \ll_{\epsilon} |w t_{j}|^{\epsilon} \frac{\sinh(\pi t_{j})}{t_{j}},
\end{equation}
was assumed, then we would obtain $\delta = 3/2$ via the Weil bound~\eqref{Weil-bound}. This statement is an analogue of the heuristic in~\cite[p.139]{Iwaniec1984} for the conjectural exponent $2/3$ in two dimensions. Refining the work of Koyama~\cite{Koyama2001}, we can make his result unconditional, namely
\begin{equation*}
E_{\Gamma}(X) \ll_{\epsilon} X^{\frac{11+4\eta}{7+2\eta}+\epsilon}
\end{equation*}
and
\begin{equation}\label{freedom}
S(T, X) \ll_{\epsilon} T^{\frac{7+2\eta}{4}+\epsilon} X^{\frac{1}{4}+\epsilon}+T^{2}.
\end{equation}

We are in a position to estimate the smoothed spectral exponential sum in square mean.
\begin{theorem}\label{smoothing}
Keep the notation and assumptions as above. For any $\epsilon > 0$, we have that
\begin{equation}\label{square-mean-integral-1}
\frac{1}{\Delta} \int_{V}^{V+\Delta} \left|\sum_{j} X^{it_{j}} e^{-\frac{t_{j}}{T}} \right|^{2} dX \ll_{\epsilon} V^{\epsilon} \times 
	\begin{cases}
	T^{5} \min(\Delta^{-1} V, T) & \text{if $0 < T \leq V^{\frac{1}{5+2\eta}}$},\\
	\Delta^{-1} V^{\frac{3}{2}} T^{\frac{5}{2}+\eta} & \text{if $V^{\frac{1}{5+2\eta}} < T \leq (\Delta^{-2} V^{3})^{\frac{1}{3-2\eta}}$},\\
	T^{4} & \text{if $(\Delta^{-2} V^{3})^{\frac{1}{3-2\eta}} < T \leq \sqrt{V}$}.
	\end{cases}
\end{equation}
In particular, the left-hand side can uniformly be bounded by $O_{\epsilon}(\Delta^{-1} T^{5/2+\eta} V^{3/2+\epsilon})$.
\end{theorem}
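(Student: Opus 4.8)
The proof rests on the spectral identity~\eqref{spectral-arithmetic-average}, which for any admissible $h$ supported in $[\sqrt{N},\sqrt{2N}]$ with $\tilde{h}(2)=N$ expresses $\sum_{j}X^{it_{j}}e^{-t_{j}/T}$ as $(cN)^{-1}$ times the weighted Kloosterman sum $\sum_{n}h(|n|)\mathcal{S}_{n}(\psi)$, minus $(cN)^{-1}$ times the first-moment integral $\int_{(1/2)}\tilde{h}(2s)\sum_{j}\frac{t_{j}}{\sinh\pi t_{j}}\hat{\varphi}(t_{j})L(s,u_{j}\otimes u_{j})\,\frac{ds}{\pi i}$ of Rankin--Selberg $L$-functions, plus an error $O(T^{2})$; the alternative form of~\eqref{spectral-arithmetic-average} involving $\sum_{j}\alpha_{j}\hat{\varphi}(t_{j})|\lambda_{j}(n)|^{2}$ and $L(s,\sym^{2}u_{j})$ will also be used. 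The plan is to bound the short-interval second moment of each of the two genuine pieces, combine, and optimise in the free parameter $N$, falling back on a direct estimate when $T$ is small.

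For the Kloosterman piece there are $\asymp N$ Gaussian integers with $N(n)\sim N$ and $h(|n|)\ll 1$, so Cauchy--Schwarz gives $N^{-2}|\sum_{n}h(|n|)\mathcal{S}_{n}(\psi)|^{2}\le N^{-1}\sum_{N(n)\sim N}|\mathcal{S}_{n}(\psi)|^{2}$; integrating over $[V,V+\Delta]$ and applying Theorem~\ref{second-moment-of-sums-of-Kloosterman-sums} term by term yields $\preccurlyeq\Delta^{-1}NV^{2}+T^{3}$. Running the same reduction through the other form of~\eqref{spectral-arithmetic-average} and invoking Corollary~\ref{second-moment-of-spectral-side} instead gives the variant $\preccurlyeq\Delta^{-1}NV^{2}+T^{4}$, which is the efficient one in the large-$T$ range.

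For the Rankin--Selberg piece, the rapid decay $\tilde{h}(1+2i\tau)\ll N^{1/2}(1+|\tau|)^{-\ell}$ lets me extract $N^{-1/2}$ and dispose of the $\tau$-integral at the cost of $V^{\epsilon}$ (Cauchy--Schwarz, $\ell$ large), reducing to $N^{-1}\sup_{\tau}\frac{1}{\Delta}\int_{V}^{V+\Delta}\big|\sum_{j}\frac{t_{j}}{\sinh\pi t_{j}}\hat{\varphi}(t_{j})L(\tfrac12+i\tau,u_{j}\otimes u_{j})X^{it_{j}}\big|^{2}\,dX$. Writing $\hat{\varphi}(t_{j})=X^{it_{j}}e^{-t_{j}/T}+O(e^{-\pi t_{j}})$ by~\eqref{Bessel-Kuznetsov} (the tail negligible by~\eqref{mean-value-estimate}) and opening the square, the relevant $X$-integral of $X^{2i(t_{j}-t_{k})}$ is $\ll\min(1,V/(\Delta|t_{j}-t_{k}|))$. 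The diagonal is $\sum_{j}\big(\tfrac{t_{j}}{\sinh\pi t_{j}}\big)^{2}|L(\tfrac12+i\tau,u_{j}\otimes u_{j})|^{2}e^{-2t_{j}/T}$; rewriting $\tfrac{t_{j}}{\sinh\pi t_{j}}L(\cdot,u_{j}\otimes u_{j})=\tfrac{\zeta_{K}(\cdot)}{\zeta_{K}(2\cdot)}|\nu_{j}(1)|^{2}L(\cdot,\sym^{2}u_{j})$, with the prefactor $\ll|\tau|^{O(1)}t_{j}^{\epsilon}$ by the convex bound for $\zeta_{K}$ and $\alpha_{j}=|\nu_{j}(1)|^{2}$, this is $\preccurlyeq T^{3+2\eta}$ by~\eqref{mean-value-estimate-2}. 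The off-diagonal is split into dyadic spectral blocks, the cross terms bounded by AM--GM within a block and by the $\ell^{1}$ estimate~\eqref{mean-value-estimate} across blocks, and the count of $k$ with $|t_{j}-t_{k}|\le V/\Delta$ controlled by the local Weyl law~\eqref{unit-intervals} (density $\ll t_{j}^{2}$); this gives $\preccurlyeq\Delta^{-1}VT^{5+2\eta}$, so together with the trivial $\ell^{1}$ bound $\preccurlyeq T^{6+2\eta}$ the inner second moment is $\preccurlyeq T^{5+2\eta}\min(\Delta^{-1}V,T)$ and the Rankin--Selberg piece contributes $\preccurlyeq N^{-1}T^{5+2\eta}\min(\Delta^{-1}V,T)$.

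Assembling, $\frac{1}{\Delta}\int_{V}^{V+\Delta}\big|\sum_{j}X^{it_{j}}e^{-t_{j}/T}\big|^{2}\,dX\preccurlyeq\Delta^{-1}NV^{2}+T^{4}+N^{-1}T^{5+2\eta}\min(\Delta^{-1}V,T)$ for every admissible $N\gg1$ (the $T^{4}$ coming from squaring the $O(T^{2})$ in~\eqref{spectral-arithmetic-average}); independently, opening the square without the Kuznetsov formula and using the same $X$-integral bound with Weyl's law and $\sum_{j}e^{-t_{j}/T}\ll T^{3}$ gives $\preccurlyeq T^{5}\min(\Delta^{-1}V,T)$. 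For $T\le V^{1/(5+2\eta)}$ the balanced choice of $N$ would be $<1$, so one keeps the direct bound $T^{5}\min(\Delta^{-1}V,T)$; for $V^{1/(5+2\eta)}<T\le(\Delta^{-2}V^{3})^{1/(3-2\eta)}$ one takes $N=T^{(5+2\eta)/2}V^{-1/2}$, which balances the first and last terms at $\Delta^{-1}V^{3/2}T^{5/2+\eta}$ (dominating $T^{4}$ there); for $T>(\Delta^{-2}V^{3})^{1/(3-2\eta)}$ one uses the Corollary~\ref{second-moment-of-spectral-side} variant with $N=T^{1+2\eta}\Delta^{-1}V$, forcing both the Rankin--Selberg piece and the residual $\Delta^{-1}NV^{2}=\Delta^{-2}V^{3}T^{1+2\eta}$ down to $T^{4}$. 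The uniform bound follows since in each case the stated quantity is $\le\Delta^{-1}T^{5/2+\eta}V^{3/2+\epsilon}$ under~\eqref{assumption}. I expect the genuine difficulty to be the off-diagonal analysis of the spectral exponential sum $\sum_{j}b_{j}X^{it_{j}}$: one must simultaneously use the oscillation of $X^{i(t_{j}-t_{k})}$ over the short interval, the eigenvalue density from Weyl's law on unit intervals, and the Rankin--Selberg mean values~\eqref{mean-value-estimate}--\eqref{mean-value-estimate-2}, taking care that the weight $e^{-t_{j}/T}$ legitimately truncates both the $j$- and the $k$-sums when passing from within-block to across-block contributions; the remaining steps --- the identity, the Cauchy--Schwarz reductions, the Mellin bookkeeping, and the optimisation in $N$ --- are routine once this is in hand.
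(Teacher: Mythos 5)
Your proposal is correct and follows essentially the same route as the paper: start from the identity~\eqref{spectral-arithmetic-average}, apply Cauchy--Schwarz in $n$ to pass to $N^{-1}\sum_n|\cdot|^2$, bound the spectral--arithmetic piece via Corollary~\ref{second-moment-of-spectral-side} (equivalently Theorem~\ref{second-moment-of-sums-of-Kloosterman-sums} plus the squared $O(T^2)$ error), bound the Rankin--Selberg piece by dyadic blocking in the spectral parameter together with the $X$-integral estimate $\ll\min(1,V/(\Delta|t_j-t_k|))$, the local Weyl law~\eqref{unit-intervals}, and the mean value bound~\eqref{mean-value-estimate-2}, then optimise in $N$ with a separate direct argument when $T\le V^{1/(5+2\eta)}$. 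The only substantive variations from the paper's proof are cosmetic: the paper applies AM--GM $|L_jL_k|\le\frac12(|L_j|^2+|L_k|^2)$ \emph{before} separating diagonal and off-diagonal (so it never splits them), and it fixes $N=V^{-1/2}T^{5/2+\eta}$ throughout the range $T>V^{1/(5+2\eta)}$ rather than readjusting $N$ in the third range; also, your Rankin--Selberg bound $N^{-1}T^{5+2\eta}\min(\Delta^{-1}V,T)$ retains the $\min$ where the paper keeps only $N^{-1}T^{5+2\eta}\Delta^{-1}V$ — a strictly stronger intermediate statement, but one that does not change the final casewise bound since $T^4$ already absorbs the gain in the range where $\min=T$.
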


\begin{proof}
We start with the identity~\eqref{spectral-arithmetic-average} where we replace the second occurrence of $\hat{\varphi}(t_{j})$~with $X^{it_{j}} e^{-t_{j}/T}$ up to an admissible error term. In addition, we restrict the integration to~$|\Im(s)| \leq T^{\epsilon}$. Using the support condition on the test function $h$, we obtain the bound
\begin{multline}\label{smoothened}
\left|\sum_{j} X^{it_{j}} e^{-\frac{t_{j}}{T}} \right|^{2} 
\ll_{\epsilon} 1+\frac{1}{N} \sum_{N \leq N(n) \leq 2N} 
\left|\sum_{j} \alpha_{j} \hat{\varphi}(t_{j}) |\lambda_{j}(n)|^{2} \right|^{2}\\
 + \frac{T^{\epsilon}}{N} \int_{-T^{\epsilon}}^{T^{\epsilon}} 
\left|\sum_{j} \alpha_{j} X^{it_{j}} e^{-\frac{t_{j}}{T}} L \left(\frac{1}{2}+i\tau, \mathrm{sym}^{2} u_{j} \right) \right|^{2} d\tau.
\end{multline}
Abbreviating
\begin{equation*}
L_{j}(\tau) \coloneqq L \left(\frac{1}{2}+i\tau, \mathrm{sym}^{2} u_{j} \right),
\end{equation*}
we average the left-hand side of~\eqref{smoothened} over $V \leq X \leq V+\Delta$. Applying Corollary~\ref{second-moment-of-spectral-side} to the sum over $n$ in~\eqref{smoothened}, one sees that
\begin{multline}\label{average-1}
\frac{1}{\Delta} \int_{V}^{V+\Delta} \left|\sum_{j} X^{it_{j}} e^{-\frac{t_{j}}{T}} \right|^{2} dX
\ll_{\epsilon} (\Delta^{-1} NV^{2}+T^{4})(NTV)^{\epsilon}\\
 + \frac{T^{\epsilon}}{\Delta N} \int_{|\tau| \leq T^{\epsilon}} 
 \int_{V}^{V+\Delta} \left|\sum_{j} \alpha_{j} X^{it_{j}} e^{-\frac{t_{j}}{T}} L_{j}(\tau) \right|^{2} dX d\tau.
\end{multline}
We appeal to the Cauchy--Schwarz inequality for the sum over $j$. In particular, one distributes the spectral parameters $t_{j}$ into intervals of length $T$, deducing
\begin{equation*}
\left|\sum_{j} \alpha_{j} X^{it_{j}} e^{-\frac{t_{j}}{T}} L_{j}(\tau) \right|^{2}
\ll \sum_{m = 1}^{\infty} m^{2} \left|\sum_{(m-1) T \leq t_{j} < mT} \alpha_{j} X^{it_{j}} e^{-\frac{t_{j}}{T}} L_{j}(\tau) \right|^{2}.
\end{equation*}
If we define
\begin{equation}\label{J}
J(T, V, \Delta; m, \tau) \coloneqq \frac{1}{\Delta} \int_{V}^{V+\Delta} 
\left|\sum_{(m-1)T \leq t_{j} < mT} \alpha_{j} X^{it_{j}} e^{-\frac{t_{j}}{T}} L_{j}(\tau) \right|^{2} dX,
\end{equation}
then it follows that
\begin{equation}\label{sup}
\frac{1}{\Delta} \int_{V}^{V+\Delta} \left|\sum_{j} X^{it_{j}} e^{-\frac{t_{j}}{T}} \right|^{2} dX 
\ll_{\epsilon} (\Delta^{-1} NV^{2}+T^{4})(NTV)^{\epsilon}
 + \frac{T^{\epsilon}}{N} \sup_{|\tau| \leq T} \sum_{m = 1}^{\infty} m^{2} J(T, V, \Delta; m, \tau).
\end{equation}
The estimation of $J(T, V, \Delta; m, \tau)$ is executed by opening the square, integrating explicitly~in $X$, and using the estimate~\eqref{mild} for the harmonic weights $\alpha_{j}$. This procedure leads to
\begin{align*}
J(T, V, \Delta; m, \tau) &\ll_{\epsilon} \Delta^{-1} T^{\epsilon} V e^{-2m} 
\sum_{(m-1)T \leq t_{j}, t_{k} < mT} \frac{|L_{j}(\tau) L_{k}(\tau)|}{1+|t_{j}-t_{k}|}\\ 
&\leq \frac{\Delta^{-1} T^{\epsilon} V e^{-2m}}{2} 
\sum_{(m-1)T \leq t_{j}, t_{k} < mT} \frac{|L_{j}(\tau)|^{2}+|L_{k}(\tau)|^{2}}{1+|t_{j}-t_{k}|}\\
& = \Delta^{-1} T^{\epsilon} V e^{-2m} \sum_{(m-1)T \leq t_{j} < mT} |L_{j}(\tau)|^{2} 
\sum_{(m-1)T \leq t_{k} < mT} \frac{1}{1+|t_{j}-t_{k}|}.
\end{align*}
The Weyl law~\eqref{unit-intervals} on unit intervals allows one to bound the last sum as
\begin{equation*}
\sum_{(m-1)T \leq t_{k} < mT} \frac{1}{1+|t_{j}-t_{k}|} 
\leq \sum_{\ell = 1}^{[T]} \sum_{\substack{(m-1)T \leq t_{k} < mT \\ \ell-1 \leq |t_{j}-t_{k}| < \ell}} 1 
\ll_{\epsilon} (mT)^{2+\epsilon} \sum_{\ell = 1}^{[T]} \frac{1}{\ell} 
\ll_{\epsilon} (mT)^{2+\epsilon},
\end{equation*}
where $(m-1)T \leq t_{j} < mT$. Therefore, we derive
\begin{equation}\label{J-2}
J(T, V, \Delta; m, \tau) \ll_{\epsilon} \Delta^{-1} (mT)^{2+\epsilon} V e^{-2m} \sum_{(m-1)T \leq t_{j} < mT} |L_{j}(\tau)|^{2}.
\end{equation}
We combine~\eqref{J-2} with the mean value estimate~\eqref{mean-value-estimate-2}, getting
\begin{equation*}
J(T, V, \Delta; m, \tau) \ll_{\epsilon} (mT)^{5+2\eta+\epsilon} e^{-2m} (1+|\tau|)^{A}.
\end{equation*}
Plugging this bound into~\eqref{sup} yields
\begin{align}\label{Balog-Biro-Harcos-Maga-analogue}
\begin{split}
\frac{1}{\Delta} \int_{V}^{V+\Delta} \left|\sum_{j} X^{it_{j}} e^{-\frac{t_{j}}{T}} \right|^{2} dX 
&\ll_{\epsilon} (\Delta^{-1} NV^{2}+T^{4})(NTV)^{\epsilon}+\frac{T^{5+2\eta+\epsilon} V}{\Delta N}\\
&\ll_{\epsilon} (\Delta^{-1} V^{\frac{3}{2}} T^{\frac{5}{2}+\eta}+T^{4})(TV)^{\epsilon},
\end{split}
\end{align}
where we optimise $N = V^{-1/2} T^{5/2+\eta}$. The estimate~\eqref{Balog-Biro-Harcos-Maga-analogue} corresponds to~\cite[Equation~(14)]{BalogBiroHarcosMaga2019} that improves upon the work of Cherubini--Guerreiro~\cite[Lemma 4.4]{CherubiniGuerreiro2018}. From our assumption on $N$, the bound~\eqref{Balog-Biro-Harcos-Maga-analogue} is valid if and only if $T > V^{1/(5+2\eta)}$. In the range $T \leq A^{1/(5+2\eta)}$,~we execute an evaluation of the smoothed second moment~\eqref{smoothened} in a repetitive fashion. In fact, we distribute the spectral parameters $t_{j}$ into intervals of length $T$, apply the Cauchy--Schwarz inequality for the sum over $m$, square out the various $j$-subsums, integrate the double sum over $t_{j}$ and $t_{k}$ in $X$, and use the Weyl law. Consequently, we conclude that
\begin{equation}\label{square-mean-integral-2}
\frac{1}{\Delta} \int_{V}^{V+\Delta} \left|\sum_{j} X^{it_{j}} e^{-\frac{t_{j}}{T}} \right|^{2} dX 
\ll \Delta^{-1} V \sum_{m \geq 1} m^{2} e^{-2m} \sum_{(m-1)T \leq t_{j}, t_{k} \leq mT} \frac{1}{1+|t_{j}-t_{k}|}
\ll_{\epsilon} \Delta^{-1} VT^{5+\epsilon}.
\end{equation}
On the other hand, if we use the trivial bound $S(T, X) \ll T^{3}$, then we find that the left-hand side of~\eqref{square-mean-integral-2} is $O(T^{6})$. Gathering these estimates together finishes the proof.
\end{proof}

It is beneficial to observe how Theorem~\ref{smoothing} improves upon the trivial bound. We consider
\begin{equation*}
\frac{1}{N} \sum_{n \in \mathcal{O}^{\ast}} h(|n|) \sum_{j} \alpha_{j} \hat{\varphi}(t_{j}) |\lambda_{j}(n)|^{2}
 = \frac{1}{N} \sum_{j} \sum_{n} h(|n|) \hat{\varphi}(t_{j}) |\nu_{j}(n)|^{2}
 = c \sum_{j} \hat{\varphi}(t_{j})+O(N^{-\frac{1}{2}} T^{3+\eta+\epsilon}).
\end{equation*}
We perform the square mean integral on both sides in view of Corollary~\ref{second-moment-of-spectral-side} and the Cauchy--Schwarz inequality to deduce under the assumption~\eqref{assumption} that
\begin{equation*}
\frac{1}{\Delta} \int_{V}^{V+\Delta} \left|\sum_{j} X^{it_{j}} e^{-\frac{t_{j}}{T}} \right|^{2} dX 
\ll (N \Delta^{-1} V^{2}+T^{4})(NTV)^{\epsilon}+\frac{T^{6+2\eta+\epsilon}}{N}.
\end{equation*}
We optimise $N = \sqrt{\Delta} T^{3+\eta} V^{-1}$ and hence the trivial bound for the left-hand side of~\eqref{square-mean-integral-1}~is $O_{\epsilon}(\Delta^{-1/2} V T^{3+\eta+\epsilon})$. Nonetheless, this estimate is justified only when $T > (\Delta^{-1/2} V)^{1/(3+\eta)}$.~If $V = \Delta$, then we have $O_{\epsilon}(\sqrt{V} T^{3+\eta+\epsilon})$, which is weaker than~\eqref{square-mean-integral-1}.

We replace the weight function $e^{-t_{j}/T}$ with a smooth characteristic function $\chi(t)$ of $[1, T]$. This yields the following second moment bound for $S(T, X)$.
\begin{corollary}\label{replacement}
Keep the notation and assumptions as above. For any $\epsilon > 0$, we have that
\begin{equation}\label{replacement-2}
\frac{1}{\Delta} \int_{V}^{V+\Delta} |S(T, X)|^{2} dX \ll V^{\epsilon} \times 
	\begin{cases}
	T^{5} \min(\Delta^{-1} V, T) & \text{if $0 < T \leq V^{\frac{1}{5+2\eta}}$},\\
	\Delta^{-1} V^{\frac{3}{2}} T^{\frac{5}{2}+\eta} & \text{if $V^{\frac{1}{5+2\eta}} < T \leq (\Delta^{-2} V^{3})^{\frac{1}{3-2\eta}}$},\\
	T^{4} & \text{if $(\Delta^{-2} V^{3})^{\frac{1}{3-2\eta}} < T \leq \sqrt{V}$}
	\end{cases}
\end{equation}
In particular, the left-hand side can uniformly be bounded by $O_{\epsilon}(\Delta^{-1} T^{5/2+\eta} V^{3/2+\epsilon})$.
\end{corollary}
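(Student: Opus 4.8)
The plan is to deduce \eqref{replacement-2} from Theorem~\ref{smoothing} by a smoothing followed by an unsmoothing; we may assume $T\gg1$, since $S(T,X)$ is empty when $T$ lies below the spectral gap. The proof of Theorem~\ref{smoothing} uses the spectral weight only through the facts that it is $O(1)$, essentially concentrated on $[1,T]$, and is the Bessel--Kuznetsov transform \eqref{Bessel-Kuznetsov-transform} of an admissible test function. Accordingly, I would fix a smooth even $\chi$ with $\chi\equiv1$ on $[1,T]$, $\supp\chi\subset[1/2,T+1]$, transition ranges of width $\asymp1$ and $|\chi^{(\ell)}|\ll_\ell1$, chosen so that---after the symmetrisation and the twist by $X^{it}$ already built into the Deshouillers--Iwaniec construction of \cite{DeshouillersIwaniec1986}---the even weight $t\mapsto X^{i|t|}\chi(|t|)$ is the transform of a test function $\varphi_\chi$ admissible in Theorem~\ref{Kuznetsov}: even, holomorphic in $|\Im t|<1/2+\epsilon$ and $\ll(1+|t|)^{-3-\epsilon}$ there (a transition of width $\asymp1$ being compatible with holomorphy in a strip of that width). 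Then I would rerun the proof of Theorem~\ref{smoothing} verbatim with $\hat\varphi$ replaced by $\hat\varphi_\chi$: its only inputs are Corollary~\ref{second-moment-of-spectral-side}, the spectral second moment \eqref{mean-value-estimate-2} and the unit-interval Weyl bound \eqref{unit-intervals}, none of which sees the particular choice of a bounded, $[1,T]$-concentrated weight. This yields the right-hand side of \eqref{square-mean-integral-1} as a bound for $\frac1\Delta\int_V^{V+\Delta}\bigl|\sum_jX^{it_j}\chi(t_j)\bigr|^2\,dX$.

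Unsmoothing is then immediate: $\chi$ and $\mathbf{1}_{[1,T]}$ differ only on $[1/2,1]\cup[T,T+1]$, where $\chi=O(1)$, so by \eqref{unit-intervals}
\[
S(T,X)=\sum_jX^{it_j}\chi(t_j)+O\bigl(\#\{t_j\in[1/2,1]\}+\#\{T<t_j\le T+1\}\bigr)=\sum_jX^{it_j}\chi(t_j)+O(T^2).
\]
Squaring and averaging over $X\in[V,V+\Delta]$, the error contributes $O(T^4)$, which is dominated by the right-hand side of \eqref{replacement-2} in each range: for $0<T\le V^{1/(5+2\eta)}$ one has $\min(\Delta^{-1}V,T)\ge1$ (since $\Delta\le V$ and $T\gg1$), so $T^5\min\ge T^5\ge T^4$; for $V^{1/(5+2\eta)}<T\le(\Delta^{-2}V^3)^{1/(3-2\eta)}$ the inequality $\Delta T^4\le V^{3/2}T^{5/2+\eta}$ is equivalent to $\Delta^2T^{3-2\eta}\le V^3$, which is exactly the range condition; and in the third range the claimed bound is $T^4$ itself. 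Hence \eqref{replacement-2} follows, and the uniform bound $\Delta^{-1}T^{5/2+\eta}V^{3/2+\epsilon}$ under \eqref{assumption} is read off from the three cases exactly as for Theorem~\ref{smoothing}.

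I expect the first step---producing $\varphi_\chi$ and checking that its Kloosterman-side contribution still satisfies Theorem~\ref{second-moment-of-sums-of-Kloosterman-sums}---to be the main obstacle. Concretely one must revisit Lemmata~\ref{truncated}, \ref{truncated-2} and \ref{Cherubini-Guerreiro-analogue} and confirm that the integral transform $\psi(z)$ built from $\hat\varphi_\chi$ retains the decay exploited there; this hinges only on $\chi$ being $O(1)$, supported on $[1,T]$, with $\|\chi\|_{1}\ll T$---the very features it shares with $e^{-t/T}\mathbf{1}_{[1,T]}$---so the oscillatory-integral analysis of Section~\ref{second-moment-of-sums-of-Kloosterman-sums} should transfer with only cosmetic changes. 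A variant avoiding the construction of $\varphi_\chi$ is to write $\chi(t)=\int_{\mathbb{R}}\rho(v)\,e^{-te^{v}/T}\,dv$ as a superposition of exponential weights at scales $Te^{-v}$ and to integrate the bound of Theorem~\ref{smoothing} (monotone in the parameter up to constants, so the scales $|v|=O(1)$ dominate) against $\rho$, supplemented by the trivial bound $S(u,X)\ll u^3$ at the extreme scales $u<V^{\epsilon}$ and $u>V^{1/2}$; for this $\rho$ must be a genuine rapidly decaying function, which imposes a mild Gevrey-type smoothness on $\chi$ so that its Mellin transform outpaces $1/\Gamma$ on vertical lines.
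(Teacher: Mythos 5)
Your unsmoothing step — estimating $S(T,X)-\sum_j\chi(t_j)X^{it_j}$ by the unit-interval Weyl bound and showing the resulting $O(T^4)$ is dominated by the claimed right-hand side in each of the three ranges — is correct and matches what the paper does. However, neither of your two routes for bounding the smoothed sum $\sum_j\chi(t_j)X^{it_j}$ actually closes, and the paper's own route is genuinely different from both.

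The paper's proof \emph{keeps the test function $\varphi$ and its exponential weight $e^{-t/T}$ fixed}, and unsmooths on the $X$-side rather than the $t$-side: it writes $\hat g$ as the Fourier transform of $g(\xi)e^{\xi/T}$, so that by Fourier inversion
\[
g(t_j)\,X^{it_j}=\int_{|\xi|\leqslant 1}\hat g(\xi)\,(Xe^{-2\pi\xi})^{it_j}\,e^{-t_j/T}\,d\xi+O(\cdot),
\]
and hence $\sum_jg(t_j)X^{it_j}$ is expressed as a compactly supported $\xi$-integral of $k(T,X,\xi)=\sum_j(Xe^{-2\pi\xi})^{it_j}e^{-t_j/T}$. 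Crucially this leaves the parameter $T$ and the exponential weight untouched, so Theorem~\ref{smoothing} applies directly inside the integral (the shifted interval $[Ve^{-2\pi\xi},(V+\Delta)e^{-2\pi\xi}]$ still has $V\asymp V'$, $\Delta\asymp\Delta'$), and the extra tail cost from $\hat g$ is controlled by $\hat g(\xi)\ll\min(T,|\xi|^{-1})$ and the choice $\delta=T^{-2}$. Nothing in Section~\ref{second-moment-of-sums-of-Kloosterman-sums} needs to be touched.

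Your first variant — constructing a new admissible $\varphi_\chi$ whose transform is $X^{i|t|}\chi(|t|)$ and rerunning Section~\ref{attacks-on-the-second-moment} — has a genuine gap where you flag it. You claim the oscillatory analysis ``transfers with cosmetic changes'' because it only uses $\chi=O(1)$, support in $[1,T]$, and $\|\chi\|_1\ll T$. But Lemma~\ref{truncated-2} does not use such soft information: it exploits the \emph{exact} form $\hat\varphi(t)=X^{it}e^{-t/T}$ to evaluate $\tilde\psi(z)$ in closed form as $\frac{iM^2X|z|^2}{\pi}K_0(X^{1/2}M|z|)+O(\cdot)$ via a Mellin--Barnes identity, and the entire second-moment bound (Lemma~\ref{Cherubini-Guerreiro-analogue}, the diagonal/off-diagonal split, the Hardy--Littlewood--P\'olya step) is built on oscillatory properties of \emph{that} $K_0$-kernel. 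For a generic admissible weight $\chi$ the integral $\tilde\psi(z)=\frac{i}{\pi}\int_{\R}t\,\chi(t)\,(X|z/2|^2)^{it}\frac{\Gamma(1-it)}{\Gamma(1+it)}\,dt$ has no comparable closed form, and the machinery would have to be rebuilt. That is not cosmetic.

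Your second variant — writing $\chi(t)=\int_{\R}\rho(v)e^{-te^v/T}\,dv$ and integrating the bound of Theorem~\ref{smoothing} over $v$ — fails for a structural reason. Substituting $w=e^v/T$ this is a Laplace transform $\chi(t)=\int_0^\infty\sigma(w)e^{-tw}\,dw$. A Laplace transform of any integrable (or rapidly decaying) $\sigma$ is analytic in $\Re(t)>0$; if it vanishes on an interval it vanishes identically. So a smooth $\chi$ that is $\equiv1$ on $[1,T]$ and compactly supported cannot have such a representation, and no amount of Gevrey smoothness of $\chi$ helps: in Mellin language you would need $\tilde\chi(s)/\Gamma(s)$ to decay on vertical lines, which forces $\tilde\chi$ to decay exponentially, i.e.\ $\chi$ to be analytic in a sector — incompatible with compact support. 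The paper's device — pushing the smoothing into the $X$-variable via a Fourier (not Laplace) integral against the \emph{compactly supported} kernel $\hat g$ truncated to $|\xi|\leqslant1$ — is precisely what avoids both of these obstacles.
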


\begin{proof}
We choose a smooth test function $g(\xi)$ such that $\supp(g) \subseteq [1/2, T+1/2]$,~$0 \leq g(\xi) \leq 1$ and $g(\xi) = 1$ for $1 \leq \xi \leq T$. The Weyl law $\# \{j: U \leq t_{j} \leq U+1 \} \ll U^{2}$ then yields
\begin{equation*}
S(T, X) = \sum_{j} g(t_{j}) X^{it_{j}}+O(T^{2}).
\end{equation*}
We use the less standard notation $\hat{g}(t)$ for the Fourier transform of $g(\xi) e^{\xi/T}$, deducing
\begin{align*}
\int_{|t| \geq 1} \hat{g}(t) e(-ty) dt
& = -\frac{1}{\pi} \int_{-\infty}^{\infty} \frac{d}{d\xi} \left(g(\xi) e^{\frac{\xi}{T}} \right) 
\int_{1}^{\infty} \sin(2\pi t(\xi-y)) \frac{dt d\xi}{t}\\
&\ll \frac{1}{1+|y|}+\frac{1}{1+|T-y|}+\frac{\log(T+|y|)}{T}.
\end{align*}
The Fourier inversion implies that
\begin{equation}\label{inversion}
g(t) e^{t/T} = \int_{|\xi| \leq 1} \hat{g}(\xi) e(-\xi t) d\xi+O \left(\frac{1}{1+|t|}+\frac{1}{1+|T-t|}+\frac{\log(T+|t|)}{T} \right).
\end{equation}
For notational convenience, we set
\begin{equation*}
k(T, X, \xi) \coloneqq \sum_{j} (Xe^{-2\pi \xi})^{it_{j}} e^{-\frac{t_{j}}{T}}.
\end{equation*}
It then follows that
\begin{equation*}
\sum_{j} g(t_{j}) X^{it_{j}} = \int_{|\xi| \leq 1} \hat{g}(\xi) k(T, X, \xi) d\xi+O \left(\sum_{j} \left(\frac{e^{-\frac{t_{j}}{T}}}{t_{j}}
 + \frac{e^{\frac{t_{j}}{T}}}{1+|T-t_{j}|}+\frac{\log(T+t_{j}) e^{-\frac{t_{j}}{T}}}{T} \right) \right),
\end{equation*}
where the error term is $O(T^{2})$ by the Weyl law. We therefore deduce that
\begin{align}\label{split}
\begin{split}
\frac{1}{\Delta} \int_{V}^{V+\Delta} |S(T, X)|^{2} dX 
&\ll \frac{1}{\Delta} \int_{V}^{V+\Delta} \left|\int_{|\xi| \leq 1} \hat{g}(\xi) k(T, X, \xi) d\xi \right|^{2} dX+T^{4}\\
&\ll \frac{1}{\Delta} \int_{V}^{V+\Delta} \left|\int_{|\xi| \leq \delta} \hat{g}(\xi) k(T, X, \xi) d\xi \right|^{2} dX\\
& + \frac{1}{\Delta} \int_{V}^{V+\Delta} \left|\int_{\delta < |\xi| \leq 1} \hat{g}(\xi) k(T, X, \xi) d\xi \right|^{2} dX+T^{4}
\end{split}
\end{align}
with a parameter $\delta > 0$ at our disposal. As for the first (resp. second) term on the right-hand side of~\eqref{split}, we use Theorem~\ref{smoothing}, the Cauchy--Schwarz inequality in $\xi$, and the first (resp. second) bound in $\hat{g}(t) \ll \min(T, |t|^{-1})$. The desired claim follows if we choose $\delta = T^{-2}$.
\end{proof}

\subsection{Completion of the Proofs}\label{completion-of-the-proofs}
We aim to establish Theorem~\ref{second-moment} by bounding~the second moment of $S(T, X)$. It would be beneficial to reproduce an argument pursuant to the work~of Cherubini--Guerreiro~\cite{CherubiniGuerreiro2018}. To start with, we invoke Lemma~\ref{Nakasuji}. By partial summation, the spectral sum over $|t_{j}| \leq T$ in~\eqref{explicit-formula} can be expressed as twice the real part of
\begin{equation*}
\sum_{t_{j} \leq T} \frac{X^{1+it_{j}}}{1+it_{j}}
 = X \frac{S(T, X)}{1+iT}+iX \int_{1}^{T} \frac{S(U, X)}{(1+iU)^{2}} dU.
\end{equation*}
By Corollary~\ref{replacement} together with a repeated application of the Cauchy--Schwarz inequality, the average over $V \leq X \leq V+\Delta$ is bounded as
\begin{align*}
\frac{1}{\Delta} \int_{V}^{V+\Delta} \left|\sum_{t_{j} \leq T} \frac{X^{1+it_{j}}}{1+it_{j}} \right|^{2} dX 
&\ll \frac{V^{2}}{\Delta T^{2}} \int_{V}^{V+\Delta} |S(T, X)|^{2} dX
 + \frac{V^{2} \log T}{\Delta} \int_{1}^{T} \int_{V}^{V+\Delta} |S(U, X)|^{2} \frac{dX dU}{U^{3}}\\
&\ll_{\epsilon} \Delta^{-1} T^{\frac{1}{2}+\eta} V^{\frac{7}{2}+\epsilon}.
\end{align*}
It therefore follows that
\begin{equation}\label{final-bound}
\frac{1}{\Delta} \int_{V}^{V+\Delta} |E_{\Gamma}(X)|^{2} dX 
\ll_{\epsilon} \Delta^{-1} T^{\frac{1}{2}+\eta} V^{\frac{7}{2}+\epsilon}+\frac{V^{4}}{T^{2}}(\log V)^{2},
\end{equation}
where the second term comes from the error term in Lemma~\ref{Nakasuji}. We estimate the right-hand side of~\eqref{final-bound} as
\begin{equation*}
O_{\epsilon}(V^{4-\frac{2}{5+2\eta}+\epsilon} \Delta^{-\frac{4}{5+2\eta}})
\end{equation*}
where we optimise $T = (\Delta^{2} V)^{1/(5+2\eta)}$ with the restriction $1 \ll \Delta \leq V^{(3+2\eta)/4}$ stemming from the initial assumption $V^{\epsilon} \ll T \leq \sqrt{V}$. This concludes the proof of~\eqref{main}. \qed\\

We establish the following sophisticated version instead of the second assertion~\eqref{main-smooth-explicit-formula},~since the substitution of $\eta = \theta = 0$ in Theorem~\ref{second-moment-2} immediately yields the bound~\eqref{main-smooth-explicit-formula}.
\begin{theorem}\label{second-moment-2}
Keep the notation as above. Assume that
\begin{equation*}
V^{\frac{3}{10}-\frac{8\theta}{5}} \leq \Delta \leq V^{\frac{19}{20}+\frac{\eta}{2}-\frac{8\theta}{5}}.
\end{equation*}
For any $\epsilon > 0$, we have that
\begin{equation}\label{second-moment-3}
\frac{1}{\Delta} \int_{V}^{V+\Delta} |E_{\Gamma}(X)|^{2} dX 
\ll_{\epsilon} V^{\frac{44+8\theta+16\eta(2+\theta)}{13+10\eta}+\epsilon} \Delta^{-\frac{8}{13+10\eta}}.
\end{equation}
\end{theorem}

\begin{proof}
We leverage the smooth explicit formula due to the barrier $O_{\epsilon}(X^{3/2+\epsilon})$ in Lemma~\ref{Nakasuji}. The strategy is rooted in the work of Soundararajan--Young~\cite{SoundararajanYoung2013}. Let $X^{1/2+\epsilon} \leq Y \leq X$ be a parameter to be chosen later, and let $k(u)$ be a smooth real-valued function with compact support on $(Y, 2Y)$. We suppose that $\int_{-\infty}^{\infty} k(u) du = \int_{Y}^{2Y} k(u) du = 1$ and that for all $\ell \geq 0$,
\begin{equation*}
\int_{-\infty}^{\infty} |k^{(\ell)}(u)| du \ll_{j} Y^{-\ell}.
\end{equation*}
The smoothed version of our counting function is defined by
\begin{equation*}
\Psi_{\Gamma}(X; k) \coloneqq \int_{Y}^{2Y} \Psi_{\Gamma}(X+u) k(u) du
\end{equation*}
so that
\begin{equation*}
\Psi_{\Gamma}(X) = \Psi_{\Gamma}(X; k)-\int_{Y}^{2Y} (\Psi_{\Gamma}(X+u)-\Psi_{\Gamma}(X)) k(u) du.
\end{equation*}
We now invoke the smooth explicit formula of Balog et al.~\cite{BalogBiroCherubiniLaaksonen2022}. For $T, X, Y \gg 1$ with~$T, Y \leq X$ and $TY > X^{1+\xi}$ for some fixed $\xi > 0$, it asserts that
\begin{equation*}
\Psi_{\Gamma}(X; k) = \int_{Y}^{2Y} \left(\frac{(X+u)^{2}}{2}+\sum_{|t_{j}| \leq T} 
\frac{(X+u)^{s_{j}}}{s_{j}} \right) k(u) du+O_{\epsilon}(X^{2+\epsilon} T^{-1}+X^{2+\epsilon} Y^{-2}+X^{1+\epsilon}).
\end{equation*}
If we choose $T = X$, then an elementary manipulation gives
\begin{equation}\label{manipulation}
\Psi_{\Gamma}(X; k) = \frac{1}{2} \int_{Y}^{2Y} (X+u)^{2} k(u) du+E(X; k)+O_{\epsilon}(X^{1+\epsilon}),
\end{equation}
where we exploit the Weyl law along with integration by parts to find that the contribution~of terms with $|t_{j}| \geq X^{1+\xi}/Y$ is at most $O_{\epsilon}(X^{1+\epsilon})$. Thus the second term in~\eqref{manipulation} becomes
\begin{equation}\label{definition-of-E(X;k)}
E(X; k) = \sum_{|t_{j}| \leq X^{1+\xi}/Y} \frac{1}{s_{j}} \int_{Y}^{2Y} 
(X+u)^{s_{j}} k(u) du, \qquad X^{\frac{1}{2}+\epsilon} \leq Y \leq X,
\end{equation}
which is akin to~\cite[Equation (20)]{SoundararajanYoung2013}. A routine calculation leads to
\begin{equation}\label{approximation}
\Psi_{\Gamma}(X)-\frac{X^{2}}{2} = E_{\Gamma}(X)
 = E(X; k)+O_{\epsilon}(X^{\frac{4\theta+6}{5}+\epsilon} Y^{\frac{2}{5}}+X^{1+\epsilon}).
\end{equation}
The Bykovski\u{\i}-type theorem is valid for $X^{1/3+\epsilon} \leq Y \leq X$ and implies that $O_{\epsilon}(X^{1+\epsilon})$ in~\eqref{approximation} is absorbed into $O_{\epsilon}(X^{(4\theta+6)/5+\epsilon} Y^{2/5})$ regardless of how we optimise the parameter $Y$.

We now handle the second moment bound for $E_{\Gamma}(X)$, which is estimated via~\eqref{approximation} as
\begin{equation}\label{upper-bound}
\frac{1}{\Delta} \int_{V}^{V+\Delta} |E_{\Gamma}(X)|^{2} dX 
\ll_{\epsilon} \frac{1}{\Delta} \int_{V}^{V+\Delta} |E(X; k)|^{2} dX+V^{\frac{8\theta+12}{5}+\epsilon} Y^{\frac{4}{5}}.
\end{equation}
By partial summation, the sum over $|t_{j}| \leq X^{1+\xi}/Y$ in $E(X; k)$ equals twice the real part of
\begin{equation*}
\sum_{t_{j} \leq Z} \frac{(X+u)^{1+it_{j}}}{1+it_{j}}
 = (X+u) \frac{S(Z, X+u)}{1+iZ}+i(X+u) \int_{1}^{Z} \frac{S(U, X+u)}{(1+iU)^{2}} dU
\end{equation*}
with $Z \coloneqq X^{1+\xi} Y^{-1}$. Using Corollary~\ref{replacement} with a repeated application of the Cauchy--Schwarz inequality, the average over $V \leq X \leq V+\Delta$ boils down to
\begin{align*}
&\frac{1}{\Delta} \int_{V}^{V+\Delta} \left|\sum_{t_{j} \leq Z} \frac{(X+u)^{s_{j}}}{s_{j}} \right|^{2} dX\\
&\ll \frac{V^{2}}{\Delta Z_{0}^{2}} \int_{V}^{V+\Delta} |S(Z_{0}, X+u)|^{2} dX
 + \frac{V^{2} \log Z_{0}}{\Delta} \int_{1}^{Z_{0}} \int_{V}^{V+\Delta} |S(U, X+u)|^{2} \frac{dX dU}{U^{3}}\\
&\ll \Delta^{-1} Z_{0}^{\frac{1}{2}+\eta} V^{\frac{7}{2}+\epsilon} 
\ll \Delta^{-1} V^{4+\eta+\epsilon} Y^{-\frac{1}{2}-\eta}
\end{align*}
with $Z_{0} \coloneqq V^{1+\epsilon} Y^{-1}$ and $V^{1/2+\epsilon} \leq Y \leq V$. 
Using~\eqref{definition-of-E(X;k)} and~\eqref{upper-bound}, we thus conclude that
\begin{equation*}
\frac{1}{\Delta} \int_{V}^{V+\Delta} |E_{\Gamma}(X)|^{2} dX 
\ll_{\epsilon} \Delta^{-1} V^{4+\eta+\epsilon} Y^{-\frac{1}{2}-\eta}+V^{\frac{8\theta+12}{5}+\epsilon} Y^{\frac{4}{5}}.
\end{equation*}
Balancing the right-hand side with
\begin{equation}\label{choice}
Y = V^{\frac{10\eta+16(1-\theta)}{13+10\eta}} \Delta^{-\frac{10}{13+10\eta}}
\end{equation}
yields
\begin{equation*}
\frac{1}{\Delta} \int_{V}^{V+\Delta} |E_{\Gamma}(X)|^{2} dX 
\ll_{\epsilon} V^{\frac{44+8\theta+16\eta(2+\theta)}{13+10\eta}+\epsilon} \Delta^{-\frac{8}{13+10\eta}}.
\end{equation*}
This completes the proof of Theorem~\ref{second-moment-2}.
\end{proof}

\section{Corollaries and Theorem~\ref{main-2}}\label{corollaries-and-Theorem-1.3}
This section is devoted to the proof of Theorem~\ref{main-2} and some byproducts of Theorem~\ref{second-moment}.

\subsection{Bounds in Short Intervals}\label{bounds-in-short-intervals}
We discuss a three-dimensional analogue of~\cite[Theorem 2]{BalogBiroHarcosMaga2019}. The second moment bound~\eqref{second-moment-3} leads to the following short interval estimate.
\begin{theorem}\label{improved-short-interval}
Keep the notation as above. Assume that
\begin{equation*}
V^{-\frac{1}{2}} \leq h \leq V^{\frac{3-16\theta}{13+10\eta}} \Delta^{-\frac{10}{13+10\eta}}, \qquad 
V^{\frac{3}{10}-\frac{8\theta}{5}} \leq \Delta \leq V^{\frac{19}{20}+\frac{\eta}{2}-\frac{8\theta}{5}}.
\end{equation*}
Then we have for any $\epsilon$ that
\begin{equation}\label{improved-short-interval-2}
\frac{1}{\Delta} \int_{V}^{V+\Delta} |\Psi_{\Gamma}(X)-\Psi_{\Gamma}(X-hX)-h(1-h/2)X^{2}|^{2} dX 
\ll_{\epsilon} \Delta^{-1} V^{7/2+\epsilon} h^{-1/2-\eta}.
\end{equation}
\end{theorem}

\begin{remark}
We emphasise that the bound~\eqref{improved-short-interval-2} does not contain the additional term $V^{3}(\log V)^{2}$. This enables one to obtain a lower exponent up to the record of the subconvex exponent $\eta$.
\end{remark}

\begin{proof}
The smooth explicit formula leads to
\begin{equation*}
\Psi_{\Gamma}(X)-\Psi_{\Gamma}(X-hX)-h \left(1-\frac{h}{2} \right) X^{2}
 = E(X; k)-E(X(1-h); k)+O_{\epsilon}(X^{\frac{4\theta+6}{5}+\epsilon} Y^{\frac{2}{5}}),
\end{equation*}
where the notation is the same as in the proof of Theorem~\ref{second-moment-2}. The goal is to evaluate~the second moment of this expression over the short interval $V \leq X \leq V+\Delta$. The contribution of spectral parameters up to $h^{-1}$ can be bounded by the Cauchy--Schwarz inequality as
\begin{equation}\label{contribution-of-the-spectral-parameters-up-to-1/h}
\frac{1}{\Delta} \int_{V}^{V+\Delta} \left|\int_{1-h}^{1} X S \left(\frac{1}{h}, u+X\zeta \right) d\zeta \right|^{2} dX
\ll \frac{V^{2}}{\Delta} \left(\int_{1-h}^{1} \frac{d\zeta}{\zeta} \right) 
\left(\int_{1-h}^{1} \int_{V}^{V+\Delta} \left|S \left(\frac{1}{h}, u+X\zeta \right) \right|^{2} dX \frac{d\zeta}{\zeta} \right),
\end{equation}
where we disregard the integral over $u$ in~\eqref{definition-of-E(X;k)} due to the assumption on $k(u)$. In view of the uniform bound in Corollary~\ref{replacement}, the integral in $X$ is $O_{\epsilon}(V^{3/2+\epsilon} h^{-5/2-\eta})$ and the right-hand side of~\eqref{contribution-of-the-spectral-parameters-up-to-1/h} is $O_{\epsilon}(\Delta^{-1} V^{7/2+\epsilon} h^{-1/2-\eta})$. Setting $Z = X^{1+\xi} Y^{-1}$, we estimate the contribution~of spectral parameters $h^{-1} < t_{j} \leq Z$ as
\begin{equation}\label{contribution-of-the-remaining-spectral-parameters}
\frac{1}{\Delta} \int_{V}^{V+\Delta} \left|\sum_{1/h < t_{j} \leq Z} \frac{(X+u)^{1+it_{j}}}{1+it_{j}} \right|^{2} dX
 + \frac{1}{(1-h) \Delta} \int_{(1-h)V}^{(1-h)(V+\Delta)} 
\left|\sum_{1/h < t_{j} \leq Z} \frac{(X+u)^{1+it_{j}}}{1+it_{j}} \right|^{2} dX.
\end{equation}
The first term can be evaluated via partial summation, the Cauchy--Schwarz inequality, and Theorem~\ref{smoothing} as
\begin{align*}
&\ll \frac{h^{2} V^{2}}{\Delta} \int_{V}^{V+\Delta} \left|S \left(\frac{1}{h}, X \right) \right|^{2} dX
 + \frac{V^{2}}{\Delta Z_{0}^{2}} \int_{V}^{V+\Delta} |S(Z_{0}, X)|^{2} dX\\
&\qquad + \frac{V^{2} \log(hZ_{0})}{\Delta} \int_{1/h}^{Z_{0}} \int_{V}^{V+\Delta} |S(U, X)|^{2} \frac{dX dU}{|1+iU|^{3}}\\
&\ll_{\epsilon} \Delta^{-1} V^{\frac{7}{2}+\epsilon} h^{-\frac{1}{2}-\eta}
 + \Delta^{-1} Z_{0}^{\frac{1}{2}+\eta} V^{\frac{7}{2}+\epsilon}
 + \Delta^{-1} V^{\frac{7}{2}+\epsilon} \int_{1/h}^{Z_{0}} U^{-\frac{1}{2}+\eta} dU\\
&\ll_{\epsilon} \Delta^{-1} V^{\frac{7}{2}+\epsilon}(h^{-\frac{1}{2}-\eta}+Z_{0}^{\frac{1}{2}+\eta})
\end{align*}
with $Z_{0} = V^{1+\epsilon} Y^{-1}$. Similarly, the second integral in~\eqref{contribution-of-the-remaining-spectral-parameters} is bounded by the same quantity. We note that the error term in the smooth explicit formula contributes $O_{\epsilon}(V^{(8\theta+12)/5+\epsilon} Y^{4/5})$. Consequently, the optimal choice becomes
\begin{equation*}
Y = V^{\frac{10\eta+16(1-\theta)}{13+10\eta}} \Delta^{-\frac{10}{13+10\eta}}
\end{equation*}
and the term involving $h$ dominates the other contributions. Since we have
\begin{align*}
\frac{3-16\theta}{13+10\eta}-\left(\frac{19}{20}+\frac{\eta}{2}-\frac{8\theta}{5} \right) \frac{10}{13+10\eta} &= -\frac{1}{2},\\
\frac{3-16\theta}{13+10\eta}-\left(\frac{3}{10}-\frac{8\theta}{5} \right) \frac{10}{13+10\eta} &= 0,
\end{align*}
our restrictions on $h$ and $\Delta$ in Theorem~\ref{improved-short-interval} make sense.
\end{proof}

\subsection{Bounds for Almost All $X$}\label{bounds-for-almost-all-X}
Koyama~\cite{Koyama2016} initiated the study of more effective bounds for $E_{\Gamma}(X)$ based on the notion of Gallagher~\cite{Gallagher1970}. His results are of the type
\begin{equation}\label{outside-a-closed-set-with-finite-logarithmic-measure}
E_{\Gamma}(X) \ll_{\epsilon} X^{\delta+\epsilon}
\end{equation}
outside a closed subset $\mathcal{A}$ of $[2, \infty]$ with finite logarithmic measure, namely $\int_{\mathcal{A}} dx/x < \infty$.~In particular, the estimate~\eqref{outside-a-closed-set-with-finite-logarithmic-measure} with $\delta = 7/10$ was shown in the two-dimensional setting for $\Gamma$ cocompact. Avdispahi\'{c}~\cite{Avdispahic2018,Avdispahic2018-2} generalised this idea to three dimensions. For $\Gamma = \PSL_{2}(\mathcal{O}_{K})$, his work demonstrates that $E_{\Gamma}(X) = O_{\epsilon}(X^{21/13}(\log X)^{2/13}(\log \log X)^{2/13+\epsilon})$ for $X \not \in \mathcal{A}$. In this direction, Theorem~\ref{main} improves his work to the extent that $\delta$ can be taken to be any number exceeding 3/2. In other words, we deduce the following corollary.
\begin{corollary}\label{finite-logarithmic-measure}
Keep the notation as above. For any $\delta > 3/2$, there exists a set $A \subseteq [2, \infty)$~of finite logarithmic measure such that $E_{\Gamma}(X) \ll X^{\delta}$ for $X \notin \mathcal{A}$. Indeed, the set $\mathcal{A} = \{X \geq 1: |E_{\Gamma}(X)| \geq X^{3/2}(\log X)^{1+\epsilon} \}$ has finite logarithmic measure.
\end{corollary}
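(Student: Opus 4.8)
The plan is a dyadic decomposition of $[2,\infty)$ powered by the second moment bound of Theorem~\ref{second-moment}, combined --- for the sharp logarithmic form --- with a Gallagher-type argument in the spirit of Koyama~\cite{Koyama2016} and Avdispahi\'{c}~\cite{Avdispahic2018,Avdispahic2018-2}. Since $X^{\delta} \geqslant X^{3/2}(\log X)^{1+\epsilon}$ for all sufficiently large $X$ once $\delta > 3/2$, it suffices to treat the set $\mathcal{A} = \{X \geqslant 2 : |E_{\Gamma}(X)| \geqslant X^{3/2}(\log X)^{1+\epsilon}\}$, the initial segment $[1,2]$ having logarithmic measure $\log 2 < \infty$. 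Write $[2,\infty) = \bigcup_{k \geqslant 1}[2^{k},2^{k+1})$, put $V_{k} = 2^{k}$ and $\mathcal{A}_{k} = \mathcal{A} \cap [2^{k},2^{k+1})$. Applying~\eqref{3/2-average} with $\Delta = V_{k}$ (an admissible choice) gives $\int_{V_{k}}^{2V_{k}}|E_{\Gamma}(X)|^{2}\,dX \ll V_{k}^{4+\epsilon}$, and since $|E_{\Gamma}(X)|^{2} \geqslant X^{3}(\log X)^{2+2\epsilon} \gg V_{k}^{3}k^{2+2\epsilon}$ on $\mathcal{A}_{k}$, one gets the crude bound $|\mathcal{A}_{k}| \ll V_{k}^{1+\epsilon}k^{-2-2\epsilon}$ for the Lebesgue measure, whence $\int_{\mathcal{A}_{k}}dX/X \ll V_{k}^{\epsilon}k^{-2-2\epsilon}$.

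This already settles the first (non-sharp) assertion: for a fixed $\delta = 3/2+2\rho$ with $\rho > 0$, running the same estimate with the threshold $X^{\delta}$ in place of $X^{3/2}(\log X)^{1+\epsilon}$ and taking $\epsilon = \rho$ in~\eqref{3/2-average} yields $|\mathcal{A}_{k}| \ll V_{k}^{1-3\rho}$, hence $\int_{\mathcal{A}_{k}}dX/X \ll 2^{-3\rho k}$, and the geometric series $\sum_{k}2^{-3\rho k}$ converges; thus $E_{\Gamma}(X) \ll X^{\delta}$ off a set of finite logarithmic measure. For the refined statement the geometric gain disappears and the crude bound is just short of giving a convergent $\sum_{k}\int_{\mathcal{A}_{k}}dX/X$. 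To recover the missing room one exploits the monotonicity of $\Psi_{\Gamma}$: if $E_{\Gamma}(X_{0}) \geqslant X_{0}^{3/2}(\log X_{0})^{1+\epsilon}$ then, choosing $h \asymp X_{0}^{-1/2}(\log X_{0})^{1+\epsilon}$ so that the main-term fluctuation $hX_{0}^{2}$ stays below half the threshold, one has $E_{\Gamma}(X) \gg X^{3/2}(\log X)^{1+\epsilon}$ throughout $[X_{0},X_{0}+hX_{0}]$ (and symmetrically, via $[X_{0}-hX_{0},X_{0}]$, for large negative values). Feeding this together with the short-interval second moment of Theorem~\ref{improved-short-interval} into Gallagher's lemma~\cite{Gallagher1970}, as in~\cite{Koyama2016,Avdispahic2018,Avdispahic2018-2}, upgrades the mean-square control into a bound on $|\mathcal{A}_{k}|$ that is summable against $dX/X$, the weight $(\log X)^{1+\epsilon}$ being exactly what makes $\sum_{k}\int_{\mathcal{A}_{k}}dX/X < \infty$.

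The delicate point --- and the main obstacle --- is the bookkeeping of logarithmic factors in the refined statement. One must verify that passing from Theorem~\ref{second-moment} through Gallagher's lemma introduces only genuine powers of $\log X$ and no spurious $X^{\epsilon}$, so that the final exponent is $1+\epsilon$ rather than a larger power of $\log X$; this forces one to track the implied constants in~\eqref{main} and~\eqref{improved-short-interval-2} with care (the secondary term $V^{3}(\log V)^{2}$ already has the right shape, but the first term of~\eqref{main} must be handled at the level of powers of $\log V$ when $\eta = 1/2$). One must also check that the value $h \asymp X^{-1/2}(\log X)^{1+\epsilon}$ dictated by the monotonicity step is compatible with the admissible range of $\Delta$ --- equivalently of the spectral cutoff $T$ --- underlying~\eqref{3/2-average} and Theorem~\ref{improved-short-interval}. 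Everything else is a routine dyadic summation.
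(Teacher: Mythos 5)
Your dyadic Chebyshev argument for the first assertion (the bound $E_{\Gamma}(X) \ll X^{\delta}$ off a set of finite logarithmic measure, for each fixed $\delta > 3/2$) is correct and is almost certainly what the paper has in mind when it calls Corollary~\ref{finite-logarithmic-measure} ``a rather straightforward consequence of our second moment bound.'' Choosing the parameter in Theorem~\ref{second-moment} strictly smaller than $\delta - 3/2$ yields a genuine geometric gain $\int_{\mathcal{A}_{k}} dX/X \ll 2^{-ck}$, and the sum over $k$ converges; there is nothing to object to here.

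The ``Indeed'' refinement, however, is where you have flagged but not closed a real gap, and I do not believe it can be closed with the tools available in the paper. With the unconditional value $\eta = 1/2$, the first term of~\eqref{main} at $\Delta = V$ is $V^{3+\epsilon}$, which dominates the secondary term $V^{3}(\log V)^{2}$ for $V$ large; hence the best the second moment theorem gives is $\int_{V}^{2V} |E_{\Gamma}(X)|^{2}\,dX \ll_{\epsilon} V^{4+\epsilon}$. Feeding this into Chebyshev produces $\int_{\mathcal{A}_{k}} dX/X \ll_{\epsilon} 2^{k\epsilon}/k^{2+2\epsilon}$, which sums to $\infty$ for every fixed $\epsilon > 0$, and one cannot take $\epsilon = \epsilon(k) \to 0$ because the $\epsilon$-dependence of the implied constant in~\eqref{main} is not controlled. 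Your proposed repair --- propagating large values of $E_{\Gamma}$ by monotonicity of $\Psi_{\Gamma}$ and feeding a short-interval second moment into Gallagher's lemma --- does not escape this, because Theorem~\ref{improved-short-interval} and the estimates underlying it (Theorem~\ref{smoothing}, Corollary~\ref{replacement}) all inherit the same $V^{\epsilon}$ factor from the mean-value hypothesis~\eqref{mean-value-estimate}. What the refined statement would require is $\int_{V}^{2V}|E_{\Gamma}(X)|^{2}\,dX \ll V^{4}(\log V)^{A}$ for a fixed $A$, and that would follow only if the $V^{3}(\log V)^{2}$ term in~\eqref{main} were the dominant one --- i.e.\ only if $\eta < 1/2$, which is exactly what is not currently known. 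So: the first assertion is sound, but the ``Indeed'' part is not a consequence of the second moment bound as you (or the paper) present it, and the hole is precisely the $V^{\epsilon}$ you were worried about.
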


When $\Gamma = \PSL_{2}(\Z)$, Iwaniec~\cite[p.187]{Iwaniec1984-2} has mentioned without a proof that ``One can~also show quite easily that $E_{\Gamma}(X) \ll_{\epsilon} X^{2/3+\epsilon}$ for almost all $X$'' (with the notation adjusted to ours). Corollary~\ref{finite-logarithmic-measure} establishes the three-dimensional analogue of his claim. The antecedent method based on Gallagher's notion is a detour as Corollary~\ref{finite-logarithmic-measure} is an immediate~consequence of our second moment bound. Heuristically, the Gallagherian exponent in the Prime Geodesic Theorem may be taken to be the conjectural exponent (the barrier in the explicit formula)~in pointwise bounds for $E_{\Gamma}(X)$.

\subsection{Averages of Class Numbers}\label{averages-of-class-numbers}
Sarnak~\cite{Sarnak1983} offered a bijective correspondence between norms $N(T)$ and primitive binary quadratic forms $Q(x, y) = ax^{2}+bxy+cy^{2}$ of discriminants $d = b^{2}-4ac > 0$ (``primitive'' means $(a, b, c) = 1$). Denote by $h(d)$ the number of inequivalent classes of such forms of discriminant $d$ and by $\epsilon_{d} = (t+\sqrt{d} u)/2$ the fundamental solution to the Pell-type equation $t^{2}-du^{2} = 4$ for $d \in \Omega$ and $(t, u) \in \mathcal{O} \times \mathcal{O}$, where $\Omega$ signifies the set of discriminants of binary quadratic forms over Gaussian integers, namely
\begin{equation*}
\Omega \coloneqq \{m \in \mathcal{O}: \text{$m \equiv y^{2} \tpmod 4$ for some $y \in \Z[i]$ and $m$ is not a perfect square} \}.
\end{equation*}
The final condition in the definition of $\Omega$ ensures that a form of discriminant $d \in \Omega$ does not factorise over $\mathcal{O}$. Automorphs of $Q(x, y)$ are given by $\pm P(t, u)$, where
\begin{equation*}
P(t, u) = 
	\begin{pmatrix}
	\dfrac{t-bu}{2} & -cu\\
	au & \dfrac{t+bu}{2}
	\end{pmatrix}.
\end{equation*}
For $u \ne 0$, $P(t, u)$ is hyperbolic with norm $(t+\sqrt{d} u)^{2}/4$ and trace $t$. We denote by $(t_{d}, u_{d})$ the fundamental solution to the Pell-type equation so that $P(t_{d}, u_{d})$ is a primitive hyperbolic matrix with norm $\epsilon_{d}^{2}$ and trace $t_{d}$. Therefore, $Q(x, y) \mapsto P(t, u)$ sends the primitive quadratic form $Q(x, y)$ of discriminant $d$ to the primitive hyperbolic element $P(t_{d}, u_{d})$. Hence, for any discriminant $d$, there are $h(d)$, the class number, primitive hyperbolic conjugacy classes and they all have the same norm $\epsilon_{d}^{2}$ and trace $t_{d}$. Sarnak~\cite[Corollary 4.1]{Sarnak1983} has established that
\begin{equation*}
\sum_{\substack{d \in \Omega \\ |\epsilon_{d}| \leq \sqrt{X}}} h(d) = \pi_{\Gamma}(X)
\coloneqq \# \{\text{$T_{0}$ primitive hyperbolic or loxodromic: $N(T_{0}) \leq X$} \}.
\end{equation*}
Consequently, we obtain the following corollary of Corollary~\ref{corollary-of-Theorem-1.1}.
\begin{theorem}\label{corollary}
For $1 \ll \Delta \leq V$ and $\epsilon > 0$, we have that
\begin{equation*}
\frac{1}{\Delta} \int_{V}^{V+\Delta} \left|\sum_{\substack{d \in \Omega \\ |\epsilon_{d}| \leq \sqrt{X}}} h(d)-\Li(X^{2}) \right|^{2} dX 
\ll_{\epsilon} V^{\frac{62}{17}+\epsilon} \Delta^{-\frac{12}{17}}.
\end{equation*}
\end{theorem}

Theorem~\ref{corollary} improves upon the work of Balkanova et al.~\cite[Corollary 1.7]{BalkanovaChatzakosCherubiniFrolenkovLaaksonen2019}\footnote{The right-hand side of~\cite[Equation (1.3)]{BalkanovaChatzakosCherubiniFrolenkovLaaksonen2019} seems incorrect. If we replace $X$ with $\sqrt{X}$ in the integrand of the left-hand side of their bound (1.3), then the correctness of the bound $\Delta^{-2/5} V^{18/5+\epsilon}$ is guaranteed.}.
\begin{remark}
Since
\begin{equation*}
\mathscr{D} \coloneqq \{d > 0: \text{$d \equiv 0, 1 \tpmod 4$ and $d$ is not a perfect square} \} \subset \Omega,
\end{equation*}
the pseudoprime counting function $\pi_{\Gamma}(X)$ with $\Gamma \subset \PSL_{2}(\C)$ counts many more norms than in two dimensions. Since the respective main terms in the asymptotic formul{\ae} for~$\pi_{\Gamma}(X)$ in the two and three dimensional settings are $\Li(X)$ and $\Li(X^{2})$, the contribution of the norms corresponding to loxodromic elements is considerable.
\end{remark}

\subsection{Uniform Pointwise Bounds}\label{uniform-pointwise-bounds}
In the 1990's, Motohashi~\cite{Motohashi1997-2} found an identity relating the smoothed fourth moment of the Riemann zeta function $\zeta(1/2+it)$ to the spectral cubic moment of automorphic $L$-functions attached to the modular group $\SL_{2}(\Z)$. Starting with his ground-breaking work, this kind of identity has been a recurring theme in number theory. For a sufficiently well-behaved test function $w$, his work demonstrates the spectral decomposition
\begin{equation}\label{Motohashi-formula}
\int_{-\infty}^{\infty} \left|\zeta \left(\frac{1}{2}+it \right) \right|^{4} w(t) dt 
\leftrightsquigarrow \sum_{j} L \left(\frac{1}{2}, \varphi_{j} \right)^{3} \check{w}(t_{j})+(\cdots),
\end{equation}
where the sum runs over Hecke--Maa{\ss} cusp forms $\varphi_{j}$ of Laplacian eigenvalue $1/4+t_{j}^{2}$ and $\check{w}$ is an elaborate integral transform of $w$ involving the Gau{\ss} hypergeometric function $_{2}F_{1}$. The formula~\eqref{Motohashi-formula} relates two completely different families of $L$-functions in an exact equality.

We now look over the ambient scenery. Michel--Venkatesh (\cite[\S 4.3.3]{MichelVenkatesh2006},~\cite[\S 4.5.3]{MichelVenkatesh2010}) offered a geometric and spectral manoeuvre to reprove the identity~\eqref{Motohashi-formula}. Following their perspective, Nelson~\cite{Nelson2020} studied the cubic moment of automorphic $L$-functions on $\GL_{2}$ via the utilisation of regularised diagonal periods of Eisenstein series. We refer the reader to the work of Wu~\cite{Wu2022} and Balkanova--Frolenkov--Wu~\cite{BalkanovaFrolenkovWu2022}. We record the result of Nelson~\cite{Nelson2020} as follows.
\begin{theorem}[{Nelson~\cite[Theorem 1]{Nelson2020}}]\label{Nelson}
Let $F$ be a number field with ad\`{e}le ring $\A$, and let $\chi$ be a quadratic character of $\A^{\times}/F^{\times}$. Then we have the Weyl-strength subconvex bound
\begin{equation*}
L \left(\frac{1}{2}+it, \chi \right) \ll_{\epsilon} (1+|t|)^{O(1)} C(\chi)^{\frac{1}{6}+\epsilon}.
\end{equation*}
\end{theorem}

\begin{proof}[Proof of Theorem~\ref{main-2}]
Let $k(u)$ be the same test function as above. It then follows that
\begin{equation*}
E_{\Gamma}(X) = E(X; k)+O_{\epsilon}(X^{\frac{4\theta+6}{5}+\epsilon} Y^{\frac{2}{5}}+X^{1+\epsilon})
\end{equation*}
where $E(X; k)$ is defined in~\eqref{definition-of-E(X;k)}. Invoking the bound~\eqref{freedom}, we deduce
\begin{equation*}
E(X; k) \ll_{\epsilon} X^{2+\frac{\eta}{2}+\epsilon} Y^{-\frac{3+2\eta}{4}}+X^{2+\epsilon} Y^{-1}.
\end{equation*}
Therefore, we obtain
\begin{equation*}
E_{\Gamma}(X) \ll_{\epsilon} X^{2+\frac{\eta}{2}+\epsilon} Y^{-\frac{3+2\eta}{4}}
 + X^{\frac{4\theta+6}{5}+\epsilon} Y^{\frac{2}{5}}+X^{2+\epsilon} Y^{-1}.
\end{equation*}
We optimise $Y = X^{(10\eta+16(1-\theta))/(23+10\eta)}$ to conclude the proof of Theorem~\ref{main-2}.
\end{proof}

\section{Conclusions and Conjectures}\label{conclusions-and-conjectures}
There remain many unfathomed questions concerning the spectral exponential sum. This section aims to formulate a conjecture on the best possible bound for $S(T, X)$ corresponding to that in two dimensions, and to establish Theorem~\ref{Laaksonen-analogue}. Numerical evidence is also given.

\subsection{Conditional Improvements}\label{conditional-improvements}
Petridis--Risager~\cite[Conjecture 2.2]{PetridisRisager2017} have conjectured for $\Gamma = \PSL_{2}(\Z)$ that the spectral exponential sum $S(T, X)$ exhibits square-root cancellation. This subsection aims to speculate what the true order of $E_{\Gamma}(X)$ for $\Gamma = \PSL_{2}(\mathcal{O})$ should be. The bound~$E_{\Gamma}(X) = O_{\epsilon}(X^{1+\epsilon})$ seems reasonable and we are led to the following\footnote{It is beneficial to compare Conjecture~\ref{cancellation} with~\eqref{freedom} to see how strong the bound~\eqref{cancellation-2} is.}.
\begin{conjecture}\label{cancellation}
Keep the notation as above. Then we have for any $\epsilon > 0$ that
\begin{equation}\label{cancellation-2}
S(T, X) \ll_{\epsilon} T^{2+\epsilon} X^{\epsilon}.
\end{equation}
\end{conjecture}

Nonetheless, both the conjectures for $E_{\Gamma}(X)$ and $S(T, X)$ are no longer tractable, and~we reach a discrepancy between them. This phenomenon was bespoken in~\cite[Remarks~1.5~\&~3.1]{BalkanovaChatzakosCherubiniFrolenkovLaaksonen2019}. In order to describe the discrepancy, we spell out asymmetry between $E_{\Gamma}(X)$ and $S(T, X)$. If $T = \sqrt{X}$, then the error term in Lemma~\ref{Nakasuji} is $O_{\epsilon}(X^{3/2+\epsilon})$ and the spectral sum $\sum_{|t_{j}| \leq T} s_{j}^{-1} X^{s_{j}}$ is bounded by the same quantity under Conjecture~\ref{cancellation}. Theorem~\ref{Laaksonen-analogue} yields Conjecture~\ref{cancellation}~for fixed $X \geq 1$ as $T \to \infty$, which bears out the legitimacy of the bound $E_{\Gamma}(X) = O_{\epsilon}(X^{3/2+\epsilon})$. If we provisionally ignore the restriction $1 \leq T \leq \sqrt{X}$ and assume that $S(T, X) = O_{\epsilon}(T^{1+\epsilon} X^{\epsilon})$, then we balance with $T = X$ to obtain $E_{\Gamma}(X) = O_{\epsilon}(X^{1+\epsilon})$. There are reasons suggesting~that this bound may possibly hold. The work of Nakasuji~\cite{Nakasuji2001} is a two-dimensional counterpart~of Hejhal~\cite{Hejhal1976} and reveals that the bound $E_{\Gamma}(X) = O_{\epsilon}(X^{1+\epsilon})$, if true, would be optimal.
\begin{theorem}[Nakasuji~\cite{Nakasuji2001}]\label{omega-result}
Let $\Gamma \subset \PSL_{2}(\C)$ be a cocompact group, or a cofinite subgroup such that $\sum_{\gamma_{j} > 0} X^{\beta_{j}-1}/\gamma_{j}^{2} \ll (\log X)^{-3}$, where $\beta_{j}+i\gamma_{j}$ are poles of the scattering determinant. Then we have that
\begin{equation*}
\Psi_{\Gamma}(X)
 = \frac{X^{2}}{2}+\sum_{1 < s_{j} < 2} \frac{X^{s_{j}}}{s_{j}}+\Omega_{\pm}(X(\log \log X)^{\frac{1}{3}}).
\end{equation*}
\end{theorem}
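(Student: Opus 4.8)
The plan is to follow the classical $\Omega$-result machinery for prime-counting error terms, adapted to the three-dimensional Selberg zeta function $Z(s)$ of $\mathcal{M}$, in the spirit of the two-dimensional results of Hejhal and the arguments underlying $\Omega$-theorems for $\pi(X)$. The starting point is the explicit formula (Lemma \ref{Nakasuji}), which writes $E_\Gamma(X) = \sum_{|t_j|\leqslant T} X^{s_j}/s_j + O(X^2 T^{-1}\log X)$. Since $s_j = 1+it_j$ lies on the line $\Re(s)=1$ (away from the finitely many exceptional eigenvalues, which contribute only to the main term), the terms $X^{s_j}/s_j = X\,X^{it_j}/(1+it_j)$ are \emph{genuinely oscillatory} of size $\asymp X/|t_j|$, and the Weyl law (Theorem \ref{Bonthonneau}, or \eqref{Weyl's-law}) guarantees an abundance of spectral parameters $t_j$ in any window $[T, T+1]$, with $\#\{t_j \in [T,T+1]\} \asymp T^2$. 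The heart of the argument is therefore to exhibit, for arbitrarily large $X$, a choice making the real part of this spectral sum large and positive (and, by a symmetric choice, large and negative), producing the claimed $\Omega_\pm$ oscillation.

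Concretely, I would first truncate: choose $T = T(X)$ a slowly growing function (roughly $T \asymp (\log X)^{A}$ for a suitable $A$, so that the tail $X^2 T^{-1}\log X$ is controlled relative to $X$ times the main contribution) and split $E_\Gamma(X)/X = 2\Re \sum_{0 < t_j \leqslant T} X^{it_j}/(1+it_j) + (\text{error})$. Next, I would apply a Dirichlet-box / Kronecker-type simultaneous approximation argument: for any finite set of real frequencies $t_1,\dots,t_K$ and any target phases, there exist arbitrarily large real $Y = \log X$ with $Y t_j$ simultaneously close to prescribed values modulo $2\pi$. Selecting the phases so that every $X^{it_j} = e^{iY t_j}$ is close to $+1$ (hence each summand close to $1/(1+it_j)$, with real part $1/(1+t_j^2) > 0$), the truncated sum becomes $\gg \sum_{0 < t_j \leqslant T} (1+t_j^2)^{-1}$. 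By the Weyl law this partial sum over the spectrum is $\gg T$ (the density $t^2\,dt$ against $t^{-2}$ integrates to $\asymp T$), which already gives $E_\Gamma(X) \gg XT \gg X(\log X)^A$ along a sequence of $X$. To sharpen this to the stated $X(\log\log X)^{1/3}$ one must be more careful about how large $T$ one can push while keeping both the box-principle approximation valid \emph{and} the truncation error $O(X^2 T^{-1}\log X)$ genuinely smaller than the main contribution $\asymp XT$; balancing $XT \gg X^2 T^{-1}\log X$ forces $T \ll (X\log X)^{1/2}$, which is harmless, but the real constraint is the Dirichlet-box one: approximating $K = \#\{t_j\leqslant T\} \asymp T^3$ frequencies simultaneously requires $\log X$ of size roughly $\exp(cK)$, i.e. $\log\log X \gg T^3$, whence $T \ll (\log\log X)^{1/3}$ and the sum is $\gg T \gg (\log\log X)^{1/3}$. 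The hypothesis on the scattering poles, $\sum_{\gamma_j>0} X^{\beta_j-1}/\gamma_j^2 \ll (1+\log X)^{-3}$, enters precisely to ensure that the continuous-spectrum / winding-number contribution to the explicit formula does not cancel or overwhelm this oscillation; for $\Gamma$ cocompact there is no such contribution and the hypothesis is vacuous.

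The main obstacle I anticipate is making the simultaneous-approximation step quantitatively honest: one needs an effective form of Kronecker's theorem (via the geometry of numbers / successive minima, or via a mean-value estimate for $|\sum X^{it_j}/(1+it_j)|^2$ over a long $X$-range combined with a positivity trick) that both (i) covers all $\asymp T^3$ frequencies up to height $T$ and (ii) is compatible with the loss incurred from the truncation error, and it is the interplay of these two that pins down the exponent $1/3$ on $\log\log X$. A cleaner route, which I would pursue in parallel, is to avoid explicit Diophantine approximation altogether: estimate $\frac{1}{Z}\int_{2}^{Z} |E_\Gamma(e^u)|^2\,e^{-2u}\,du$ from below by expanding the spectral sum and using the Weyl law to show the diagonal contributes $\gg T \gg (\log\log Z)^{1/3}$ (after optimising $T$), while the off-diagonal is negligible; a lower bound on the mean square of this size immediately yields $E_\Gamma(X) = \Omega(X(\log\log X)^{1/3})$, and a refinement separating positive and negative contributions (again using that the summands have positive real part once phases align) upgrades $\Omega$ to $\Omega_\pm$. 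This mean-square approach sidesteps the delicate uniformity issues and is, I expect, the one actually carried out in Nakasuji's paper.
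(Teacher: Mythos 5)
The paper does not prove this result --- Theorem~\ref{omega-result} is quoted from Nakasuji~\cite{Nakasuji2001} --- so there is no internal argument to compare against; I evaluate your proposal on its own terms. Your dimension count is exactly right and correctly locates the exponent $1/3$: the Weyl law gives $\asymp T^{3}$ spectral parameters up to height $T$; an effective Kronecker / Dirichlet-box step aligning that many phases is possible only for $\log\log X\gg T^{3}$; and when the phases align the signal is $\asymp X\sum_{t_{j}\leqslant T}(1+t_{j}^{2})^{-1}\asymp XT$, giving $\Omega_{\pm}(X(\log\log X)^{1/3})$. This is the precise three-dimensional analogue of Hejhal's two-dimensional prototype, where $\asymp T^{2}$ frequencies force the exponent $1/2$.

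The gap is in the coupling to the explicit formula. You invoke Lemma~\ref{Nakasuji} with remainder $O(X^{2}T^{-1}\log X)$ and assert that ``balancing $XT\gg X^{2}T^{-1}\log X$ forces $T\ll(X\log X)^{1/2}$, which is harmless''; the inequality points the other way. For the remainder to be subordinate to the signal $XT$ one needs $T\gg(X\log X)^{1/2}$, which is violently incompatible with the Dirichlet-box constraint $T\ll(\log\log X)^{1/3}$. With $T\asymp(\log\log X)^{1/3}$ the remainder is of size $\gg X^{2}\log X/(\log\log X)^{1/3}$ and drowns the $XT$ signal you are trying to expose; the raw (unsmoothed) explicit formula simply cannot be truncated at a logarithmic height, so your primary route fails as stated. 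The standard repair --- and what underlies Hejhal's argument --- is to smooth: replace the sharp cutoff by a Riesz/Ces\`{a}ro mean or a convolution of $\Psi_{\Gamma}$ against a compactly supported kernel, so that the trace-formula test function decays rapidly and the tail $t_{j}>T$ is negligible for \emph{any} slowly growing $T$; after smoothing the Kronecker step proceeds as you describe and yields the asserted $\Omega_{\pm}$. (Alternatively one bypasses Kronecker by applying Landau's oscillation theorem to the Mellin transform of $E_{\Gamma}$, essentially $Z^{\prime}/Z(s)$, which has nonreal singularities on $\Re s=1$ and none on an interval of the real axis to their right; a kernel exploiting the density of these singularities recovers the same exponent, and this route produces $\Omega_{\pm}$ automatically.) Your mean-square fallback likewise needs the smoothed formula to tame the truncation and, on its own, delivers only $\Omega$, not $\Omega_{\pm}$. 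Your reading of the scattering-pole hypothesis --- that it controls the continuous-spectrum contribution in the cofinite case and is vacuous when $\Gamma$ is cocompact --- is correct.
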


On the other hand, we conditionally improve upon the exponents shown in Theorem~\ref{main-2}.
\begin{theorem}\label{under-conjecture-5.1}
Keep the notation as above. We assume Conjecture~\ref{cancellation}. Then we have that
\begin{equation*}
E_{\Gamma}(X) \ll_{\epsilon} X^{\frac{2(5+2\theta)}{7}+\epsilon}.
\end{equation*}
In particular, the current record $\theta = 1/6$ gives the exponent $32/21 \approx 1.52381$. The additional assumption of $\theta = 0$ gives the exponent $10/7 \approx 1.42857$.
\end{theorem}

\begin{proof}
We use the smooth explicit formula to deduce~\eqref{approximation} and optimise $Y = X^{4(1-\theta)/7}$.
\end{proof}

\subsection{Spectral Exponential Sum}\label{spectral-exponential-sum}
This section sheds some light on the order of magnitude of $S(T, X)$ for fixed $X \geq 1$ as $T \to \infty$. Any technical difficulty is not foreseen in the spectral aspect and we prove an asymptotic formula for $S(T, X)$ from the Weyl law with relative~ease.

Before embarking on the proof of Theorem~\ref{Laaksonen-analogue}, we shall invoke the classical situation. Let $\zeta(s)$ be the Riemann zeta function and denote its nontrivial zeroes by $\rho = \beta+i\gamma$. Let~$\Lambda(x)$ be the von Mangoldt function extended to $\R$ by letting $\Lambda(x) = 0$ for $x \in \R \backslash \mathbb{N}$. The work~of Landau demonstrates the behaviour of an exponential sum counted with the Riemann zeroes.
\begin{theorem}[{Landau~\cite[Satz~1]{Landau1912}}]\label{Landau}
For fixed $X \geq 1$, we have that
\begin{equation}\label{Landaus-formula}
\sum_{0 < \gamma \leq T} x^{\rho} = -\frac{T}{2\pi} \Lambda(x)+O(\log T)
\end{equation}
as $T \to \infty$.
\end{theorem}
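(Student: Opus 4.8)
The plan is to derive Landau's formula~\eqref{Landaus-formula} from the explicit formula for $\zeta'/\zeta$ by integrating along a rectangular contour. First I would recall that, by the argument principle (or by logarithmic differentiation of the Hadamard product), for $s$ away from the zeroes and the pole one has
\begin{equation*}
-\frac{\zeta'}{\zeta}(s) = \sum_{n \geqslant 1} \frac{\Lambda(n)}{n^{s}}
\end{equation*}
in the half-plane $\Re(s) > 1$, and that $-\zeta'/\zeta$ is meromorphic on $\C$ with a simple pole of residue $1$ at each nontrivial zero $\rho$ (counted with multiplicity), a simple pole of residue $-1$ at $s = 1$, and poles at the trivial zeroes $s = -2, -4, \dots$. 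The idea is then to integrate $\frac{1}{2\pi i}\left(-\frac{\zeta'}{\zeta}(s)\right) \frac{x^{s}}{1} \, ds$ — more precisely the function $-\frac{\zeta'}{\zeta}(s)\, x^{s}$ — around the positively oriented boundary of the rectangle with vertices $b - i$, $b + iT$, $-U + iT$, $-U - i$ (or a symmetric variant catching the zeroes with $0 < \gamma \leqslant T$), where $b > 1$ is fixed and $U \to \infty$ through values avoiding the trivial zeroes. By the residue theorem the contour integral equals $\sum_{0 < \gamma \leqslant T} x^{\rho}$ plus the contributions of the pole at $s=1$ and of the trivial zeroes.

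Next I would estimate each side of the rectangle. On the right edge $\Re(s) = b$, the Dirichlet series converges absolutely and $\int_{b-i}^{b+iT} \left(-\frac{\zeta'}{\zeta}(s)\right) x^{s}\, ds = \sum_{n} \Lambda(n) \int_{b-i}^{b+iT} (x/n)^{s}\, ds$; the term $n$ closest in size to $x$ (or equal to $x$ when $x$ is a prime power) produces the main term. Carrying out the elementary integral $\int (x/n)^{s} ds = (x/n)^{s}/\log(x/n)$ and noting that the bottom of the rectangle contributes the $T$-independent endpoint while the top at height $T$ contributes, after summation, the leading term $-\frac{T}{2\pi}\Lambda(x)$ (the factor $1/(2\pi i)$ against $i\,dt$ along the vertical line, with the only surviving piece being $n = x$), gives the stated main term. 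The remaining sum over $n \neq x$ is $O(1)$ for fixed $x > 1$ since $|\log(x/n)|$ is bounded below away from the single value $n=x$ and $\sum \Lambda(n) n^{-b}$ converges. The horizontal segments at heights $\pm 1$ and $T$ are handled using the standard bound $\zeta'/\zeta(\sigma + it) \ll \log^{2}(|t|+2)$ valid for $-1 \leqslant \sigma \leqslant 2$ away from zeroes (obtained from the partial-fraction expansion of $\zeta'/\zeta$ together with the bound $N(T+1) - N(T) \ll \log T$ on the number of zeroes in a unit window); since $x > 1$ the factor $x^{\sigma}$ is maximised at $\sigma = 2$, so the segment at height $T$ contributes $O(\log T)$ after choosing the horizontal ordinate to avoid zeroes, the segment at height $1$ contributes $O(1)$, and letting $U \to \infty$ the left edge and the trivial-zero residues contribute $O(1)$ because $x^{-U} \to 0$ and $\sum_{k\geqslant 1} x^{-2k}/(2k)$ converges. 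Collecting everything yields~\eqref{Landaus-formula}.

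The main obstacle — and the only genuinely delicate point — is the treatment of the horizontal segment at height $T$: one cannot bound $\zeta'/\zeta$ uniformly for all $T$, so one must first shift $T$ by a bounded amount to $T'$ with $T \leqslant T' \leqslant T+1$ chosen so that $T'$ stays a fixed distance from every ordinate $\gamma$ of a zero (possible since there are $O(\log T)$ zeroes in that strip), and then invoke the $\ll \log^{2} T$ bound on that shifted line; the discrepancy $\sum_{T < \gamma \leqslant T'} x^{\rho}$ introduced by this shift is itself $O(\log T)$ in absolute value, which is harmless. A secondary technical point is justifying the interchange of summation and integration on the line $\Re(s) = b$ and controlling the tail $n > x$ versus $n < x$ uniformly; both are routine given absolute convergence. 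One should also remark that the implied constant in the $O(\log T)$ depends on $x$ (through $\min_{n \neq x} |\log(x/n)|$ and through $x^{b}$), consistent with the hypothesis that $x$ is fixed.
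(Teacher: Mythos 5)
The paper does not actually prove this theorem: it is quoted verbatim from Landau~\cite{Landau1912} (Satz~1), and the only accompanying remark is that ``the proof was done by a fairly elementary contour integration''. So your job was to reconstruct Landau's argument, and the overall contour-integration framework you set up --- integrate $-\frac{\zeta'}{\zeta}(s)\,x^{s}$ around a rectangle with right edge in $\Re(s)=b>1$, peel off the $n=x$ diagonal term from the Dirichlet series to produce the $\tfrac{T}{2\pi}\Lambda(x)$ main term, collect the residues at the nontrivial zeroes, at $s=1$, and at the trivial zeroes, and let the left edge drift to $-\infty$ using the exponential decay of $x^{\sigma}$ --- is exactly the right shape of argument.

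There is, however, one genuine gap, and it sits precisely where the theorem's content lies: the treatment of the top horizontal segment. You claim one can choose $T'\in[T,T+1]$ ``so that $T'$ stays a fixed distance from every ordinate $\gamma$''. That is impossible: since $N(T+1)-N(T)\asymp\log T$, the gaps between consecutive ordinates near height $T$ shrink like $1/\log T$, so the best one can guarantee is $|T'-\gamma|\gg 1/\log T$. With that choice the pointwise bound is $\zeta'/\zeta(\sigma+iT')\ll\log^{2}T$ on $-1\leqslant\sigma\leqslant 2$, and integrating $\log^{2}T\cdot x^{\sigma}$ over that range gives $O(\log^{2}T)$, not the claimed $O(\log T)$. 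To recover $O(\log T)$ one must not merely bound the integrand: write $\frac{\zeta'}{\zeta}(\sigma+iT')=\sum_{|\gamma-T'|\leqslant 1}\frac{1}{\sigma+iT'-\rho}+O(\log T)$, and then for each of the $O(\log T)$ nearby zeroes evaluate the integral $\int_{-1}^{b}\frac{x^{\sigma+iT'}}{\sigma+iT'-\rho}\,d\sigma$ directly (e.g.\ by completing the segment to a closed contour and applying the residue theorem); the result is $O(1)$ for each such $\rho$, up to a term $\pm 2\pi i\,x^{\rho}$ which either cancels or is absorbed into the ambiguity of whether $\gamma$ is counted in $\sum_{0<\gamma\leqslant T}$ versus $\sum_{0<\gamma\leqslant T'}$. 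Summing $O(1)$ over $O(\log T)$ zeroes then yields the stated $O(\log T)$. Without this exact treatment of the near-zero terms, the error deteriorates to $O(\log^{2}T)$. Your remark that the shift $T\to T'$ introduces a discrepancy $\sum_{T<\gamma\leqslant T'}x^{\rho}\ll\log T$ is correct and is part of the same bookkeeping, but it does not by itself repair the pointwise bound.
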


Theorem~\ref{Landau} follows from a fairly elementary contour integration. Theorem~\ref{Landau} reveals~that the right-hand side of~\eqref{Landaus-formula} grows by an order of $T$ whenever $x$ attains to a prime power.~The empirical data of Odlyzko~\cite{Odlyzko2014} visualises the real and imaginary parts of the normalised sum $T^{-1} \sum_{0 < \gamma \leq T} x^{\rho}$ as shall be seen in~\cite{PetridisRisager2017}. The peaks at primes are much larger than at higher prime powers, since the von Mangoldt function $\Lambda(x)$ takes the value $\log p$ at powers of a fixed prime $p$. Hence any peak that is higher than the preceding ones corresponds to a new~prime.

The ensuing focus is on the spectral exponential sums. In the context of the full modular group $\Gamma = \PSL_{2}(\Z)$, Fujii~\cite{Fujii1984} announced the following asymptotic formula without a proof.
\begin{theorem}[{Fujii~\cite[Theorem 1]{Fujii1984}}]\label{Fujii-1984}
Let $\Gamma = \PSL_{2}(\Z)$. For fixed $X \geq 1$, we have that
\begin{equation}\label{Fujii}
\sum_{t_{j} \leq T} X^{it_{j}}
 = \frac{\vol(\Gamma \backslash \mathfrak{h}^{2})}{2\pi i \log X} T X^{iT}
 + \frac{T}{2\pi}(\sqrt{X}-X^{-1/2})^{-1} \Lambda_{\Gamma}(X)
 + \frac{T}{\pi} X^{-1/2} \Lambda(\sqrt{X})+O \left(\frac{T}{\log T} \right)
\end{equation}
as $T \to \infty$, where $\Lambda_{\Gamma}(X) = \log N(P_{0})$ if $X = N(P_{0})^{j}$ for $j \geq 1$ and $\Lambda_{\Gamma}(X) = 0$ otherwise.
\end{theorem}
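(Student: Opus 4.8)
The plan is to mimic Landau's derivation of~\eqref{Landaus-formula} --- the ``rather antique method'' alluded to above --- replacing the Riemann zeta function by the Selberg zeta function $Z(s)$ of $\PSL_{2}(\Z)\backslash\mathfrak{h}^{2}$, supplemented by the scattering determinant $\varphi(s)=\sqrt{\pi}\,\Gamma(s-\tfrac12)\zeta(2s-1)\big/\big(\Gamma(s)\zeta(2s)\big)$ accounting for the single cusp. The point is that $\tfrac{Z'}{Z}(s)$, completed by its archimedean gamma factor (which carries the hyperbolic area) and by the $\zeta$-factor coming from $\varphi(s)$, continues meromorphically, obeys a functional equation in $s\leftrightarrow 1-s$, and has its spectral zeros at $s=\tfrac12\pm it_{j}$; so, with $\mathcal{R}$ the rectangle of vertices $1-\sigma_{0}\pm iT$ and $\sigma_{0}\pm iT$ (fixed $\sigma_{0}>1$) intersected with $\{\Im s\ge 0\}$, so that exactly the $t_{j}$ with $0<t_{j}\le T$ are enclosed,
\[
S(T,X)=\frac{1}{2\pi i}\oint_{\mathcal{R}}\frac{d}{ds}\log\big(Z(s)\cdot(\text{completion})\big)\,X^{s-\frac12}\,ds+O_{X}(1).
\]
First I would fix $T$ at distance $\gg 1/\log T$ from every $t_{j}$, permissible at a cost of $O(1)$ since $\#\{j:|t_{j}-T|\le 1\}\ll T$ by the Weyl law.

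I would then treat the four edges in turn. On the right edge $\Re s=\sigma_{0}$ the logarithmic derivative is an absolutely convergent Dirichlet series: the Selberg part is $\sum_{\{P_{0}\}}\sum_{k\ge 1}\tfrac{\log N(P_{0})}{1-N(P_{0})^{-k}}N(P_{0})^{-ks}$, and the $\zeta$-factor contributes a series in $n^{-2s}$. After multiplying by $X^{s-1/2}$ and integrating over $t\in[0,T]$, every non-resonant frequency contributes $O(1/|\log(\cdot)|)$ and sums to $O_{X}(1)$, while the resonance $N(P_{0})^{k}=X$ produces, via $(1-X^{-1})^{-1}X^{-1/2}=(X^{1/2}-X^{-1/2})^{-1}$, the peak $\tfrac{T}{2\pi}(X^{1/2}-X^{-1/2})^{-1}\Lambda_{\Gamma}(X)$, and the resonance $n^{2}=X$ produces $\tfrac{T}{\pi}X^{-1/2}\Lambda(X^{1/2})$ --- the latter being Landau's formula applied to the cuspidal $\zeta$-factor (equivalently, once signs are tracked, the contribution of the non-trivial $\zeta$-zeros at $s=\rho/2$ that the scattering term feeds into the counting function). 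The gamma-factor part of this edge, and the whole bottom edge, contribute $O_{X}(\log T)$, the oscillation $X^{it}$ rescuing the former after one integration by parts. The main term $\tfrac{\vol(\Gamma\backslash\mathfrak{h}^{2})}{2\pi i\log X}TX^{iT}$ is born on the top and left edges from the smooth part of the spectral density --- the gamma-factor contribution transported across the functional equation, whose leading behaviour encodes the Weyl main term $\tfrac{\vol}{4\pi}T^{2}$ --- the oscillatory integral $\int_{0}^{T}tX^{it}\,dt=\tfrac{TX^{iT}}{i\log X}+O((\log X)^{-2})$ delivering the stated constant, the remaining top/left contributions being $O_{X}(\log^{2}T)$ and hence $O_{X}(T/\log T)$ after the customary short averaging in $T$.

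This is equivalent to, and perhaps more transparently run as, the Selberg trace formula with a test function approximating $X^{it}\mathbf{1}_{|t|\le T}$ smoothed on scale $1/\log T$: writing $X^{it}=\cos(t\log X)+i\sin(t\log X)$, the cosine kernel is handled directly as $\cos(t\log X)$ is even, and the sine kernel likewise once one notes that $\operatorname{sgn}(t)\sin(t\log X)$ is \emph{also} an even function. Then the identity term $\tfrac{\vol}{4\pi}\int h(t)\,t\tanh(\pi t)\,dt$ yields the $TX^{iT}$ term, the hyperbolic/loxodromic terms $\sum_{\{P\}}\tfrac{\Lambda_{\Gamma}(P)}{N(P)^{1/2}-N(P)^{-1/2}}\widehat{h}(\log N(P))$ yield the $\Lambda_{\Gamma}(X)$ peak, and the parabolic terms together with $-\tfrac{1}{4\pi}\int\tfrac{\varphi'}{\varphi}(\tfrac12+it)h(t)\,dt$ yield the $\Lambda(X^{1/2})$ peak through the Dirichlet series of $\tfrac{\zeta'}{\zeta}(1+2it)$.

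The principal obstacle is holding the error to $O(T/\log T)$ uniformly in $T$, which is where the classical prime-number-theorem technology enters. One must (i) control $\tfrac{Z'}{Z}(\tfrac12+it)$ --- equivalently $\tfrac1\pi\arg Z(\tfrac12+it)$, which for $\PSL_{2}(\Z)$ is itself only of size $t/\log t$ --- between consecutive zeros, which is exactly what the $1/\log T$ zero-avoidance secures, the ensuing $\log^{2}T$ loss being removed by a short averaging; and (ii) show that the \emph{off-resonance} part of the geometric (respectively continuous-spectrum) sum is truly $O(T/\log T)$ and not merely convergent. For the latter, the compact support or rapid decay of $\widehat{h}$ truncates the geodesic sum to $N(P)\ll T$, and one must exploit the cancellation in $\widehat{h}(\log N(P))$ --- a smoothed Dirichlet kernel centred at $\log X$ --- rather than the triangle inequality, the input being in effect a Prime Geodesic Theorem on average. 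The $\operatorname{sgn}$-trick settles the sine kernel with no further difficulty of principle beyond a doubling of bookkeeping, and it is precisely this that Fujii's ``antique method'' disposes of cleanly.
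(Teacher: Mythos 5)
Your first route --- contour integration of $\tfrac{d}{ds}\log\bigl(Z(s)\cdot\text{completion}\bigr)X^{s-1/2}$ around a rectangle enclosing the spectral zeros, organised so that the right edge carries the absolutely convergent Dirichlet series (whose resonances at $N(P_{0})^{k}=X$ and $n^{2}=X$ produce the two peak terms), while the Weyl main term arises from the archimedean factor --- is indeed the method Fujii used and that the paper reproduces in three dimensions in the proof of Theorem~\ref{Laaksonen-analogue}. The paper's version packages the same contour argument through the precise Weyl law $N(T)=\tfrac{1}{4\pi}\int_{-T}^{T}\tfrac{\varphi'}{\varphi}\,dt+\mathcal{S}(T)+N_{\mathrm{mean}}(T)$ and then computes $S(T,X)=\int_{1}^{T}X^{it}\,dN(t)$ by splitting into the smooth piece ($Y_{1}$), the scattering piece ($Y_{2}$), and the piece $\int X^{it}\,d\mathcal{S}(t)$ ($Y_{3}$) which is itself a contour integral of $\log Z$ around a rectangle. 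Your proposal and the paper's are the same argument viewed from slightly different bookkeeping, and both reach the stated main, peak, and error terms.

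However, the second half of your proposal --- that this ``is equivalent to, and perhaps more transparently run as, the Selberg trace formula'' with the $\operatorname{sgn}$-trick disposing of the sine kernel ``with no further difficulty of principle'' --- contradicts the paper and, I believe, is wrong. The Selberg trace formula requires the test function $h$ to be even \emph{and} holomorphic in the strip $|\Im t|\le\tfrac12+\epsilon$; $\operatorname{sgn}(t)\sin(t\log X)$ is even on $\R$ but is not the restriction of any function holomorphic in a neighbourhood of $t=0$ (it has a corner there, and its distributional Fourier transform has principal-value singularities at $\pm\log X$), so it is simply inadmissible as a trace-formula kernel, and smoothing it into admissibility damages the Fourier transform enough to wreck the $O(T/\log T)$ error. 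The paper is explicit on this point: ``Because the Selberg trace formula demands a test function to be even, we cannot derive a similar formula for the sine kernel from the same approach,'' and it attributes to Laaksonen only the real part of \eqref{Fujii}. The crucial structural advantage of the contour/$dN(t)$ formulation --- and the actual content of the remark that Fujii's ``antique method'' covers Laaksonen's result --- is precisely that it applies the trace formula \emph{once}, with an even and admissible kernel, to produce the counting function $N(t)$, and only then Stieltjes-integrates $X^{it}$ against it; the parity obstruction never arises. You should delete the $\operatorname{sgn}$-trick and the equivalence claim, and rely solely on your first (contour) argument, which is correct.

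A smaller point: your account of where the $TX^{iT}$ main term is ``born'' (top and left edges via the functional equation) is serviceable but opaque; the cleaner statement, which is what the paper does, is that it comes from $\int_{1}^{T}X^{it}\,dN_{\mathrm{mean}}(t)=\tfrac{\vol(\Gamma\backslash\mathfrak{h}^{2})}{2\pi}\int_{1}^{T}X^{it}t\,dt+O(\log T)$ after one integration by parts. Likewise, the $O(T/\log T)$ error ultimately rests on Hejhal's bound $\mathcal{S}(T)=O(T/\log T)$ for $\PSL_{2}(\Z)$, not on PNT-style short averaging of the contour edges; your paragraph on error control gestures in the right direction but should be anchored to that bound rather than to an unspecified ``Prime Geodesic Theorem on average.''
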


The spectral exponential sum has the form $\sum e^{i\alpha t_{j}}$ in the work of Fujii; one takes $\alpha = \log X$ to undo it back to $S(T, X)$. An analogous method developed in his another article~\cite{Fujii1982} would also be useful for proving~\eqref{Fujii}. Laaksonen~\cite[Appendix]{PetridisRisager2017} has independently reached the~real part of~\eqref{Fujii} via the Selberg trace formula. His conclusions are derived by doubling the real part of ii) in~\cite[Theorem 1]{Fujii1984}. The real part (resp. imaginary part) of the spectral exponential sum $S(T, X)$ is termed the \textit{cosine kernel} (resp. \textit{sine kernel}). It behoves us to~mention that Theorem~\ref{Fujii-1984} reveals that the oscillations at peak points for each kernel are of different nature.

Now, the Selberg zeta function of $\Gamma \subset \PSL_{2}(\C)$ is built out of the Euler product as
\begin{equation}\label{Selberg-zeta-function}
Z(s) \coloneqq \prod_{\{T_{0} \}} \prod_{(k, \ell)} (1-a(T_{0})^{-2k} \overline{a(T_{0})}^{-2\ell} N(T_{0})^{-s}),
\end{equation}
where the outer product is taken over primitive hyperbolic and loxodromic conjugacy classes of $\Gamma$ and $(k, \ell)$ ranges over all the pairs of nonnegative integers satisfying $k \equiv \ell \tpmod{m(T_{0})}$, where $m(T)$ signifies the order of the torsion of the centraliser of $T$. Moreover, the complex numbers $a(T)$ and $a(T)^{-1}$ are the eigenvalues of $T$ with $|a(T)| > 1$ and $N(T) = |a(T)|^{2}$. The Selberg zeta function $Z(s)$ is known to be absolutely convergent in $\Re(s) > 2$ and extends to a meromorphic function on the whole complex plane $\C$ with a functional equation under $s \leftrightarrow 2-s$. Elstrodt--Grunewald--Mennicke~\cite[Lemma 4.2, p.208]{ElstrodtGrunewaldMennicke1998} have shown that
\begin{equation}\label{Elstrodt-Grunewald-Mennicke-1}
\frac{Z^{\prime}}{Z}(s) = \sum_{\{T \}} \widetilde{\Lambda_{\Gamma}}(T) N(T)^{-s}, \qquad 
\widetilde{\Lambda_{\Gamma}}(T) \coloneqq \frac{N(T) \log N(T_{0})}{m(T)|a(T)-a(T)^{-1}|^{2}},
\end{equation}
where the sum ranges over all hyperbolic and loxodromic conjugacy classes of $\Gamma$ and $T_{0}$ is~a primitive element associated to $T$. The denominator $m(T)|a(T)-a(T)^{-1}|^{2}$ in $\widetilde{\Lambda_{\Gamma}}(T)$ is not important since $\widetilde{\Lambda_{\Gamma}}(T) = \Lambda_{\Gamma}(T)+O(N(T)^{-1+\epsilon})$ by invoking that $m(T) \ne 1$ for finitely many classes. The Selberg zeta function for $\Gamma = \PSL_{2}(\mathcal{O})$ plays an important r\^{o}le in the proof~of Theorem~\ref{Laaksonen-analogue}. Given an analogue of the Riemann Hypothesis for $Z(s)$, we can arithmetically encode the spectral exponential sum. As far as the author knows, any asymptotic formula~for $S(T, X)$ in the spectral aspect in the three dimensional setting has yet to be investigated.

\begin{proof}[Proof of Theorem~\ref{Laaksonen-analogue}]
For notational convenience, we write $N_{\Gamma}(T)$ for $\Gamma = \PSL_{2}(\mathcal{O})$ as~$N(T)$. We invoke the Weyl law in~\cite[Theorem~5.1]{ElstrodtGrunewaldMennicke1998}, namely
\begin{equation}\label{Weyl's-law-in-a-precise-form}
N(T) = \frac{1}{4\pi} \int_{-T}^{T} \frac{\varphi^{\prime}}{\varphi}(1+it) dt
 + \mathcal{S}(T)+\frac{\vol(\mathcal{M})}{6\pi^{2}} T^{3}+b_{2} T \log T+b_{3} T+b_{4}+O \left(\frac{1}{T} \right)
\end{equation}
with explicitly computable constants $b_{j}$ for $j = 2, 3, 4$. The asymptotic~\eqref{Weyl's-law-in-a-precise-form} follows from~the discussion akin to Venkov~\cite{Venkov1979,Venkov1982}. We recall the definition of $M(T) = M_{\Gamma}(T)$, obtaining
\begin{align*}
S(T, X) &= \int_{1}^{T} X^{it} dN(t)\\
& = \frac{\vol(\mathcal{M})}{2\pi^{2}} \int_{1}^{T} X^{it} t^{2} dt
 - \int_{1}^{T} X^{it} dM(t)+\int_{1}^{T} X^{it} d\mathcal{S}(t)+O(\log T)\\
& = Y_{1}+Y_{2}+Y_{3}+O(\log T).
\end{align*}
We start with the analysis of the first term $Y_{1}$. By partial integration, we derive
\begin{align*}
Y_{1} &= \frac{\vol(\mathcal{M})}{2\pi^{2}} \frac{X^{iT}}{i \log X} T^{2}
 -\frac{\vol(\mathcal{M})}{2\pi i \log X} \int_{1}^{T} X^{it} t dt+O(1)\\
& = \frac{\vol(\mathcal{M})}{2\pi^{2} i \log X} X^{iT} T^{2}
 + \frac{\vol(\mathcal{M})}{2\pi(\log X)^{2}} X^{iT} T+O(1).
\end{align*}
The first term matches the main term in Theorem~\ref{Laaksonen-analogue}. As for the second term $Y_{2}$, we have to simplify the scattering determinant. To this end, we define the completed zeta function~by
\begin{equation*}
\xi_{K}(s) \coloneqq \left(\frac{\sqrt{|d_{K}|}}{2\pi} \right)^{s} \Gamma(s) \zeta_{K}(s)
\end{equation*}
with $\zeta_{K}(s)$ the Dedekind zeta function of $K = \Q(i)$. The functional equation is $\xi_{K}(s)$ reads
\begin{equation*}
\xi_{K}(s) = \xi_{K}(1-s).
\end{equation*}
The scattering determinant is given by
\begin{equation*}
\varphi(s) = \frac{\xi_{K}(s-1)}{\xi_{K}(s)} = \frac{2\pi}{\sqrt{|d_{K}|}} \frac{1}{s-1} \frac{\zeta_{K}(s-1)}{\zeta_{K}(s)}
\end{equation*}
so that
\begin{equation*}
-\frac{\varphi^{\prime}}{\varphi}(1+it) = \frac{\zeta_{K}^{\prime}}{\zeta_{K}}(1 \pm it)
 + \frac{\Gamma^{\prime}}{\Gamma}(1-it)+\frac{\Gamma^{\prime}}{\Gamma}(it)-2\log \pi+\frac{1}{it}.
\end{equation*}
We therefore use Stirling's formula to obtain
\begin{equation*}
M(t) = \frac{1}{2\pi i}(\log \zeta_{K}(1+it)-\log \zeta_{K}(1-it))+O(t \log t).
\end{equation*}
From the trivial bound for $\log \zeta_{K}(1 \pm it)$ and partial integration, it follows that
\begin{align*}
Y_{2} &= -\frac{1}{2\pi i} \int_{1}^{T} X^{it} d(\log \zeta_{K}(1+it)-\log \zeta_{K}(1-it))+O(\log T)\\
& = \frac{\log X}{2\pi} \int_{1}^{T} X^{it} \log \zeta_{K}(1+it) dt
 - \frac{\log X}{2\pi} \int_{1}^{T} X^{it} \log \zeta_{K}(1-it) dt+O(\log T).
\end{align*}
If we set $\xi = (\log X)^{-1}$, then the first term equals
\begin{align*}
&\frac{\log X}{2\pi i} \int_{1+i}^{1+iT} X^{t-1} \log \zeta_{K}(t) dt\\
& = \frac{\log X}{2\pi i} \left(\int_{1+\xi+i}^{1+\xi+iT}-\int_{1+iT}^{1+\xi+iT}+\int_{1+i}^{1+\xi+i} \right) 
X^{t-1} \log \zeta_{K}(t) dt\\
& = \frac{\log X}{2\pi i} \int_{1+\xi+i}^{1+\xi+iT} X^{t-1} \log \zeta_{K}(t) dt+O(\log T)\\
& = \frac{X^{\xi} \log X}{2\pi} \sum_{n = 2}^{\infty}
\frac{\Lambda_{K}(n)}{n^{1+\xi} \log n} \int_{1}^{T} \exp(it(\log X-\log n)) dt+O(\log T)\\
& = \frac{T}{2\pi X} \Lambda_{K}(X)+O(\log T).
\end{align*}
The second integral involving $\log \zeta_{K}(1-it)$ is bounded by $O(\log T)$.

To conclude the proof, it remains to handle $Y_{3}$. Integration by parts once again gives
\begin{equation*}
Y_{3} = X^{iT} \mathcal{S}(T)-i\log X \int_{1}^{T} X^{it} \mathcal{S}(t) dt+O(1)
 = X^{iT} \mathcal{S}(T)+Y_{4} \log X-iY_{5} \log X+O(1),
\end{equation*}
where
\begin{equation*}
Y_{4} = \int_{1}^{T} \sin(t \log X) \mathcal{S}(t) dt, \qquad Y_{5} = \int_{1}^{T} \cos(t \log X) \mathcal{S}(t) dt.
\end{equation*}
We now introduce the notation
\begin{equation*}
F(z) \coloneqq \log Z(z) \sin((1-z)i \log X),
\end{equation*}
where we take the principal value of the logarithm and the branch of $\log Z(z)$ is taken such that $\log Z(z)$ is real for $z > 1$. Imitating the treatment of $Y_{2}$, we consider the rectangle with vertices $2+\xi+i, 2+\xi+iT, 1+iT$ and $1+i$ with $\xi = (\log X)^{-1}$. It therefore follows that
\begin{equation*}
Y_{4} = \Im \left(\frac{1}{\pi i} \int_{1+i}^{1+iT} F(z) dz \right)\\
 = \Im \left(\frac{1}{\pi i} 
\left(\int_{2+\xi+i}^{2+\xi+iT}-\int_{1+iT}^{2+\xi+iT}+\int_{1+i}^{2+\xi+i} \right) F(z) dz \right).
\end{equation*}
Note that the third integral is trivially bounded. The first integral can be evaluated as
\begin{align*}
\frac{1}{\pi i} \int_{2+\xi+i}^{2+\xi+iT} F(z) dz &= \frac{1}{\pi} \int_{1}^{T} \log Z(2+\xi+it) \sin((t-(1+\xi)i) \log X) dt\\
& = -\frac{1}{2\pi i} \sum_{\{T \}} \frac{\widetilde{\Lambda_{\Gamma}}(T)}{N(T)^{2+\xi} \log N(T)} 
\int_{1}^{T} \exp(i(t-(1+\xi)i) \log X-it \log N(T)) dt\\
& + \frac{1}{2\pi i} \sum_{\{T \}} \frac{\widetilde{\Lambda_{\Gamma}}(T)}{N(T)^{2+\xi} \log N(T)} 
\int_{1}^{T} \exp(-i(t-(1+\xi)i) \log X-it \log N(T)) dt\\
& = -\frac{X^{1+\xi}}{2\pi i} \sum_{\{T \}} 
\frac{\widetilde{\Lambda_{\Gamma}}(T)}{N(T)^{2+\xi} \log N(T)} \int_{1}^{T} \exp(it(\log X-\log N(T))) dt+O(1)\\
& = -\frac{\widehat{\Lambda_{\Gamma}}(X)}{2\pi iX \log X} T+O(1),
\end{align*}
where we recall~\eqref{Elstrodt-Grunewald-Mennicke-1} and the definition of $\widehat{\Lambda_{\Gamma}}(X)$ in Theorem~\ref{Laaksonen-analogue}. This yields
\begin{equation*}
\Im \left(\frac{1}{\pi i} \int_{2+\xi+i}^{2+\xi+iT} F(z) dz \right)
 = \frac{\widehat{\Lambda_{\Gamma}}(X)}{2\pi X \log X} T+O(1).
\end{equation*}
The second integral can be bounded as
\begin{equation*}
-\frac{1}{\pi i} \int_{1+iT}^{2+\xi+iT} F(z) dz 
 = \frac{1}{\pi} \int_{1}^{2+\xi} \log Z(\sigma+iT) \sinh((\sigma-1+iT) \log X) d\sigma \ll G(T),
\end{equation*}
where $G(T)$ is the same as in the introduction. Here we use the fact that $X > 1$ is fixed.~In a similar fashion, we derive $Y_{5} \ll G(T)$. Notice that the order of $G(T)$ is always much bigger than $\log T$. Collecting the above estimates concludes the proof of Theorem~\ref{Laaksonen-analogue}.
\end{proof}



\subsection{Numerical Visualisations}\label{numerical-visualisations}
It is convenient to separate $N(T)$ into two parts as
\begin{equation*}
N(T) = N_{\mathrm{mean}}(T)+N_{\mathrm{fluct}}(T),
\end{equation*}
where $N_{\mathrm{mean}}(T)$ is a smooth function that describes a mean value of the number of $t_{j} \leq T$, and the fluctuating part $N_{\mathrm{fluct}}(T)$ is a function that oscillates around $0$:
\begin{equation}\label{fluct}
\lim_{T \to \infty} \frac{1}{T} \int_{0}^{T} N_{\mathrm{fluct}}(t) dt = 0.
\end{equation}
The Selberg trace formula yields the asymptotic formula (see Matthies~\cite{Matthies1995})
\begin{equation*}
N_{\mathrm{mean}}(T) = \frac{\vol(\mathcal{M})}{6\pi^{2}} T^{3}+a_{2} T \log T+a_{3} T+a_{4}+O \left(\frac{1}{T} \right)
\end{equation*}
with
\begin{align*}
a_{2} &= -\frac{3}{2\pi},\\
a_{3} &= \frac{1}{\pi} \left(\frac{13}{16} \log 2+\frac{7}{4} \log \pi
 - \log \Gamma \left(\frac{1}{4} \right)+\frac{2}{9} \log(2+\sqrt{3})+\frac{3}{2} \right),\\
a_{4} &= -\frac{1}{2}.
\end{align*}

\begin{figure}
	\centering
	\includegraphics[width=0.9\linewidth]{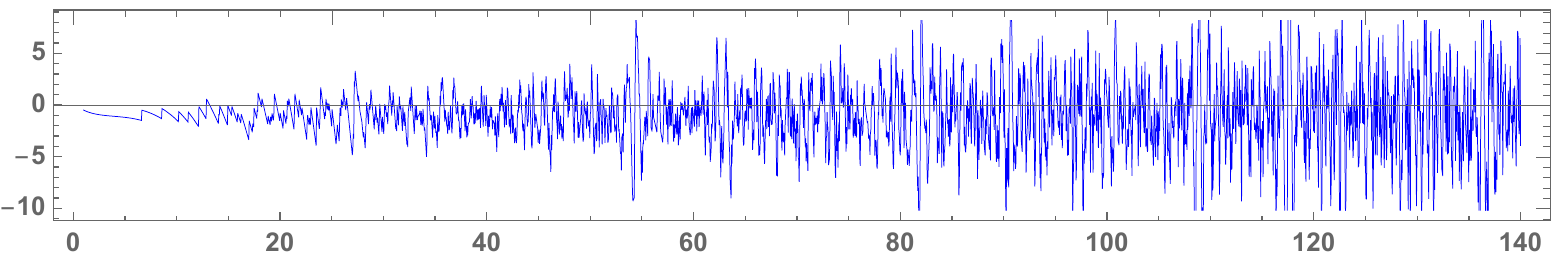}
	\caption{The Weyl remainder $N_{\mathrm{fluct}}(T)$ for $\Gamma = \PSL_{2}(\mathcal{O})$}
	\label{fluctuation-in-the-Weyl-remainder}
\end{figure}

\begin{figure}
	\centering
	\includegraphics[width=0.9\linewidth]{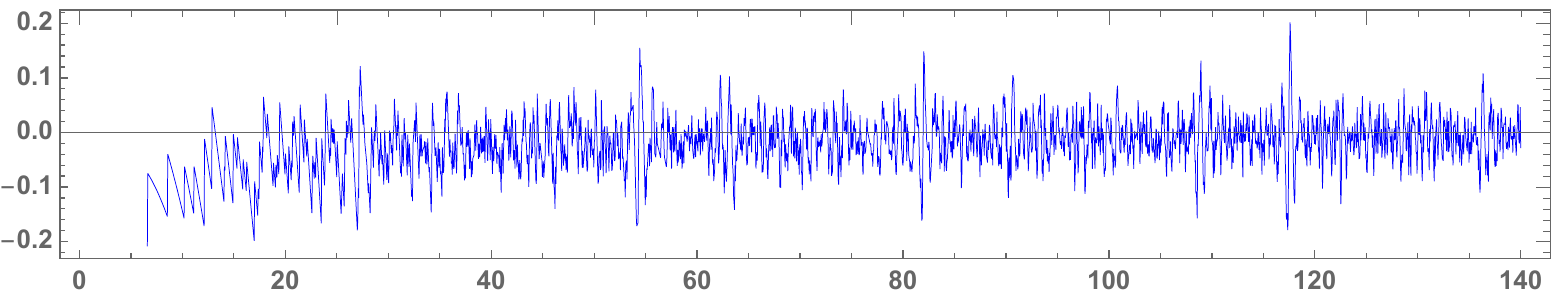}
	\caption{The scaled version $N_{\mathrm{fluct}}(T)/T$ of the Weyl remainder}
	\label{scaled-version}
\end{figure}

On the other hand, the fluctuation $N_{\mathrm{fluct}}(T)$ is visualised in Fig.~\ref{fluctuation-in-the-Weyl-remainder} whose computation~was executed via the utilisation of the first 13950 consecutive eigenvalues for the Picard manifold $\mathcal{M} = \Gamma \backslash \mathfrak{h}^{3}$ due to Then~\cite{AurichSteinerThen2012}. Taking account of Fig.~\ref{fluctuation-in-the-Weyl-remainder}, one sees that the Weyl remainder~grows with respect to $T$. We also plot the scaled version $N_{\mathrm{fluct}}(T)/T$ as shown in Fig.~\ref{scaled-version}. We easily observe that such a scaled Weyl remainder is bounded in the range $T \leq 140$ in our calculation; however we cannot even decide that the Weyl remainder in Fig.~\ref{scaled-version} decreases in $T$.~Therefore, one numerically confirms that
\begin{equation*}
\lim_{t \to \infty} |N_{\mathrm{fluct}}(t)| t^{-\sigma} = 0.
\end{equation*}
for any fixed $\sigma > 1$. Since the functions $N_{\mathrm{fluct}}(T)$ and $\mathcal{S}(T)$ are bounded by the same~quantity, we expect that $N_{\mathrm{fluct}}(T) = o(T)$ so that the lower order terms in Theorem~\ref{Laaksonen-analogue} do not absorb the main terms. Similarly to Hejhal's deduction in~\cite[Theorem 2.29]{Hejhal1983}, it follows that
\begin{equation}\label{Hejhal-analogue}
\mathcal{S}(T) = O \left(\frac{T^{2}}{\log T} \right).
\end{equation}
The Selberg zeta function has a higher density of zeroes, which in turn makes $\mathcal{S}(T)$ bigger. At the moment, the progress on the evaluation of $\mathcal{S}(T)$ remains elusive, and the bound~\eqref{Hejhal-analogue} is all we know to date. Note that the conjectural bound is $\mathcal{S}(T) = o(T)$. On the other hand, the analysis of $\int_{0}^{T} N_{\mathrm{fluct}}(t) dt$ was done for the modular surface by Booker--Platt~\cite{BookerPlatt2019} by means of Turing's method. Their reasoning would be applicable for proving the corresponding result in three dimensions. We now state the following immediate corollary of Theorem~\ref{Laaksonen-analogue}.
\begin{corollary}\label{cos-sin}
For fixed $X \geq 1$, we have that
\begin{align*}
\sum_{t_{j} \leq T} \cos(t_{j} \log X)
& = \frac{\vol(\mathcal{M})}{2\pi^{2}} \frac{\sin(T \log X)}{\log X} T^{2}+O \left(\frac{T^{2}}{\log T} \right),\\
\sum_{t_{j} \leq T} \sin(t_{j} \log X)
& = -\frac{\vol(\mathcal{M})}{2\pi^{2}} \frac{\cos(T \log X)}{\log X} T^{2}+O \left(\frac{T^{2}}{\log T} \right)
\end{align*}
as $T \to \infty$.
\end{corollary}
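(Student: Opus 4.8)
The plan is to read Corollary~\ref{cos-sin} off the asymptotic formula~\eqref{Fujii-analogue} of Theorem~\ref{Laaksonen-analogue} by splitting $S(T,X)$ into its real and imaginary parts. Under our convention $t_{j}>0$ we have
\begin{equation*}
S(T,X)=\sum_{t_{j}\leqslant T}X^{it_{j}}=\sum_{t_{j}\leqslant T}\cos(t_{j}\log X)+i\sum_{t_{j}\leqslant T}\sin(t_{j}\log X),
\end{equation*}
so the cosine and sine kernels are precisely $\Re S(T,X)$ and $\Im S(T,X)$. The first step is therefore to compute the real and imaginary parts of the leading term of~\eqref{Fujii-analogue}. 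Since $1/i=-i$ and $X^{iT}=\cos(T\log X)+i\sin(T\log X)$, one has
\begin{equation*}
\frac{\vol(\mathcal{M})}{2\pi^{2}i\log X}X^{iT}T^{2}=\frac{\vol(\mathcal{M})}{2\pi^{2}}\frac{T^{2}}{\log X}\bigl(\sin(T\log X)-i\cos(T\log X)\bigr),
\end{equation*}
and its real and imaginary parts are exactly the main terms claimed for the cosine and sine kernels, respectively.

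Next I would check that every other term in~\eqref{Fujii-analogue} contributes only $O(T^{2}/\log T)$ once $X>1$ is fixed. The terms $\frac{\vol(\mathcal{M})}{2\pi(\log X)^{2}}X^{iT}T$ and $\frac{T}{2\pi X}\bigl(\widehat{\Lambda_{\Gamma}}(X)+\Lambda_{K}(X)\bigr)$ are $O_{X}(T)=O(T^{2}/\log T)$; the term $X^{iT}\mathcal{S}(T)$ is $O(T^{2}/\log T)$ by the Hejhal-type bound~\eqref{Hejhal-analogue}. The remaining term is $O(G(T))$ with $G(T)=\int_{1}^{2+\xi}\log|Z(\sigma+iT)|\,d\sigma$, and here I would establish $G(T)\ll T^{2}/\log T$ by the same circle of ideas behind~\eqref{Hejhal-analogue}: a Borel--Carath\'{e}odory / Jensen estimate for $\log Z$ on discs of bounded radius reaching from the region of absolute convergence $\Re s>2$ into the strip $1\leqslant\Re s\leqslant 2+\xi$, together with the count of spectral parameters in unit-height windows furnished by Theorem~\ref{Bonthonneau} and~\eqref{unit-intervals}. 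Inserting all of this into~\eqref{Fujii-analogue} gives
\begin{equation*}
S(T,X)=\frac{\vol(\mathcal{M})}{2\pi^{2}}\frac{T^{2}}{\log X}\bigl(\sin(T\log X)-i\cos(T\log X)\bigr)+O\!\left(\frac{T^{2}}{\log T}\right),
\end{equation*}
and separating real and imaginary parts yields the two stated asymptotics.

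The only step that is not pure bookkeeping --- and hence the main obstacle --- is the uniform bound $G(T)\ll T^{2}/\log T$. This is the exact analogue for $\log|Z|$ of the estimate~\eqref{Hejhal-analogue} for $\arg Z=\pi\mathcal{S}(T)$, the delicate point being the same $1/\log T$ gain over the trivial $O(T^{2})$ bound that comes directly from the Weyl law; in particular one must control the $\sigma$-average of $\log|Z(\sigma+iT)|$ from below near possible zeros on the line $\Re s=1$, which is handled exactly as in the derivation of~\eqref{Hejhal-analogue}. Everything else reduces to elementary trigonometric identities together with the harmlessness of $X$-dependent constants for fixed $X$.
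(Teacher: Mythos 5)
Your proof is correct and takes essentially the same route as the paper: take real and imaginary parts of~\eqref{Fujii-analogue}, identify the main terms from $\frac{\vol(\mathcal{M})}{2\pi^{2} i \log X} X^{iT} T^{2} = \frac{\vol(\mathcal{M}) T^{2}}{2\pi^{2}\log X}\bigl(\sin(T\log X)-i\cos(T\log X)\bigr)$, and absorb everything else into $O(T^{2}/\log T)$ using~\eqref{Hejhal-analogue} for the $\mathcal{S}(T)$ term and $O_{X}(T)$ for the remaining explicit terms. The paper's one-sentence proof invokes only~\eqref{Hejhal-analogue} and does not explicitly comment on the $O(G(T))$ term; you correctly flag that the corollary also needs $G(T)\ll T^{2}/\log T$, and you are right that this follows from the same Borel--Carath\'{e}odory/Jensen estimates that underlie~\eqref{Hejhal-analogue}. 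So your write-up is slightly more careful than the paper on that point, but it is the same argument.
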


We visualise the real and imaginary parts of $S(T, X)$ in terms of $T$ and $X$. To this end,~let
\begin{equation*}
R(T, X) \coloneqq 2\sum_{t_{j} \leq T} \cos(t_{j} \log X), \qquad Q(T, X) \coloneqq 2\sum_{t_{j} \leq T} \sin(t_{j} \log X).
\end{equation*}
We remark that $R(T, X)$ can be expressed as twice the real part of $S(T, X)$. By Corollary~\ref{cos-sin}, we rescale
\begin{align*}
\Pi_{1}(T, X) &\coloneqq R(T, X)-\frac{\vol(\mathcal{M})}{\pi^{2}} \frac{\sin(T \log X)}{\log X} T^{2},\\
\Pi_{2}(T, X) &\coloneqq Q(T, X)+\frac{\vol(\mathcal{M})}{\pi^{2}} \frac{\cos(T \log X)}{\log X} T^{2}.
\end{align*}
The programs for our plots adapt the 13950 consecutive Laplacian eigenvalues associated to $\Gamma = \PSL_{2}(\mathcal{O})$\footnote{Laaksonen~\cite[Appendix]{PetridisRisager2017} clarifies that computations of spectral exponential sums are robust. In other words, the number of eigenvalues and their precision have no significant impact in two dimensions.}. The corresponding spectral parameters satisfy $t_{j} \leq 140$. For $\Gamma = \PSL_{2}(\Z)$, we could apply the 53000 consecutive eigenvalues, while in our case the number of eigenvalues hitherto computed is much less.

We start with considering the order of magnitude of $\Pi_{j}(T, X) \ (j = 1, 2)$ in terms of $T$. In Fig.~\ref{two-different-normalisations}, we plot two different normalisations of $\Pi_{j}(T, X)$, where we fix $X = 46.97$ as $T$~tends to infinity (the value 46.97 is one of the norms of primitive hyperbolic and loxodromic elements). Figure~\ref{two-different-normalisations} demonstrates that $\Pi_{j}(T, X)$ would have the order of magnitude of $T$.
\begin{figure}
	\centering
	\begin{subfigure}{0.49\columnwidth}
		\centering
		\includegraphics[width=0.99\linewidth]{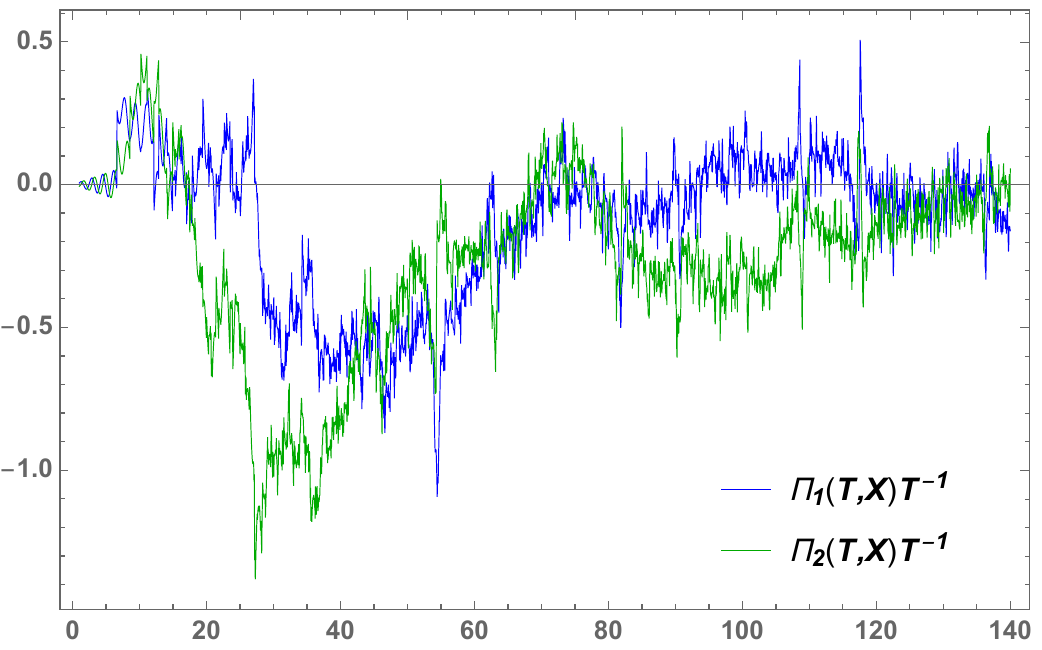}
		\caption{$\Pi_{1}(T, X) T^{-1}$ and $\Pi_{2}(T, X) T^{-1}$}
		\label{A}
	\end{subfigure}
	\begin{subfigure}{0.49\columnwidth}
		\centering
		\includegraphics[width=0.99\linewidth]{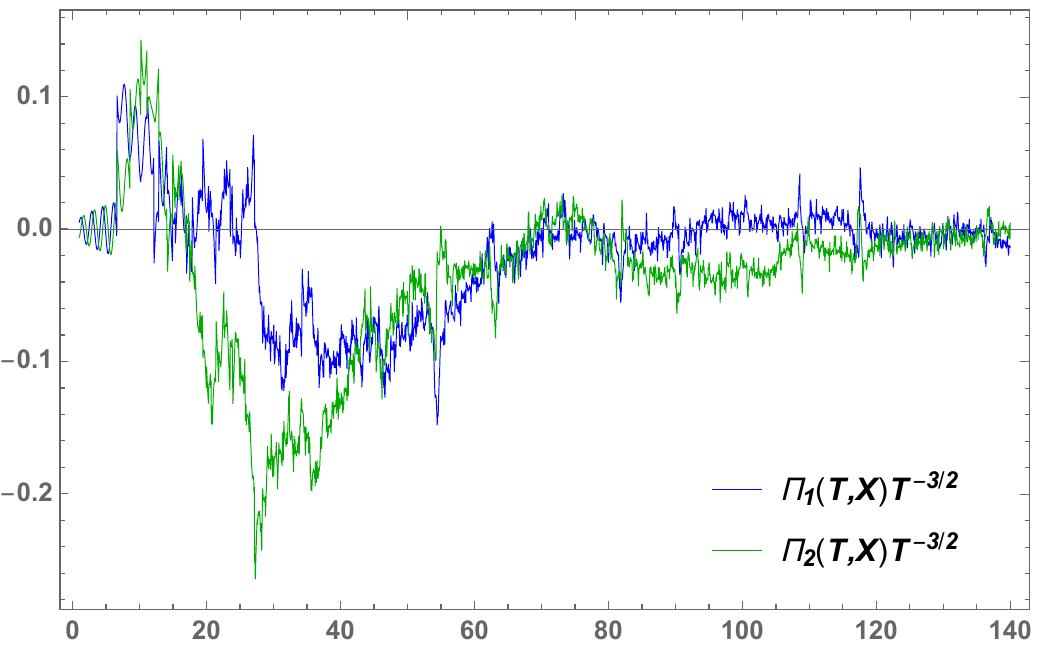}
		\caption{$\Pi_{1}(T, X) T^{-3/2}$ and $\Pi_{2}(T, X) T^{-3/2}$}
		\label{B}
	\end{subfigure}
\caption{Two different normalisations at $X = 46.97$}
\label{two-different-normalisations}
\end{figure}


Next we handle $\Sigma_{1}(T, X) = R(T, X) T^{-2}$ and $\Sigma_{2}(T, X) = Q(T, X) T^{-2}$. We plot these~sums in Fig.~\ref{amplitude} in the range $X \in [3, 20]$ with $T = 140$.
\begin{figure}
	\centering
	\includegraphics[width=0.8\linewidth]{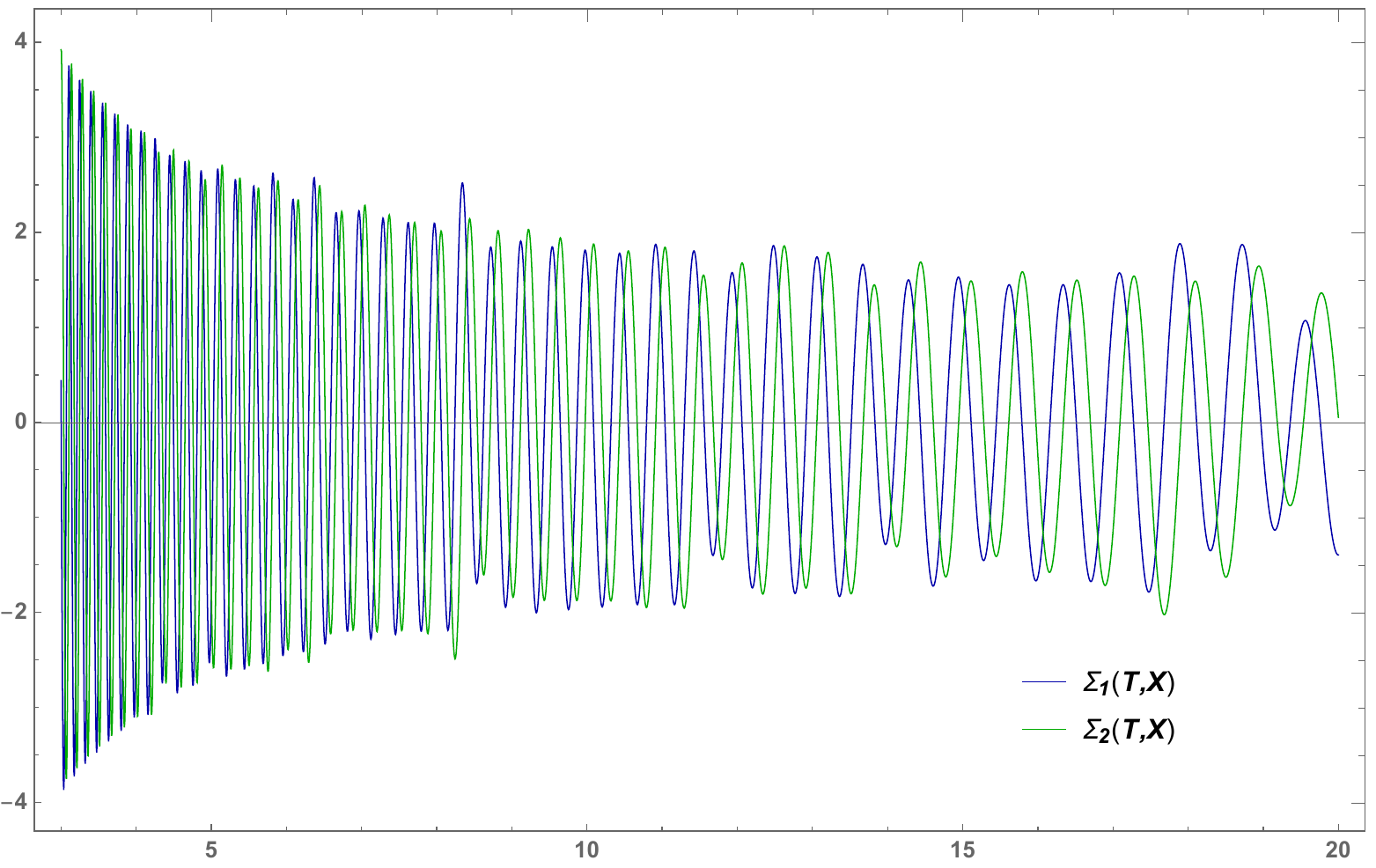}
	\caption{The normalised sums in terms of $X \in [3, 20]$ and $T = 140$}
	\label{amplitude}
\end{figure}
The oscillations in $R(T, X)$ and $Q(T, X)$ is much more conspicuous than in the case of the Riemann zeta function and this phenomenon leads to the belief that the main term in each sum should involve an oscillatory component. By Theorem~\ref{Laaksonen-analogue}, we know that such a component is expressed by the sine or cosine. We also observe that the oscillations in $\Sigma_{1}(T, X)$ and $\Sigma_{2}(T, X)$ are slightly out of synchronisation.
\begin{figure}
	\centering
	\includegraphics[width=0.8\linewidth]{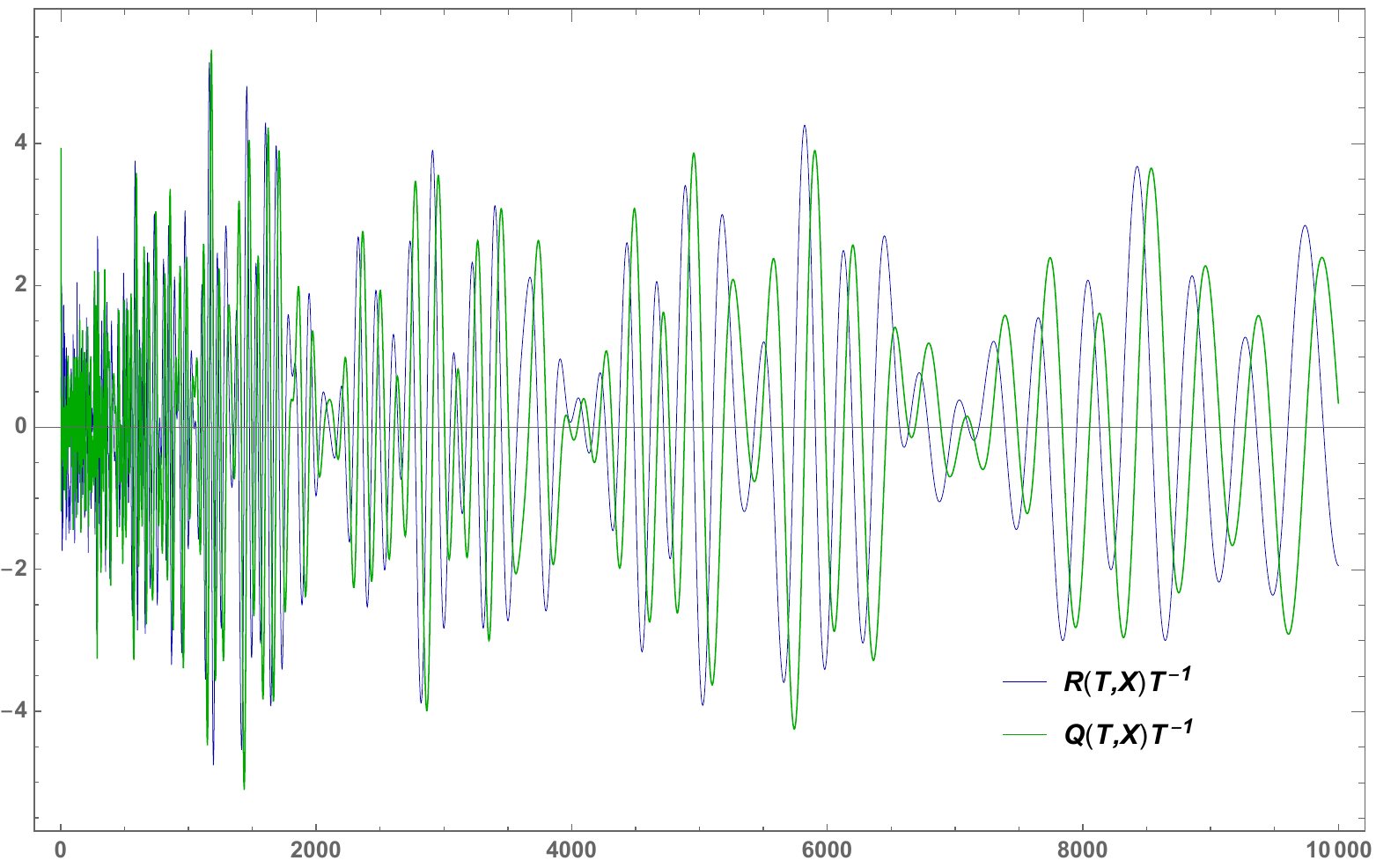}
	\caption{The normalised sums in terms of $X \in [3, 10000]$ and $T = 140$}
	\label{spectral-exponential-sums-for-the-Picard-group}
\end{figure}
Figure~\ref{spectral-exponential-sums-for-the-Picard-group} plots $R(T, X) T^{-1}$ and $Q(T, X) T^{-1}$ in the $X$-aspect with $T = 140$. Figure~\ref{spectral-exponential-sums-for-the-Picard-group} suggests that the bound $S(T, X) = O_{\epsilon}(T^{1+\epsilon} X^{\epsilon})$ may possibly hold, although we cannot dispose~of~$X^{\epsilon}$.

\section*{Acknowledgements}\label{acknowledgements}
The author wishes to thank Peter Sarnak for his valuable suggestions and Holger Then~for his list of the first 13950 eigenvalues for $\PSL_{2}(\mathcal{O})$. He also thanks Akio Fujii for teaching~his proof of results in~\cite{Fujii1984}, and Giacomo Cherubini, Dimitrios Chatzakos, and Niko Laaksonen for discussions. The author expresses his gratitude to Eren Mehmet K{\i}ral, Shin-ya Koyama, Maki Nakasuji, Hiroyuki Ochiai, Mikhail Smotrov, and Masao Tsuzuki for their feedback on earlier drafts. Special thanks are owed to the anonymous referees for their thorough review that helped improve the readability and rigor of the article.


\end{document}